\numberwithin{equation}{section}
\numberwithin{figure}{section}
\theoremstyle{plain}
\newtheorem{thm}{\protect\theoremname}[section]
  \theoremstyle{definition}
  \newtheorem{defn}[thm]{\protect\definitionname}
  \theoremstyle{definition}
  \newtheorem{example}[thm]{\protect\examplename}
  \theoremstyle{plain}
  \newtheorem{prop}[thm]{\protect\propositionname}
  \theoremstyle{plain}
  \newtheorem{lem}[thm]{\protect\lemmaname}
  \theoremstyle{plain}
  \newtheorem{cor}[thm]{\protect\corollaryname}
  \theoremstyle{remark}
  \newtheorem*{acknowledgement*}{\protect\acknowledgementname}
  \providecommand{\acknowledgementname}{Acknowledgement}
  \providecommand{\corollaryname}{Corollary}
  \providecommand{\definitionname}{Definition}
  \providecommand{\examplename}{Example}
  \providecommand{\lemmaname}{Lemma}
  \providecommand{\propositionname}{Proposition}
\providecommand{\theoremname}{Theorem}
\begin{document}

\title{The structure of an isometric tuple}

\author{Matthew Kennedy}

\address{School of Mathematics and Statistics\\
Carleton University\\
1125 Colonel By Drive\\
Ottawa, Ontario K1S 5B6\\
Canada}

\email{mkennedy@math.carleton.ca}
\begin{abstract}
An $n$-tuple of operators $(V_{1},\ldots,V_{n})$ acting on a Hilbert
space $H$ is said to be isometric if the operator $[V_{1}\ \cdots\ V_{n}]:H^{n}\to H$
is an isometry. We prove a decomposition for an isometric tuple of
operators that generalizes the classical Lebesgue-von Neumann-Wold
decomposition of an isometry into the direct sum of a unilateral shift,
an absolutely continuous unitary and a singular unitary. We show that,
as in the classical case, this decomposition determines the weakly
closed algebra and the von Neumann algebra generated by the tuple.
\end{abstract}

\subjclass[2000]{Primary 47A13; Secondary 47L55, 46L10.}

\thanks{Research partially supported by an NSERC Canada Graduate Scholarship.}

\maketitle

\section{Introduction}

This paper concerns the structure of an isometric tuple of operators,
an object that appears frequently in mathematics and mathematical
physics. From the perspective of an operator theorist, the notion
of an isometric tuple is a natural higher-dimensional generalization
of the notion of an isometry.

An $n$-tuple of operators $(V_{1},\ldots,V_{n})$ acting on a Hilbert
space $H$ is said to be \textbf{isometric} if the row operator $[V_{1}\ \cdots\ V_{n}]:H^{n}\to H$
is an isometry. This is equivalent to requiring that the operators
$V_{1},\ldots,V_{n}$ satisfy the algebraic relations 
\[
V_{i}^{*}V_{j}=\begin{cases}
I & \mbox{if }i=j,\\
0 & \mbox{if }i\ne j.
\end{cases}
\]
These relations are often referred to as the Cuntz relations.

The main result in this paper is a decomposition of an isometric tuple
that generalizes the classical Lebesgue-von Neumann-Wold decomposition
of an isometry into the direct sum of a unilateral shift, an absolutely
continuous unitary and a singular unitary. We show that, as in the
classical case, this decomposition determines the structure of the
weakly closed algebra and the von Neumann algebra generated by the
tuple.

The existence of a higher-dimensional Lebesgue-von Neumann-Wold decomposition
was conjectured by Davidson, Li and Pitts in \cite{DLP05}. They observed
that the measure-theoretic definition of an absolutely continuous
operator was equivalent to an operator-theoretic property of the functional
calculus for that operator. Since this property naturally extends
to the higher-dimensional setting, this allowed them to define the
notion an absolutely continuous isometric tuple.

The key technical result in this paper is a more effective operator-algebraic
characterization of an absolutely continuous isometric tuple. The
lack of such a characterization had been identified as the biggest
obstruction to establishing the conjecture in \cite{DLP05} (see also
\cite{DY08}). As we will see, the difficulty here can be attributed
to the lack of a higher-dimensional analogue of the spectral theorem.

In this paper, we overcome this difficulty by extending ideas from
the commutative theory of dual algebras to the noncommutative setting.
A similar approach was used in \cite{Ken11} to prove that certain
isometric tuples are hyperreflexive. In the present paper, the assumptions
on the isometric tuples we consider are much weaker, and the problem
is substantially more difficult. The idea to use this approach was
inspired by results of Bercovici in \cite{Ber98}.

In Section \ref{sec:motivation}, we review the Lebesgue-von Neumann-Wold
decomposition of a single isometry that is the motivation for our
results. In Section \ref{sec:background-preliminaries}, we provide
a brief review of the requisite background material on higher-dimensional
operator theory, and we introduce the notions of absolute continuity
and singularity. In Section \ref{sec:abs-cont}, we prove an operator-algebraic
characterization of an absolutely continuous isometric tuple. In Section
\ref{sec:sing-isom-tuple}, we prove an operator-algebraic characterization
of a singular isometric tuple. In Section \ref{sec:lebesgue-wold-decomp},
we prove the Lebesgue-von Neumann-Wold decomposition of an isometric
tuple, and we obtain some consequences of this result.

The present exposition was inspired by the perspective of Muhly and
Solel in \cite{MS10}, which appeared shortly after the first version
of this paper. They consider the notion of absolute continuity in
a  more general setting.

\section{Motivation\label{sec:motivation}}

The structure of a single isometry $V$ is well understood. By the
Wold decomposition of an isometry, $V$ can be decomposed as
\[
V=V_{u}\oplus U,
\]
where $V_{u}$ is a unilateral shift of some multiplicity, and $U$
is a unitary. By the Lebesgue decomposition of a measure applied to
the spectral measure of $U$, we can decompose $U$ as
\[
U=V_{a}\oplus V_{s},
\]
where $V_{a}$ is an absolutely continuous unitary and $V_{s}$ is
a singular unitary, in the sense that their spectral measures are
absolutely continuous and singular respectively with respect to Lebesgue
measure. This allows us to further decompose $V$ as
\[
V=V_{u}\oplus V_{a}\oplus V_{s}.
\]
We will refer to this as the \textbf{Lebesgue-von Neumann-Wold decomposition}
of an isometry.

It will be convenient to consider the above notions of absolute continuity
and singularity from a different perspective. Let $A(\mathbb{D})$
denote the classical disk algebra of analytic functions on the complex
unit disk $\mathbb{D}$ with continuous extension to the boundary.
An isometry $V$ induces a contractive representation of $A(\mathbb{D})$,
namely the $A(\mathbb{D})$ functional calculus for $V$, given by
\[
f\to f(V),\quad f\in A(\mathbb{D}).
\]
Recall that the algebra $A(\mathbb{D})$ is a weak-{*} dense subalgebra
of the algebra $H^{\infty}$ of bounded analytic functions on the
complex unit disk. In certain cases, the representation of $A(\mathbb{D})$
induced by $V$ is actually the restriction to $A(\mathbb{D})$ of
a weak-{*} continuous representation of $H^{\infty}$, namely the
$H^{\infty}$ functional calculus for $V$, given by
\[
f\to f(V),\quad f\in H^{\infty}.
\]
It follows from Theorem III.2.1 and Theorem III.2.3 of \cite{SF70}
that this occurs if and only if $V_{s}=0$ in the Lebesgue-von Neumann-Wold
decomposition of $V$. This motivates the following definitions.
\begin{defn}
\label{def:class-abs-cont}Let $V$ be an isometry. We will say that
$V$ is \textbf{absolutely continuous} if the representation of $A(\mathbb{D})$
induced by $V$ extends to a weak\nobreakdash-{*} continuous representation
of $H^{\infty}$. If $V$ has no absolutely continuous restriction
to an invariant subspace, then we will say that $V$ is \textbf{singular}. 
\end{defn}
The importance of the Lebesgue-von Neumann-Wold decomposition of an
isometry $V$ is that it determines the structure of the weakly closed
algebra $\mathrm{W}(V)$ and the von Neumann algebra $\mathrm{W}^{*}(V)$
generated by $V$. Recall that $\mathrm{W}(V)$ is the weak closure
of the polynomials in $V$, and $\mathrm{W}^{*}(V)$ is the weak closure
of the polynomials in $V$ and $V^{*}$.

Let $\alpha$ denote the multiplicity of $V_{u}$ as a unilateral
shift, and let $\mu_{a}$ and $\mu_{s}$ be scalar measures equivalent
to the spectral measures of $V_{a}$ and $V_{s}$ respectively. Since
a unilateral shift of multiplicity one is irreducible, $\mathrm{W}^{*}(V)$
is given by
\[
\mathrm{\mathrm{W}}^{*}(V)\simeq B(\ell^{2})^{\alpha}\oplus L^{\infty}(V_{a})\oplus L^{\infty}(\mu_{s})(V_{s}).
\]
It was established by Wermer in \cite{Wer52} that $\mathrm{W}(V)$
can be self-adjoint, depending on $\alpha$ and $\mu_{a}$. If $\alpha\ne0$
or if Lebesgue measure is absolutely continuous with respect to $\mu_{a}$,
then $\mathrm{W}(V)$ is given by
\[
\mathrm{W}(V)\simeq H^{\infty}(V_{u}\oplus V_{a})\oplus L^{\infty}(\mu_{s})(V_{s}).
\]
Otherwise, if neither of these conditions holds, then $\mathrm{W}(V)=\mathrm{W}^{*}(V)$.

The following example shows that it is possible for the weakly closed
algebra generated by an absolutely continuous isometry to be self-adjoint.
We will see later that there is no higher-dimensional analogue of
this phenomenon.
\begin{example}
\label{exa:reductive-abs-cont-uni}

Let $U$ denote the operator of multiplication by the coordinate function
on $L^{2}(\mathbb{T},m)$, where $m$ denotes Lebesgue measure. Let
$m_{1}$ and $m_{2}$ denote Lebesgue measure on the upper and lower
half of the unit circle respectively, and let $U_{1}$and $U_{2}$
denote the operator of multiplication by the coordinate function on
$L^{2}(\mathbb{T},m_{1})$ and $L^{2}(\mathbb{T},m_{2})$ respectively.

Since the spectral measure of $U\simeq U_{1}\oplus U_{2}$ is equivalent
to Lebesgue measure, $U$ is absolutely continuous. Thus $U_{1}$
and $U_{2}$ are also absolutely continuous. From above,
\[
\mathrm{W}^{*}(U)\simeq L^{\infty}(U),\qquad\mathrm{W}(U)\simeq H^{\infty}(U).
\]
However, since Lebesgue measure is not absolutely continuous with
respect to $m_{1}$ or $m_{2}$, 
\[
\mathrm{W}(U_{i})=\mathrm{W}^{*}(U_{i})=L^{\infty}(U_{i}),\quad i=1,2.
\]
In particular, the weakly closed algebras $\mathrm{W}(U_{1})$ and
$\mathrm{W}(U_{2})$ generated by $U_{1}$ and $U_{2}$ respectively
are self-adjoint.
\end{example}

\section{Background and preliminaries\label{sec:background-preliminaries}}

\subsection{\label{sub:noncomm-funct-alg}The noncommutative function algebras}

The noncommutative Hardy space $F_{n}^{2}$ is defined to be the full
Fock-Hilbert space over $\mathbb{C}^{n}$, i.e.
\[
F_{n}^{2}=\oplus_{k=0}^{\infty}(\mathbb{C}^{n})^{\otimes k},
\]
where we will write $\xi_{\varnothing}$ to denote the vacuum vector,
so that $(\mathbb{C}^{n})^{\otimes0}=\mathbb{C}\xi_{\varnothing}$.
Let $\xi_{1},\ldots,\xi_{n}$ be an orthonormal basis of $\mathbb{C}^{n}$
and let $\mathbb{F}_{n}^{*}$ denote the unital free semigroup on
$n$ generators $\{1,\ldots,n\}$ with unit $\varnothing$. For a
word $w=w_{1}\cdots w_{k}$ in $\mathbb{F}_{n}^{*}$, it will be convenient
to write $\xi_{w}=\xi_{w_{1}}\otimes\cdots\otimes\xi_{w_{k}}$. We
can identify $F_{n}^{2}$ with the set of power series in $n$ noncommuting
variables $\xi_{1},\ldots,\xi_{n}$ with square-summable coefficients,
i.e.
\[
F_{n}^{2}=\left\{ \sum_{w\in\mathbb{F}_{n}^{*}}a_{w}\xi_{w}:\sum_{w\in\mathbb{F}_{n}^{*}}|a_{w}|^{2}<\infty\right\} .
\]
In particular, we can identify the noncommutative Hardy space $F_{1}^{2}$
with the classical Hardy space $H^{2}$ of analytic functions having
power series expansions with square-summable coefficients.

The left multiplication operators $L_{1},\ldots,L_{n}$ are defined
on $F_{n}^{2}$ by
\[
L_{i}\xi_{w}=\xi_{i}\otimes\xi_{w}=\xi_{iw},\quad w\in\mathbb{F}_{n}^{*}.
\]
It is clear that the $n$-tuple $L=(L_{1},\ldots,L_{n})$ is isometric.
We will call it the \textbf{unilateral $n$-shift} since, for $n=1$,
$L_{1}$ can be identified with the unilateral shift on $H^{2}$.
For a word $w=w_{1}\cdots w_{k}$ in $\mathbb{F}_{n}^{*}$, it will
be convenient to write $L_{w}=L_{w_{1}}\cdots L_{w_{k}}$.

The \textbf{noncommutative disk algebra} $\mathcal{A}_{n}$ is the
norm closed unital algebra generated by $L_{1},\ldots,L_{n}$ and
the \textbf{noncommutative analytic Toeplitz algebra} $\mathcal{L}_{n}$
is the weakly closed unital algebra generated by $L_{1},\ldots,L_{n}$.
These algebras were introduced by Popescu in \cite{Pop96}, and have
subsequently been studied by a number of authors (see for example
\cite{DP98} and \cite{DP99}).

The noncommutative disk algebra $\mathcal{A}_{n}$ and the noncommutative
analytic Toeplitz algebra $\mathcal{L}_{n}$ are higher-dimensional
analogues of the classical disk algebra $A(\mathbb{D})$ and the classical
algebra $H^{\infty}$ of bounded analytic functions. In particular,
the algebra $\mathcal{A}_{n}$ is a proper weak-{*} dense subalgebra
of the algebra $\mathcal{L}_{n}$. If we agree to identify functions
in $H^{\infty}$ with the corresponding multiplication operators on
$H^{2}$, then we can identify $A(\mathbb{D})$ with $\mathcal{A}_{1}$
and $H^{\infty}$ with $\mathcal{L}_{1}$.

As in the classical case, an element $A$ in $\mathcal{L}_{n}$ is
uniquely determined by its Fourier series
\[
A\sim\sum_{w\in\mathbb{F}_{n}^{*}}a_{w}L_{w},
\]
where $a_{w}=(A\xi_{\varnothing},\xi_{w})$ for $w$ in $\mathbb{F}_{n}^{*}$.
The Cesaro sums of this series converge strongly to $A$, and it is
often useful heuristically to work directly with this representation.

We will also need to work with the right multiplication operators
$R_{1},\ldots,R_{n}$ defined on $F_{n}^{2}$ by
\[
R_{i}\xi_{w}=\xi_{w}\otimes\xi_{i}=\xi_{wi},\quad w\in\mathbb{F}_{n}^{*}.
\]
The $n$-tuple $R=(R_{1},\ldots,R_{n})$ is unitarily equivalent to
$L=(L_{1},\ldots,L_{n})$. The unitary equivalence is implemented
by the ``unitary flip'' on $F_{n}^{2}$ that, for a word $w_{1}\cdots w_{k}$
in $\mathbb{F}_{n}^{*}$, takes $\xi_{w_{1}\cdots w_{k}}$ to $\xi_{w_{k}\cdots w_{1}}$.
We will let $\mathcal{R}_{n}$ denote the weakly closed algebra generated
by $R_{1},\ldots,R_{n}$.

\subsection{Free semigroup algebras}

Let $V=(V_{1},\ldots,V_{n})$ be an isometric $n$-tuple. The weakly
closed unital algebra $\mathrm{W}(V)$ generated by $V_{1},\ldots,V_{n}$
is called the \textbf{free semigroup algebra} generated by $V$. As
in Section \ref{sub:noncomm-funct-alg}, for a word $w=w_{1}\cdots w_{k}$
in the unital free semigroup $\mathbb{F}_{n}^{*}$, it will be convenient
to write $V_{w}=V_{w_{1}}\cdots V_{w_{k}}$.
\begin{example}
The noncommutative analytic Toeplitz algebra $\mathcal{L}_{n}$ introduced
in Section \ref{sub:noncomm-funct-alg} is a fundamental example of
a free semigroup algebra. We will see that it plays an important role
in the general theory of free semigroup algebras.
\end{example}
The study of free semigroup algebras was initiated by Davidson and
Pitts in \cite{DP99}. They observed that information about the unitary
invariants of an isometric tuple can be detected in the algebraic
structure of the free semigroup algebra it generates, and used this
fact to classify a large family of representations of the Cuntz algebra.
Free semigroup algebras have subsequently received a great deal of
interest (see for example \cite{Dav01}).

It was shown in \cite{DP98} that $\mathcal{L}_{n}$ has a great deal
of structure that is analogous to the analytic structure of $H^{\infty}$.
This motivates the following definition.
\begin{defn}
\label{def:analytic-isom-tuple}An isometric $n$-tuple $V=(V_{1},\ldots,V_{n})$
is said to be \textbf{analytic} if the free semigroup algebra generated
by $V$ is isomorphic to the noncommutative analytic Toeplitz algebra
$\mathcal{L}_{n}$.
\end{defn}

The notion of analyticity is of central importance in the theory of
free semigroup algebras. This is apparent from the work of Davidson,
Katsoulis and Pitts in \cite{DKP01}. They proved the following general
structure theorem.
\begin{thm}
[Structure theorem for free semigroup algebras]\label{thm:struct-thm-fsa}Let
$\mathcal{V}=\mathrm{W}(V)$ be a free semigroup algebra. Then there
is a projection $P$ in $\mathcal{V}$ with range invariant under
$\mathcal{V}$ such that
\begin{enumerate}
\item if $P\ne0,$ then the restriction of $\mathcal{V}$ to the range of
$P$ is an analytic free semigroup algebra,
\item the compression of $\mathcal{V}$ to the range of $P^{\perp}$ is
a von Neumann algebra,
\item $\mathcal{V}=P\mathcal{V}P+(\mathrm{W}^{*}(V))P^{\perp}$.
\end{enumerate}
\end{thm}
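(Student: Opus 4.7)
\smallskip
\noindent\textbf{Proof plan.} The plan is to produce $P$ via a maximality argument inside $\mathcal{V}$ and then to identify the restriction $\mathcal{V}|_{PH}$ as analytic. I expect the analyticity of the restriction to be the main obstacle.

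First I would consider the collection $\mathcal{F}$ of projections $Q$ in $\mathcal{V}$ whose range is reducing for $\mathrm{W}^{*}(V)$ and for which $Q\mathcal{V}Q = Q\mathrm{W}^{*}(V)Q$. This collection is closed under suprema of increasing nets: the WOT-limit $Q$ lies in $\mathcal{V}$, its range remains reducing, and the self-adjointness of $Q\mathcal{V}Q$ passes to the limit by a routine approximation argument. Let $P^{\perp}$ denote the maximal element of $\mathcal{F}$, and set $P = I - P^{\perp}$. Then $P \in \mathcal{V}$, the range of $P$ is reducing (hence in particular invariant) under $\mathcal{V}$, and $P^{\perp}\mathcal{V}P^{\perp}$ is a von Neumann algebra, yielding item (2).

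The substantive step is item (1): showing that if $P \neq 0$, the restriction $\mathcal{V}|_{PH}$ is analytic. I would attack this via Popescu's noncommutative Wold decomposition of the row isometry $V|_{PH}$, which splits it into a unilateral-shift piece (which automatically generates an ampliation of $\mathcal{L}_{n}$ and is therefore analytic) and a row-unitary piece satisfying $\sum_{i}V_{i}V_{i}^{*} = I$. The crux is to show that, together with the shift piece, the row-unitary piece still produces a free semigroup algebra isomorphic to $\mathcal{L}_{n}$. If this failed, one would want to produce a nonzero subprojection of $P$ lying in $\mathcal{F}$, contradicting the maximality of $P^{\perp}$. Establishing the dichotomy \emph{not analytic implies a self-adjoint reducing summand} is where I expect the bulk of the work to concentrate; it would draw on the structure theory for $\mathcal{L}_{n}$ developed by Davidson and Pitts, together with Popescu's wandering vector theorems.

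Finally, item (3) follows formally. For any $A \in \mathcal{V}$, the $\mathcal{V}$-invariance of $PH$ gives $P^{\perp}AP = 0$, so $A = PAP + AP^{\perp}$ with $PAP \in P\mathcal{V}P$ and $AP^{\perp} \in \mathrm{W}^{*}(V)P^{\perp}$. Conversely, any element of $\mathrm{W}^{*}(V)P^{\perp}$ has the form $BP^{\perp}$ with $B \in \mathrm{W}^{*}(V)$, and since $P^{\perp}H$ reduces $\mathrm{W}^{*}(V)$ we have $BP^{\perp} = P^{\perp}BP^{\perp} \in P^{\perp}\mathrm{W}^{*}(V)P^{\perp} = P^{\perp}\mathcal{V}P^{\perp} \subseteq \mathcal{V}$. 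This yields the reverse inclusion, and hence the claimed equality.
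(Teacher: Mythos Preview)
The paper does not prove this theorem; it is quoted from \cite{DKP01}. So there is no in-paper proof to compare against, and one must judge your outline on its own merits and against the original argument of Davidson--Katsoulis--Pitts.

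Your approach has a genuine gap at the very first step. You take $P^{\perp}$ to be a maximal projection in $\mathcal{V}$ whose range is \emph{reducing} for $\mathrm{W}^{*}(V)$ and on which $\mathcal{V}$ coincides with $\mathrm{W}^{*}(V)$. But the structure projection $P^{\perp}$ in the theorem need not have reducing range, and in the most interesting cases it does not. Consider the paper's irreducible dilation-type example in Section~\ref{sec:lebesgue-wold-decomp}: there $\mathrm{W}^{*}(V)=B(\ell^{2}(\mathbb{N}))$, so the only reducing subspaces are $\{0\}$ and the whole space. Your family $\mathcal{F}$ contains only $0$ (since $\mathcal{V}\neq B(\ell^{2})$), so your construction would yield $P^{\perp}=0$ and $P=I$. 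Item~(1) would then force $V$ to be analytic, contradicting the fact that $V$ is of dilation type. In that same example the paper points out that the true structure projection is the rank-one projection $e_{1}e_{1}^{*}$, whose range is certainly not reducing for $B(\ell^{2})$. Your argument for the reverse inclusion in item~(3) also relies on $P^{\perp}H$ being reducing, and breaks for the same reason.

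The approach in \cite{DKP01} is quite different: one lets $\mathcal{J}$ be the WOT-closed ideal of $\mathcal{V}$ generated by $V_{1},\ldots,V_{n}$ and sets $\mathcal{J}_{0}=\bigcap_{k\geq1}\overline{\mathcal{J}^{k}}^{\,\mathrm{WOT}}$. The key technical step is to show that $\mathcal{J}_{0}$ is in fact a WOT-closed left ideal of the von Neumann algebra $\mathrm{W}^{*}(V)$, hence $\mathcal{J}_{0}=\mathrm{W}^{*}(V)P^{\perp}$ for a projection $P^{\perp}\in\mathcal{J}_{0}\subseteq\mathcal{V}$. This immediately gives $\mathrm{W}^{*}(V)P^{\perp}\subseteq\mathcal{V}$, which is precisely the nontrivial inclusion in item~(3); invariance of $PH$ and the analyticity of $\mathcal{V}|_{PH}$ are then read off from the vanishing of $\mathcal{J}_{0}$ on $PH$. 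The moral is that $P^{\perp}$ arises from the ideal structure of $\mathcal{V}$, not from a direct-sum decomposition, and the resulting picture is triangular rather than block-diagonal.
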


The analytic structure of a free semigroup algebra reveals itself
in the form of wandering vectors. Let $V=(V_{1},\ldots,V_{n})$ be
an isometric $n$-tuple acting on a Hilbert space $H$. A vector $x$
in $H$ is said to be \textbf{wandering} for $V$ if the set of vectors
$\{V_{w}x:w\in\mathbb{F}_{n}^{*}\}$ is orthonormal. In this case
we will also say that $x$ is wandering for the free semigroup algebra
generated by $V$.

The existence of wandering vectors for an analytic free semigroup
algebra was established in \cite{Ken11}, settling a conjecture first
made in \cite{DKP01} (see also \cite{DLP05} and \cite{DY08}). Examples
show that the structure of an analytic free semigroup algebra can
be quite complicated, making this result far from obvious.

\subsection{Dilation theory\label{sub:dilation-theory}}

Recall that an operator $T$ is said to be contractive if $\|T\|\leq1$.
An $n$-tuple of operators $T=(T_{1},\ldots,T_{n})$ acting on a Hilbert
space $H$ is said to be \textbf{contractive} if the row operator
$[T_{1}\ \cdots\ T_{n}]:H^{n}\to H$ is contractive.

Sz.-Nagy showed that every contractive operator $T$ acting on a Hilbert
space $H$ has a unique minimal dilation to an isometry $V$, acting
on a bigger Hilbert space $K$ (see for example \cite{SF70}). This
means that $H\subseteq K$, $H$ is cyclic for $V$ and
\[
T^{k}=P_{H}V^{k}\mid_{H},\quad k\geq1.
\]

Sz.-Nagy's dilation theorem was generalized in the work of Bunce,
Frazho and Popescu in \cite{Bun84}, \cite{Fra82} and \cite{Pop89a}
respectively. They showed that every contractive $n$-tuple of operators
$T=(T_{1},\ldots,T_{n})$ acting on a Hilbert space $H$ has a unique
minimal dilation to an isometric $n$-tuple $V=(V_{1},\ldots,V_{n})$,
acting on a bigger Hilbert space $K$. This means that $H\subseteq K$,
$H$ is cyclic for $V_{1},\ldots,V_{n}$ and 
\[
P_{H}V_{i_{1}}\cdots V_{i_{k}}\mid_{H}=T_{i_{1}}\cdots T_{i_{k}},\quad i_{1},\ldots,i_{k}\in\{1,\ldots,n\}\ \mbox{and\ }k\geq1.
\]

\subsection{The Wold decomposition}

The classical Wold decomposition decomposes a single isometry into
the direct sum of a unilateral shift of some multiplicity and a unitary.
In order to state the Wold decomposition of an isometric tuple, we
need to generalize these notions.

In Section \ref{sub:noncomm-funct-alg}, we introduced the unilateral
$n$-shift $L=(L_{1},\ldots,L_{n})$, and we saw that it is the natural
higher-dimensional generalization of the classical unilateral shift.
An isometric $n$-tuple is said to be a \textbf{unilateral shift of
multiplicity $\alpha$} if it is unitarily equivalent to the ampliation
$L^{(\alpha)}=(L_{1}^{(\alpha)},\ldots,L_{n}^{(\alpha)})$, for some
positive integer $\alpha$.

The higher-dimensional generalization of a unitary is based on the
fact that a unitary is the same thing as a surjective isometry. An
$n$-tuple of operators $U=(U_{1},\ldots,U_{n})$ is said to be \textbf{unitary}
if the operator $[U_{1}\ \cdots\ U_{n}]:H^{n}\to H$ is a surjective
isometry. This is equivalent to requiring that the operators $U_{1},\ldots,U_{n}$
satisfy
\[
\sum_{i=1}^{n}U_{i}U_{i}^{*}=I.
\]
Note that a unilateral shift is not unitary. This is because the ``vacuum''
vector $\xi_{\varnothing}$ in $F_{n}^{2}$ is not contained in the
range of the unilateral $n$-shift $L=(L_{1},\ldots,L_{n})$.

In \cite{DP99}, Davidson and Pitts studied a family of ``atomic''
isometric tuples that arise from certain infinite directed trees.
As the following example shows, this family contains a large number
of unitary tuples.
\begin{example}
Fix an infinite directed $n$-ary tree $B$ with vertex set $V$ such
that every vertex has a parent. For a vertex $v$ in $V$, let $c_{i}(v)$
denote the $i$-th child of $v$. Let $H=\ell^{2}(V)$, so that the
set $\{e_{v}:v\in V\}$ is an orthonormal basis for $H$. Define operators
$S_{1},\ldots,S_{n}$ on $H$ by
\[
S_{i}e_{v}=e_{c_{i}(v)},\quad1\leq i\leq n.
\]
It's clear that $S_{1},\ldots,S_{n}$ are isometries, and the fact
that $B$ is an infinite directed $n$-ary tree implies that the range
of $S_{i}$ and the range of $S_{j}$ are orthogonal for $i\ne j$.
Thus $S=(S_{1},\ldots,S_{n})$ is an isometric $n$-tuple. The fact
that every vertex has a parent implies that every basis vector is
in the range of some $S_{i}$. Thus $S$ is a unitary $n$-tuple.
\end{example}

Let $V=(V_{1},\ldots,V_{n})$ be an arbitrary isometric $n$-tuple.
If $V$ is unitary, then the $\mathrm{C}^{*}$-algebra $\mathrm{C}^{*}(V_{1},\ldots,V_{n})$
generated by $V$ is isomorphic to the Cuntz algebra $\mathcal{O}_{n}$.
Otherwise, it is isomorphic to the extended Cuntz algebra $\mathcal{E}_{n}$,
the extension of the compacts by $\mathcal{O}_{n}$. Since the only
irreducible {*}-representation of the compacts is the identity representation,
and since $\mathcal{O}_{n}$ is simple, a {*}-representation of $\mathcal{E}_{n}$
can be decomposed into a multiple of the identity representation and
a representation of $\mathcal{O}_{n}$. The Wold decomposition of
an isometric $n$-tuple, which was proved by Popescu in \cite{Pop89a},
can be obtained as a consequence of these $\mathrm{C}^{*}$-algebraic
facts, based on the observation that the $\mathrm{C}^{*}$-algebra
generated by a unilateral $n$-shift is isomorphic to $\mathcal{E}_{n}$.

\begin{prop}
[The Wold decomposition]\label{pro:wold-decomp}Let $V=(V_{1},\ldots,V_{n})$
be an isometric $n$-tuple. Then we can decompose $V$ as
\[
V=V_{u}\oplus U,
\]
where $V_{u}$ is a unilateral $n$-shift and $U$ is a unitary $n$-tuple.
\end{prop}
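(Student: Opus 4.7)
The plan is to imitate the proof of the classical Wold decomposition by splitting $H$ via a wandering subspace. Set
\[
W = H \ominus \bigl(V_1 H + \cdots + V_n H\bigr),
\]
which is the orthogonal complement of the range of the row isometry $[V_{1}\ \cdots\ V_{n}]$. I will show that the closed linear span $M$ of $\{V_w W : w \in \mathbb{F}_n^*\}$ carries a unilateral $n$-shift of multiplicity $\dim W$, and that $N = M^\perp$ is $V$-invariant with $V|_N$ a unitary tuple, giving the desired decomposition.

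The first step is to check orthogonality: using the Cuntz relations $V_i^* V_j = \delta_{ij} I$, one shows that $V_w W \perp V_{w'} W$ whenever $w \neq w'$ in $\mathbb{F}_n^*$. If $w$ and $w'$ share no common initial letter, this follows at once from orthogonality of the ranges of $V_i$ and $V_j$ for $i\ne j$; otherwise, strip off the common prefix and reduce to the identity $V_u W \perp W$ for any nonempty $u$, which holds since $V_u H \subseteq V_{u_1} H \perp W$. Once this orthogonality is in hand, the map $V_w e_\lambda \mapsto \xi_w \otimes e_\lambda$, defined on an orthonormal basis $\{e_\lambda\}_{\lambda\in\Lambda}$ of $W$, extends to a unitary from $M$ onto $F_n^2 \otimes \ell^2(\Lambda)$ that intertwines $V_i|_M$ with $L_i^{(\dim W)}$. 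Invariance of $M$ under each $V_i$ is clear from $V_i V_w W \subseteq V_{iw} W$.

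For the complementary summand, invariance of $N$ under each $V_i$ is checked by computing $\langle V_i y, V_w x\rangle$ for $y \in N$, $x \in W$, $w \in \mathbb{F}_n^*$: if $w$ is empty the inner product vanishes because $V_i H \perp W$; if $w$ begins with $i$, the adjoint $V_i^*$ shifts off and reduces the pairing to $\langle y, V_{w'} x\rangle = 0$; and if $w$ begins with a different letter the Cuntz relations kill it directly. Since $W \subseteq M$, we also have $N \subseteq W^\perp = V_1 H + \cdots + V_n H$, which is precisely the statement that $\sum_i V_i V_i^* y = y$ for every $y \in N$; hence $[V_1|_N\ \cdots\ V_n|_N]$ is a surjective isometry, so $V|_N$ is a unitary $n$-tuple in the sense of this section.

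The main obstacle is purely combinatorial: establishing the orthogonality of $\{V_w W\}_{w \in \mathbb{F}_n^*}$ across words of arbitrary and possibly different lengths, which has no direct analogue in the $n=1$ case where one compares only powers of a single isometry. Once this orthogonality is in place, the rest reduces to a routine application of the Cuntz relations. As indicated in the paragraph preceding the statement, the same conclusion can alternatively be reached $\mathrm{C}^*$-algebraically: the shift summand $V_u$ corresponds to the kernel of the quotient $\mathrm{C}^*(V) \to \mathcal{O}_n$, while $U$ corresponds to its image.
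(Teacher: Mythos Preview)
Your argument is correct. The paper does not actually supply a proof of this proposition: it attributes the result to Popescu \cite{Pop89a} and, in the paragraph preceding the statement, sketches only a $\mathrm{C}^{*}$-algebraic route, decomposing a representation of the extended Cuntz algebra $\mathcal{E}_{n}$ into a multiple of the identity representation (giving the shift summand) plus a representation of $\mathcal{O}_{n}$ (giving the unitary summand).

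Your approach is instead the direct wandering-subspace argument, which is essentially Popescu's original one and is more elementary, needing no structure theory of $\mathcal{E}_{n}$ or $\mathcal{O}_{n}$. The one step you leave implicit is in the surjectivity of $[V_{1}|_{N}\ \cdots\ V_{n}|_{N}]$: from $y=\sum_{i}V_{i}V_{i}^{*}y$ you need $V_{i}^{*}y\in N$, and this follows because the $V_{i}$-invariance of $M$ that you already verified gives $V_{i}^{*}$-invariance of $N=M^{\perp}$. Your final paragraph correctly identifies the $\mathrm{C}^{*}$-algebraic alternative as precisely what the paper alludes to.
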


\subsection{Absolutely continuous and singular isometric tuples}

As in the classical case, an isometric $n$-tuple $V=(V_{1},\ldots,V_{n})$
induces a contractive representation of the noncommutative disk algebra
$\mathcal{A}_{n}$, called the $\mathcal{A}_{n}$ functional calculus
for $V$, determined by 
\[
L_{i_{1}}\cdots L_{i_{k}}\to V_{i_{1}}\cdots V_{i_{k}},\quad i_{1},\ldots,i_{k}\in\{1,\ldots,n\}\mbox{ and }k\geq1.
\]
This is a consequence of Popescu's generalization of von Neumann's
inequality in \cite{Pop91}.

Recall from Section \ref{sub:noncomm-funct-alg} that $\mathcal{A}_{n}$
is a proper weak-{*} dense subalgebra of the noncommutative analytic
Toeplitz algebra $\mathcal{L}_{n}$. The following definition is the
natural generalization of Definition \ref{def:class-abs-cont}.
\begin{defn}
\label{def:mult-abs-cont}Let $V=(V_{1},\ldots,V_{n})$ be an isometric
$n$-tuple. We will say that $V$ is \textbf{absolutely continuous}
if the representation of $\mathcal{A}_{n}$ induced by $V$ is the
restriction to $\mathcal{A}_{n}$ of a weak-{*} continuous representation
of $\mathcal{L}_{n}$. We will say that $V$ is \textbf{singular}
if $V$ has no absolutely continuous restriction to an invariant subspace.
\end{defn}

It is clear from Definition \ref{def:analytic-isom-tuple} and Definition
\ref{def:mult-abs-cont} that an analytic isometric tuple is absolutely
continuous. In order to obtain the Lebesgue-von Neumann-Wold decomposition
of an isometric tuple, we will prove the converse result that an absolutely
continuous isometric tuple is analytic.

\section{Absolutely continuous isometric tuples\label{sec:abs-cont}}

The main result in this section is an operator-algebraic characterization
of an absolutely continuous isometric tuple. Specifically, we will
show that for $n\geq2$, every absolutely continuous isometric $n$-tuple
is analytic.

For $n\geq2$, fix an absolutely continuous isometric $n$-tuple $S=(S_{1},\ldots,S_{n})$
acting on a Hilbert space $H$. Let $\Phi$ denote the corresponding
representation of the noncommutative disk algebra $\mathcal{A}_{n}$,
given by
\[
\Phi(L_{w})=S_{w},\quad w\in\mathbb{F}_{n}^{*}.
\]
Since $S$ is absolutely continuous, $\Phi$ extends to a representation
of $\mathcal{L}_{n}$ that is weak-{*} continuous.

It was shown in Corollary 1.2 of \cite{DY08} that $\Phi$ is actually
a completely isometric isomorphism and a weak-{*} homeomorphism from
$\mathcal{L}_{n}$ to the weak-{*} closed algebra generated by $S_{1},\ldots,S_{n}$.
This is equivalent to the fact that an infinite ampliation of $S$
is an analytic isometric tuple. Evidently, it is much more difficult
to show that $S$ is analytic. As an explanation, we offer the aphorism
that things are generally much nicer in the presence of infinite multiplicity.

Showing that $S$ is analytic amounts to showing that the free semigroup
algebra (i.e. the weakly closed algebra) $\mathrm{W}(S)$ generated
by $S_{1},\ldots,S_{n}$ is isomorphic to the noncommutative analytic
Toeplitz algebra $\mathcal{L}_{n}$. Since we know from above that
the weak-{*} closed algebra generated by $S_{1},\ldots,S_{n}$ is
isomorphic to $\mathcal{L}_{n}$, our strategy will be to show that
this algebra  is actually equal to $\mathrm{W}(S)$.

\subsection{\label{sub:abs-cont-dual-op-sys}The noncommutative Toeplitz operators}

Let $\mathcal{S}$ denote the weak-{*} closed algebra generated by
$S_{1},\ldots,S_{n}$. The map $\Phi$ introduced at the beginning
of this section is a completely isometric isomorphism and a weak-{*}
homeomorphism from $\mathcal{L}_{n}$ to $\mathcal{S}$. It will be
useful for what follows to extend $\Phi$ even further. Let $\mathcal{M}_{n}$
denote the weak-{*} closure of the operator system $\mathcal{L}_{n}+\mathcal{L}_{n}^{*}$.
We will call the elements of $\mathcal{M}_{n}$ the \textbf{noncommutative
Toeplitz operators}, because they are a natural higher-dimensional
generalization of the classical Toeplitz operators.

The noncommutative Toeplitz operators were introduced by Popescu in
\cite{Pop89b}. It was shown in Corollary 1.3 of \cite{Pop09} that
$A$ belongs to $\mathcal{M}_{n}$ if and only if
\[
R_{i}^{*}AR_{j}=\begin{cases}
A & \mbox{if }i=j,\\
0 & \mbox{otherwise},
\end{cases}
\]
where $R_{1},\ldots,R_{n}$ are the right multiplication operators
introduced in Section \ref{sub:noncomm-funct-alg}. A short proof
of this fact was also given in Lemma 3.2 of \cite{Ken11}. It follows
from this characterization that $\mathcal{M}_{n}$ is weakly closed.

Let $\mathcal{T}$ denote the weak-{*} closure of the operator system
$\mathcal{S}+\mathcal{S}^{*}$. The proof of the following proposition
is nearly identical to the proof of Theorem 3.6 of \cite{Ken11}.
\begin{prop}
Let $S=(S_{1},\ldots,S_{n})$ be an absolutely continuous isometric
$n$-tuple. The representation $\Phi$ of $\mathcal{L}_{n}$ induced
by $S$ extends to a completely isometric and weak-{*} homeomorphic
{*}-map from $\mathcal{M}_{n}$ to $\mathcal{T}$.
\end{prop}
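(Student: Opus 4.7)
The plan is to define the extension $\tilde\Phi$ first on the algebraic operator system $\mathcal{L}_n + \mathcal{L}_n^*$ via the formula $\tilde\Phi(A + B^*) = \Phi(A) + \Phi(B)^*$ for $A, B \in \mathcal{L}_n$, and then to push it by weak-* continuity onto all of $\mathcal{M}_n$. Well-definedness on $\mathcal{L}_n + \mathcal{L}_n^*$ is a consequence of the standard fact that $\mathcal{L}_n \cap \mathcal{L}_n^* = \mathbb{C} I$ (visible from the Fourier-series representation of elements of $\mathcal{L}_n$): if $A_1 + B_1^* = A_2 + B_2^*$ with $A_i, B_i \in \mathcal{L}_n$, then $A_1 - A_2 = (B_2 - B_1)^*$ is a scalar multiple of $I$, which the unital map $\Phi$ preserves. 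The resulting $\tilde\Phi$ is manifestly linear and *-preserving.

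The main technical step is complete contractivity. I first handle polynomials in $L_1, \dots, L_n, L_1^*, \dots, L_n^*$. Since $\Phi|_{\mathcal{A}_n}$ is a unital completely contractive homomorphism, Popescu's dilation theorem (Section~\ref{sub:dilation-theory}) provides a *-representation of the extended Cuntz $\mathrm{C}^*$-algebra whose compression to $H$ is $\Phi|_{\mathcal{A}_n}$; this *-representation restricts to a UCP extension of $\Phi|_{\mathcal{A}_n}$ to $\mathcal{A}_n + \mathcal{A}_n^*$ that necessarily sends $L_w \mapsto S_w$ and $L_w^* \mapsto S_w^*$, so it coincides with $\tilde\Phi$ on polynomials, giving complete contractivity there. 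To reach all of $\mathcal{L}_n + \mathcal{L}_n^*$, I apply Popescu's noncommutative Poisson transforms $K_r$, $0 < r < 1$: these are UCP maps on $\mathcal{B}(F_n^2)$ carrying $X \in \mathcal{M}_n$ with Fourier series $\sum_w a_w L_w + \sum_{w \ne \varnothing} b_w L_w^*$ to $\sum_w r^{|w|} a_w L_w + \sum_{w \ne \varnothing} r^{|w|} b_w L_w^*$, satisfying $K_r(X) \to X$ weak-* and $\|K_r(X)\| \leq \|X\|$, with each $K_r(X)$ approximable in norm by polynomials in $L_i, L_i^*$. Combining the polynomial bound with the weak-* continuity of $\Phi$ on $\mathcal{L}_n$ and lower semicontinuity of the operator norm under weak-* convergence yields $\|\tilde\Phi(A + B^*)\| \leq \|A + B^*\|$, and the identical argument at every matrix level establishes complete contractivity of $\tilde\Phi$ on $\mathcal{L}_n + \mathcal{L}_n^*$.

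For the extension to $\mathcal{M}_n$, I define $\tilde\Phi(X)$ to be the weak-* limit of $\tilde\Phi(K_r(X))$ as $r \to 1^-$; existence and consistency of this limit follow from the analogous Poisson-transform machinery on $\mathcal{T}$ built from $S$ together with the weak-* continuity of $\Phi$. The extended map is weak-* continuous, *-preserving, and completely isometric (the reverse inequality comes from running the same argument with the roles of $L_i$ and $S_i$ swapped, using that $\Phi$ is completely isometric on $\mathcal{L}_n$); it surjects onto $\mathcal{T}$ because its image contains $\mathcal{S} + \mathcal{S}^*$ and is weak-* closed; and weak-* homeomorphy follows from a Krein-Smulian argument on the unit ball, since a weak-* continuous completely isometric bijection between dual operator systems has weak-* continuous inverse. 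The main obstacle I anticipate is the careful verification that the radial-limit definition of $\tilde\Phi$ on $\mathcal{M}_n$ is unambiguous and matches the prescribed values on $\mathcal{L}_n + \mathcal{L}_n^*$; this is where absolute continuity of $S$ plays its decisive role, providing exactly the weak-* continuous transfer of Popescu's Poisson-transform structure from the $L_i$ side to the $S_i$ side.
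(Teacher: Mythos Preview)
The paper does not supply its own proof of this proposition; it merely records that ``the proof of the following proposition is nearly identical to the proof of Theorem~3.6 of~\cite{Ken11}.'' Consequently there is no in-paper argument to compare your sketch against directly. Your route---the $*$-homomorphism $C^*(L)\to B(H)$ sending $L_i\mapsto S_i$ to obtain complete contractivity on polynomials, then Popescu's Poisson/Cesaro machinery to pass first to $\mathcal{L}_n+\mathcal{L}_n^*$ and then to $\mathcal{M}_n$---is a standard and correct strategy in this area and is very much in the spirit of the cited reference.

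Two points worth tightening. First, for the reverse inequality you invoke ``the same argument with the roles of $L_i$ and $S_i$ swapped,'' but when $S$ is a Cuntz (unitary) tuple the map $S_i\mapsto L_i$ does not extend to a $*$-representation of $C^*(S)\cong\mathcal{O}_n$ on $F_n^2$; you must first dilate $L$ to a unitary tuple on a larger space and compress back (the coinvariance of $F_n^2$ then gives $L_i^*$ on the nose). Second, the step you correctly flag as the main obstacle---well-definedness and weak-$*$ continuity of the radial-limit extension to all of $\mathcal{M}_n$---is genuinely the crux, and your appeal to ``analogous Poisson-transform machinery on $\mathcal{T}$'' is too vague as stated. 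The honest mechanism is that absolute continuity supplies intertwining operators $X:F_n^2\to H$ (Theorem~2.7 of~\cite{DLP05}) which convert vector functionals on $\mathcal{T}$ into trace-class functionals on $\mathcal{M}_n$; this lets you identify the weak-$*$ cluster points of $\tilde\Phi(K_r(X))$ and build the preadjoint map directly. Filling that in is exactly what the omitted argument in~\cite{Ken11} presumably does.
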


We will need to exploit the fact that $\mathcal{M}_{n}$ and $\mathcal{T}$
are dual spaces. Let $\mathcal{T}_{*}$ denote the predual of $\mathcal{T}$,
i.e. the set of weak-{*} continuous linear functionals on $\mathcal{T}$.
Similarly, let $\mathcal{M}_{n*}$ denote the predual of $\mathcal{M}_{n}$.
Basic functional analysis implies that the inverse map $\Phi^{-1}$
is the dual of an isometric isomorphism $\phi$ from $\mathcal{M}_{n*}$
to $\mathcal{T}_{*}$. Moreover, since $\Phi^{-1}$ is isometric,
so is $\phi$.

We can identify the predual of $B(F_{n}^{2})$, i.e. the set of weak-{*}
continuous linear functionals on $B(F_{n}^{2})$, with the set of
trace class operators $C^{1}(F_{n}^{2})$ on $F_{n}^{2}$, where $K$
in $C^{1}(F_{n}^{2})$ corresponds to the linear functional
\[
(T,K)=\operatorname{tr}(TK),\quad T\in B(F_{n}^{2}).
\]
If we let $(\mathcal{M}_{n})_{\perp}$ denote the preannihilator of
$\mathcal{M}_{n}$, i.e. 
\[
(\mathcal{M}_{n})_{\perp}=\{K\in C^{1}(F_{n}^{2}):\operatorname{tr}(AK)=0,\quad\forall A\in\mathcal{M}_{n}\},
\]
then we can identify the predual $(\mathcal{M}_{n})_{*}$ with the
quotient space $C^{1}(F_{n}^{2})/(\mathcal{M}_{n})_{\perp}$. Similarly,
we can identify the predual $\mathcal{T}_{*}$ with the quotient space
$C^{1}(H)/\mathcal{T}_{\perp}$.

For $\xi$ and $\eta$ in $F_{n}^{2}$, it will be convenient to let
$[\xi\otimes\eta]_{\mathcal{M}_{n}}$ denote the weak-{*} continuous
linear functional on $\mathcal{M}_{n}$ given by
\[
(A,[\xi\otimes\eta]_{\mathcal{M}_{n}})=(A\xi,\eta),\quad A\in\mathcal{M}_{n}.
\]
In other words, $[\xi\otimes\eta]_{\mathcal{M}_{n}}$ denotes the
equivalence class of the rank one tensor $x\otimes y$ in $(\mathcal{M}_{n})_{*}$.
Similarly, for $x$ and $y$ in $H$, let $[x\otimes y]_{\mathcal{T}}$
denote the weak-{*} continuous linear functional on $\mathcal{T}$
given by
\[
(T,[x\otimes y]_{\mathcal{T}})=(Tx,y),\quad T\in\mathcal{T}.
\]

\subsection{Intertwining operators}

An operator $X:F_{n}^{2}\to H$ is said to \textbf{intertwine} the
isometric $n$-tuple $S=(S_{1},\ldots,S_{n})$ and the unilateral
$n$-shift $L=(L_{1},\ldots,L_{n})$ if it satisfies
\[
XL_{i}=S_{i}X,\quad1\leq i\leq n.
\]
Observe that if $X$ intertwines $S$ and $L$, then the operator
$JX^{*}XJ$ is a noncommutative Toeplitz operator, where $J$ is the
unitary flip introduced in Section \ref{sub:noncomm-funct-alg}. Indeed,
using the fact that $JR_{i}=L_{i}J$ for $1\leq i\leq n$, we compute
\begin{eqnarray*}
R_{i}^{*}JX^{*}XJR_{j} & = & JL_{i}^{*}X^{*}XL_{j}J\\
 & = & JX^{*}S_{i}^{*}S_{j}XJ\\
 & = & \begin{cases}
JX^{*}XJ & \mbox{if }i=j,\\
0 & \mbox{otherwise}.
\end{cases}
\end{eqnarray*}
Since $S$ is absolutely continuous, it follows from Theorem 2.7 of
\cite{DLP05} that every vector $x$ in $H$ is in the range of an
operator that intertwines $S$ and $L$.

\subsection{\label{sub:dual-alg-theory-prelim}Dual algebra theory}

Recall that to prove the isometric $n$-tuple $S=(S_{1},\ldots,S_{n})$
is analytic, our strategy is to show that the weak-{*} closed algebra
$\mathcal{S}=\mathrm{W}^{*}(S_{1},\ldots,S_{n})$ is actually equal
to the weakly closed algebra $\mathrm{W}(S_{1},\ldots,S_{n})$. This
amounts to showing that $\mathcal{S}$ is already weakly closed. However,
instead of working directly with $\mathcal{S}$, it will be necessary
to work with the operator system $\mathcal{T}$. In fact, we will
need to consider the general structure of the predual of $\mathcal{T}$.

In Section \ref{sub:abs-cont-dual-op-sys}, we saw that an element
in the predual $\mathcal{T}_{*}$ of the operator system $\mathcal{T}$
can be identified with an equivalence class of trace class operators.
We will show that $\mathcal{T}$ satisfies a very powerful predual
``factorization'' property, in the sense that the equivalence class
of an element in the predual $\mathcal{T}_{*}$ always contains ``nice''
representatives. We will see that $\mathcal{S}$ inherits this property
from $\mathcal{T}$, and that this will imply the desired result.

The idea of studying factorization in the predual of an operator algebra
is the central idea in dual algebra theory, which has been applied
with great success to a number of problems in the commutative setting
(see for example \cite{BFP85}). As we will see, many of the factorization
properties that were introduced in the commutative setting make sense
even in the present noncommutative setting.

\begin{defn}
A weak-{*} closed subspace $\mathcal{A}$ of operators acting on a
Hilbert space $H$ is said to have property $\mathbb{A}_{1}(1)$ if,
given a weak-{*} continuous linear functional $\tau$ on $\mathcal{A}$
with $\|\tau\|\leq1$ and $\epsilon>0$, there are vectors $x$ and
$y$ in $H$ such that $\|x\|\leq(1+\epsilon)^{1/2}$, $\|y\|\leq(1+\epsilon)^{1/2}$
and $\tau=[x\otimes y]_{\mathcal{A}}$.
\end{defn}

If a weak-{*} closed subspace of $\mathcal{B}(H)$ has property $\mathbb{A}_{1}(1)$,
then the equivalence class of any weak-{*} continuous linear functional
on the subspace contains an operator of rank one. Note that in this
case, every weak-{*} continuous linear functional on the subspace
is actually weakly continuous. It was shown in \cite{DP99} that $\mathcal{L}_{n}$
has property $\mathbb{A}_{1}(1)$, and the same proof also shows that
$\mathcal{M}_{n}$ has property $\mathbb{A}_{1}(1)$.

Of course, the main difficulty with a predual factorization property
like property $\mathbb{A}_{1}(1)$ is that it is often extremely difficult
to show that it holds. The next factorization property turns out to
be much stronger than property $\mathbb{A}_{1}(1)$, but it is sometimes
easier to show that it holds due to its approximate nature.

\begin{defn}
A weak-{*} closed subspace $\mathcal{A}$ of operators acting on a
Hilbert space $H$ is said to have property $\mathcal{X}_{0,1}$ if,
given a weak-{*} continuous linear functional $\tau$ on $\mathcal{A}$
with $\|\tau\|\leq1$, $z_{1},...,z_{q}$ in $H$ and $\epsilon>0$,
there are vectors $x$ and $y$ in $H$ such that
\begin{enumerate}
\item $\|x\|\leq1$ and $\|y\|\leq1$,
\item $\|[x\otimes z_{j}]_{\mathcal{A}}\|<\epsilon$ and $\|[z_{j}\otimes y]_{\mathcal{A}}\|<\epsilon$
for $1\leq j\leq q$,
\item $\|\tau-[x\otimes y]_{\mathcal{A}}\|<\epsilon$.
\end{enumerate}
\end{defn}

It's easy to see that the infinite ampliation of a weak-{*} closed
subspace of $\mathcal{B}(H)$ has property $\mathcal{X}_{0,1}$. Thus,
intuitively, a weak-{*} closed subspace of $\mathcal{B}(H)$ that
has property $\mathcal{X}_{0,1}$ can be thought of as having ``approximately
infinite'' multiplicity. It was shown in \cite{BFP85} that property
$\mathcal{X}_{0,1}$ implies property $\mathbb{A}_{1}(1)$.

We will show that $\mathcal{T}$ has property $\mathcal{X}_{0,1}$.
Since this property is inherited by weak-{*} closed subspaces, it
will follow that $\mathcal{S}$ has property $\mathcal{X}_{0,1}$,
and hence that $\mathcal{S}$ has property $\mathbb{A}_{1}(1)$. It
is easy to show that any weak-{*} closed subspace of operators with
property $\mathbb{A}_{1}(1)$ is weakly closed (see for example Proposition
59.2 of \cite{Con00}). Thus this will imply the desired result that
$\mathcal{S}$ is weakly closed.

\subsection{Approximate factorization}
\begin{lem}
\label{lem:vecs1}Given unit vectors $x,z_{1},...,z_{q}$ in $H$
and $\epsilon>0$, there are vectors $\xi,\zeta_{1},...,\zeta_{q}$
in $F_{n}^{2}$ such that
\begin{enumerate}
\item $\|\xi\|<\sqrt{q}(1+\epsilon)^{1/2}$,
\item $\|\zeta_{i}\|<(1+\epsilon)^{1/2}$ for $1\leq i\leq q$,
\item $[x\otimes z_{i}]_{\mathcal{T}}=\phi([\xi\otimes\zeta_{i}]_{\mathcal{M}_{n}})$
for $1\leq i\leq q$.
\end{enumerate}
\end{lem}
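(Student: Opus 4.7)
The plan is to combine property $\mathbb{A}_1(1)$ of $\mathcal{M}_n$ with the commutation of $\mathcal{M}_n$ with the right multiplication operators to manufacture a simultaneous factorization. By itself, property $\mathbb{A}_1(1)$ gives individual rank-one representatives for each functional $\phi^{-1}([x\otimes z_i]_{\mathcal{T}})$ in $\mathcal{M}_{n*}$, but there is no common left factor; the trick will be to ``park'' these individual factorizations on disjoint subspaces of the Fock space using the right multiplication operators.

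First, fix $\epsilon' < \epsilon$ and, for each $i$, apply property $\mathbb{A}_1(1)$ of $\mathcal{M}_n$ to the weak-$*$ continuous functional $\tau_i = \phi^{-1}([x\otimes z_i]_{\mathcal{T}})$. Since $\phi$ is isometric and $\|x\| = \|z_i\| = 1$, we have $\|\tau_i\| \leq 1$, so this produces $\xi^{(i)}, \zeta^{(i)} \in F_n^2$ with $\|\xi^{(i)}\|, \|\zeta^{(i)}\| < (1+\epsilon')^{1/2}$ and $[\xi^{(i)}\otimes\zeta^{(i)}]_{\mathcal{M}_n} = \tau_i$. Next, exploit that $n \geq 2$ to choose $q$ distinct words $w_1,\ldots,w_q$ of a common length; then $R_{w_1},\ldots,R_{w_q}$ are isometries with pairwise orthogonal ranges. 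Define
\[
\xi \;=\; \sum_{i=1}^{q} R_{w_i}\xi^{(i)}, \qquad \zeta_j \;=\; R_{w_j}\zeta^{(j)}.
\]
Orthogonality and the isometry of the $R_{w_i}$ immediately yield $\|\xi\|^2 = \sum_i \|\xi^{(i)}\|^2 < q(1+\epsilon)$ and $\|\zeta_j\| = \|\zeta^{(j)}\| < (1+\epsilon)^{1/2}$, the desired bounds.

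It remains to verify $[\xi\otimes\zeta_j]_{\mathcal{M}_n} = \tau_j$, after which applying $\phi$ gives the lemma. The crucial input is the noncommutative Toeplitz characterization $R_i^*AR_j = \delta_{ij}A$ for $A \in \mathcal{M}_n$, which iterates letter by letter to the statement $R_w^*AR_{w'} = \delta_{w,w'}A$ for any pair of equal-length words. Writing $(A\xi,\zeta_j) = (R_{w_j}^*A\xi,\zeta^{(j)})$ and expanding $\xi$ linearly, the sum over $i$ collapses to the single surviving term $(A\xi^{(j)},\zeta^{(j)}) = \tau_j(A)$. The main conceptual obstacle is precisely the need for a single $\xi$ serving all $q$ functionals at once --- this is what forces the $\sqrt{q}$ loss in the norm bound --- and the resolution is the observation that $\mathcal{M}_n$ sits inside the commutant of $\mathcal{R}_n$, so right multiplication by distinct equal-length $R_{w_i}$ decouples the individual factorizations without disturbing the action of $\mathcal{M}_n$.
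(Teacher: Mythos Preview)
Your argument is correct and is essentially identical to the paper's: apply property $\mathbb{A}_1(1)$ to each functional $\phi^{-1}([x\otimes z_i]_{\mathcal{T}})$ separately, then push the resulting vectors into pairwise orthogonal copies of $F_n^2$ via isometries $R_{w_i}\in\mathcal{R}_n$ and sum the left factors, using the Toeplitz relation $R_{w}^*AR_{w'}=\delta_{w,w'}A$ (for equal-length $w,w'$) to see that only the diagonal term survives. One small correction to your closing commentary: $\mathcal{M}_n$ does \emph{not} lie in the commutant of $\mathcal{R}_n$ (indeed $\mathcal{R}_n'=\mathcal{L}_n$, and $L_i^*$ does not commute with $R_i$); what you actually use---and what suffices---is precisely the Toeplitz identity $R_i^*AR_j=\delta_{ij}A$, which is strictly weaker than commutation.
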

\begin{proof}
Since $\mathcal{M}_{n}$ has property $\mathbb{A}_{1}(1)$, there
are vectors $\upsilon_{1}',...,\upsilon_{q}',\zeta_{1}',...,\zeta_{q}'$
in $F_{n}^{2}$ such that $\|\upsilon_{i}'\|<(1+\epsilon)^{1/2}$,
$\|\zeta_{i}'\|<(1+\epsilon)^{1/2}$ and $[x\otimes z_{i}]_{\mathcal{T}}=\phi([\upsilon_{i}'\otimes\zeta_{i}']_{\mathcal{M}_{n}})$
for $1\leq i\leq q$.

Let $V_{i}=R_{12^{k}}$ for $1\leq i\leq q,$ so that $V_{1},...,V_{q}$
are isometries in $\mathcal{R}_{n}$ with pairwise orthogonal ranges.
Set $\xi=\sum_{i=1}^{q}V_{i}\upsilon_{i}'$ and $\zeta_{i}=V_{i}\zeta_{i}'$
for $1\leq i\leq q$. Then $\|\xi\|<\sqrt{q}(1+\epsilon)^{1/2}$,
$\|\zeta_{i}\|<(1+\epsilon)^{1/2}$ and for $T$ in $\mathcal{T}$,
\begin{eqnarray*}
(\phi([\xi\otimes\zeta_{i}]_{\mathcal{M}_{n}}),T) & = & (\Phi^{-1}(T)\xi,\zeta_{i})\\
 & = & (\Phi^{-1}(T)\sum_{j=1}^{q}V_{j}\upsilon_{j}',V_{i}\zeta_{i}')\\
 & = & (\Phi^{-1}(T)\upsilon_{i}',\zeta_{i}')\\
 & = & (\phi([\upsilon_{i}'\otimes\zeta_{i}']_{\mathcal{M}_{n}}),T)\\
 & = & ([x\otimes z_{i}]_{\mathcal{T}},T).
\end{eqnarray*}
Hence $[x\otimes z_{i}]_{\mathcal{T}}=\phi([\xi\otimes\zeta_{i}]_{\mathcal{M}_{n}})$.
\end{proof}

\begin{lem}
\textup{\label{lem:words-isom}}Let $\eta$ be a unit vector contained
in the algebraic span of $\{\xi_{w}:w\in\mathbb{F}_{n}^{*}\}$. Then
there are words $u$ and $v$ in $\mathbb{F}_{n}^{*}$ such that
\[
L_{u}R_{v}\eta=L\xi_{\varnothing}=R\xi_{\varnothing},
\]
where $L$ is an isometry in $\mathcal{L}_{n}$, and $R$ is an isometry
in $\mathcal{R}_{n}$ with range orthogonal to the range of $R_{1}$.\end{lem}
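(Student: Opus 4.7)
Write the given unit vector as $\eta = \sum_{s \in F} a_s \xi_s$ with $F \subset \mathbb{F}_n^*$ a finite set and $\sum_{s \in F} |a_s|^2 = 1$, and set $M = \max\{|s| : s \in F\}$. The plan is to select $u, v \in \mathbb{F}_n^*$ with $|u|, |v| > M$, each ``unbordered'' (no nontrivial proper prefix coincides with a proper suffix), and with $v$ ending in a letter other than $1$, so that $\psi := L_u R_v \eta$ is simultaneously a wandering vector for the left shifts $(L_1,\ldots,L_n)$ and for the right shifts $(R_1,\ldots,R_n)$. A concrete workable choice is $u = 2^{M+1}1$ and $v = 12^{M+1}$. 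With this choice, $\psi$ is supported on the finite set of words $\{u \cdot s \cdot v : s \in F\}$, which all share a common length.

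The heart of the proof, and the main obstacle, is the verification that $\{L_w \psi : w \in \mathbb{F}_n^*\}$ and $\{R_w \psi : w \in \mathbb{F}_n^*\}$ are orthonormal sets. For the left-shift side, orthogonality of $L_w \psi$ and $L_{w'} \psi$ for $w \neq w'$ reduces, in the nontrivial case where one of $w, w'$ is a proper prefix of the other (say $w' = w w_3$ with $w_3$ nonempty), to ruling out the word equation $u s = w_3 u s'$ with $s, s' \in F$. The unborderedness of $u$ combined with the strict inequality $|u| > M \geq |s|, |s'|$ makes this impossible via a short case analysis on the length of $w_3$: no prefix of $u s$ of length $\leq |u|$ equals any $w_3 = 2^k$, and no longer $w_3$ can fit since the resulting $s$ would need length $\geq |u| > M$. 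A symmetric argument, using the corresponding properties of $v$, handles the right-shift wandering. This combinatorial word-equation analysis is the only delicate step.

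Once the wandering property is in hand, the isometries of the conclusion are produced formally. The map $L : \xi_w \mapsto R_w \psi$ extends to a bounded operator on $F_n^2$ that commutes with every $R_j$, hence lies in the commutant of $\mathcal{R}_n$, which is $\mathcal{L}_n$; it is isometric precisely because $\{R_w \psi\}$ is orthonormal, and $L \xi_\varnothing = \psi$ by construction. Symmetrically, $R : \xi_w \mapsto L_w \psi$ is an isometry in $\mathcal{R}_n$ with $R \xi_\varnothing = \psi$. Finally, the range of $R$ equals the closed linear span of $\{L_w \psi : w \in \mathbb{F}_n^*\}$, and every vector in this set is supported on words ending in the last letter of $v$, namely $2$. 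Since the range of $R_1$ consists of vectors supported on words ending in $1$, the ranges of $R$ and $R_1$ are orthogonal, completing the plan.
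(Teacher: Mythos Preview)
Your approach is correct in substance, though two details are misstated. First, the claim that the words $\{usv : s \in F\}$ ``all share a common length'' is false unless every $s \in F$ has the same length; you never use it, so it is harmless. Second, your description of the case analysis is garbled: prefixes of $us$ of length at most $M+1$ \emph{are} of the form $2^k$, so that sentence is wrong as written. What actually kills the equation $us = w_3 u s'$ (with $1 \leq |w_3| = |s|-|s'| \leq M$) is that it forces the proper suffix of $u$ of length $|u|-|w_3|$ to coincide with the prefix of $u$ of that same length, contradicting the unborderedness of $u = 2^{M+1}1$; equivalently, the letter in position $M+2$ is $1$ on the left side and $2$ on the right. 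With that correction the wandering claims go through on both sides, and your construction of $L$ and $R$ via the commutant identity $\mathcal{R}_n' = \mathcal{L}_n$ is fine.

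The paper takes a more direct route: with $u = 12^m$ and $v = 1^m 2$, it writes down $L = \sum_{|w| \leq m} a_w L_{uwv}$ and $R = \sum_{|w| \leq m} a_w R_{uwv}$ explicitly as polynomials in the generators and verifies $L^*L = I$ by checking that $L_v^* L_w^* L_{w'} L_v = \delta_{w,w'} I$ for $|w|, |w'| \leq m$, using only the Cuntz relations. This bypasses both the wandering-vector language and the commutant theorem. Your argument is more conceptual---it explains \emph{why} such isometries must exist---at the cost of invoking $\mathcal{L}_n = \mathcal{R}_n'$; the paper's is shorter and entirely self-contained.
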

\begin{proof}
Expand $\eta$ as $\eta=\sum_{|w|\leq m}a_{w}\xi_{w}$ for some $m\geq0$.
Let $u=12^{m}$ and let $v=1^{m}2$. Then $L_{u}R_{v}\eta=\sum_{|w|\leq m}a_{w}\xi_{uwv}$.
Set $L=\sum_{|w|\leq m}a_{w}L_{uwv}$ and $R=\sum_{|w|\leq m}a_{w}R_{uwv}$.
Then $L_{u}R_{v}\eta=L\xi_{\varnothing}=R\xi_{\varnothing}$, and
it's clear that the range of $R$ is orthogonal to the range of $R_{1}$.

It remains to show that $L$ and $R$ are isometries. For $w$ and
$w'$ in $\mathbb{F}_{n}^{+}$ with $|w|\leq m$ and $|w'|\leq m$,
\[
L_{v}^{*}L_{w}^{*}L_{w'}L_{v}=\begin{cases}
I & \mbox{if }w=w',\\
0 & \mbox{otherwise}.
\end{cases}
\]
This gives 
\begin{eqnarray*}
L^{*}L & = & \sum_{|w|\leq m}\sum_{|w'|\leq m}\overline{a_{w}}a_{w'}L_{uwv}^{*}L_{uw'v}\\
 & = & \sum_{|w|\leq m}\sum_{|w'|\leq m}\overline{a_{w}}a_{w'}L_{v}^{*}L_{w}^{*}L_{w'}L_{v}\\
 & = & \sum_{|w|\leq m}|a_{w}|^{2}I\\
 & = & I,
\end{eqnarray*}
where the last equality follows from the fact that $\eta$ is a unit
vector. Thus $L$ is an isometry, and it follows from a similar computation
that $R$ is an isometry.
\end{proof}

\begin{lem}
\label{lem:vecs2}Given unit vectors $z_{1},...,z_{q}$ in $H$ and
$\epsilon>0$, there exists a unit vector $x$ in $H$ and vectors
$\xi,\zeta_{1},...,\zeta_{q}$ in $F_{n}^{2}$ such that
\begin{enumerate}
\item $\|\xi\|<\sqrt{q}(1+\epsilon)^{1/2}$,
\item $\|\zeta_{i}\|<(1+\epsilon)^{1/2}$ for $1\leq i\leq q$,
\item $\xi=\|\xi\|L\xi_{\varnothing}=\|\xi\|R\xi_{\varnothing}$, where
$L$ is an isometry in $\mathcal{L}_{n}$, and $R$ is an isometry
in $\mathcal{R}_{n}$ with range orthogonal to the range of $R_{1}$, 
\item $\|[x\otimes z_{i}]_{\mathcal{T}}-\phi([\xi\otimes\zeta_{i}]_{\mathcal{M}_{n}})\|<\epsilon$
for $1\leq i\leq q$.
\end{enumerate}
\end{lem}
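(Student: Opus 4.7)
Plan: The strategy is to reduce to Lemma \ref{lem:vecs1}, approximate the resulting Fock-space vector by one of finite support, use Lemma \ref{lem:words-isom} to shift it into the desired special form, and absorb the arising left shift $L_u$ into the choice of $x \in H$ via the isometry $S_u$.

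Fix a small $\delta > 0$ (to be specified at the end) and pick any unit vector $x_0 \in H$. Apply Lemma \ref{lem:vecs1} to $(x_0, z_1, \ldots, z_q)$ with parameter $\delta$ to obtain $\xi_0, \zeta_{i,0} \in F_n^2$ satisfying the exact identity $[x_0 \otimes z_i]_{\mathcal{T}} = \phi([\xi_0 \otimes \zeta_{i,0}]_{\mathcal{M}_n})$ and the usual norm bounds. Approximate $\xi_0$ in norm by a vector $\eta$ in the algebraic span of $\{\xi_w : w \in \mathbb{F}_n^*\}$ with $\|\xi_0 - \eta\| < \delta$, and apply Lemma \ref{lem:words-isom} to $\eta/\|\eta\|$ to obtain words $u, v$ and isometries $L \in \mathcal{L}_n$, $R \in \mathcal{R}_n$ (with range of $R$ orthogonal to that of $R_1$) satisfying $L_u R_v (\eta/\|\eta\|) = L\xi_\varnothing = R\xi_\varnothing$. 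The candidates are
\[
\xi := L_u R_v \eta = \|\eta\| L\xi_\varnothing = \|\eta\| R\xi_\varnothing, \qquad \zeta_i := R_v \zeta_{i,0}, \qquad x := S_u x_0.
\]
By construction $\xi$ has the required form, $\|\xi\| = \|\eta\|$, $\|\zeta_i\| = \|\zeta_{i,0}\|$, and $x$ is a unit vector since $S_u$ is an isometry, so properties (1)--(3) hold for $\delta$ small.

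For property (4), I exploit the commutativity of $R_v \in \mathcal{R}_n$ with both $\mathcal{M}_n$ and $L_u$, together with $R_v^* R_v = I$, to get
\[
\phi([\xi \otimes \zeta_i]_{\mathcal{M}_n})(T) = (\Phi^{-1}(T) L_u R_v \eta, R_v \zeta_{i,0}) = (\Phi^{-1}(T) L_u \eta, \zeta_{i,0}).
\]
The crux is the identity $\Phi^{-1}(T) L_u = \Phi^{-1}(T S_u)$ for all $T \in \mathcal{T}$, which rewrites the right-hand side as $\phi([\eta \otimes \zeta_{i,0}])(TS_u)$. Since $[x \otimes z_i]_{\mathcal{T}}(T) = (T S_u x_0, z_i) = \phi([\xi_0 \otimes \zeta_{i,0}])(T S_u)$, the difference equals $\phi([(\xi_0 - \eta) \otimes \zeta_{i,0}])(T S_u)$, bounded in modulus by $\|\xi_0 - \eta\|\,\|\zeta_{i,0}\|\,\|T\| \le \delta(1+\delta)^{1/2}\|T\|$. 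Taking the supremum over $\|T\|\le 1$ gives $\|[x \otimes z_i]_{\mathcal{T}} - \phi([\xi \otimes \zeta_i]_{\mathcal{M}_n})\| \le \delta(1+\delta)^{1/2}$.

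The main obstacle is establishing the identity $\Phi(A L_u) = \Phi(A) S_u$ for all $A \in \mathcal{M}_n$ (equivalent to $\Phi^{-1}(TS_u) = \Phi^{-1}(T) L_u$). I would verify it on generators $A = L_w$ and $A = L_w^*$ of $\mathcal{L}_n + \mathcal{L}_n^*$ using the Cuntz relations (in each case $L_w^* L_u$ collapses to $L_{w'}^*$, $L_{u'}$, or $0$ exactly as $S_w^* S_u$ does), then extend to $\mathcal{M}_n$ by weak-* continuity of $\Phi$ together with the weak-* continuity of right multiplication by $L_u$ and $S_u$. Finally, choose $\delta$ small enough that $\delta(1+\delta)^{1/2} < \epsilon$, $(1+\delta)^{1/2} < (1+\epsilon)^{1/2}$, and $\sqrt{q}(1+\delta)^{1/2} + \delta < \sqrt{q}(1+\epsilon)^{1/2}$, which controls all the required bounds simultaneously.
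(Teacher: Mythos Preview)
Your proposal is correct and follows essentially the same approach as the paper's proof: apply Lemma~\ref{lem:vecs1} to an arbitrary unit vector, approximate the resulting Fock vector by one of finite support, apply Lemma~\ref{lem:words-isom}, and set $x = S_u x_0$, $\xi = L_u R_v \eta$, $\zeta_i = R_v \zeta_{i,0}$. The only difference is presentational: you introduce an auxiliary parameter $\delta$ and explicitly isolate and verify the identity $\Phi(A L_u) = \Phi(A) S_u$ for $A \in \mathcal{M}_n$, whereas the paper simply asserts the resulting functional identity $([x\otimes z_i]_{\mathcal{T}}-\phi([\xi\otimes\zeta_i]_{\mathcal{M}_n}),T) = ([x'\otimes z_i]_{\mathcal{T}}-\phi([\eta\otimes\zeta_i']_{\mathcal{M}_n}),TS_u)$ without spelling out this step.
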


\begin{proof}
Let $x'$ be any unit vector in $H$. By Lemma \ref{lem:vecs1}, there
are vectors $\xi',\zeta_{1}',...,\zeta_{q}'$ in $F_{n}^{2}$ such
that
\begin{enumerate}
\item $\|\xi'\|<\sqrt{q}(1+\epsilon)^{1/2}$,
\item $\|\zeta_{i}'\|<(1+\epsilon)^{1/2}$ for $1\leq i\leq q$,
\item $[x'\otimes z_{i}]_{\mathcal{T}}=\phi([\xi'\otimes\zeta_{i}']_{\mathcal{M}_{n}})$
for $1\leq i\leq q$.
\end{enumerate}
Let $\eta$ be a vector contained in the algebraic span of $\{\xi_{w}:w\in\mathbb{F}_{n}^{*}\}$
such that $\|\eta\|<\sqrt{q}(1+\epsilon)^{1/2}$ and $\|\xi'-\eta\|<\epsilon/(1+\epsilon)^{1/2}$.
Then 
\begin{eqnarray*}
\|[x'\otimes z_{i}]_{\mathcal{T}}-\phi([\eta\otimes\zeta_{i}']_{\mathcal{M}_{n}})\| & \leq & \|[x'\otimes z_{i}]_{\mathcal{T}}-\phi([\xi'\otimes\zeta_{i}']_{\mathcal{M}_{n}})\|\\
 &  & +\|[(\xi'-\eta)\otimes\zeta_{i}']_{\mathcal{M}_{n}}\|\\
 & \leq & \|\xi'-\eta\|\|\zeta_{i}'\|\\
 & < & \epsilon
\end{eqnarray*}
 for $1\leq i\leq q$.

By Lemma \ref{lem:words-isom}, there are words $u$ and $v$ in $\mathbb{F}_{n}^{+}$
such that 
\[
L_{u}R_{v}\eta=\|\eta\|L\xi_{\varnothing}=\|\eta\|R\xi_{\emptyset},
\]
 where $L$ is an isometry in $\mathcal{L}_{n}$, and $R$ is an isometry
in $\mathcal{R}_{n}$ with range orthogonal to the range of $R_{1}$.
Set $x=S_{u}x'$, $\xi=L_{u}R_{v}\eta$ and $\zeta_{i}=R_{v}\zeta_{i}'$
for $1\leq i\leq q$. Then for $T$ in $\mathcal{T}$,
\begin{eqnarray*}
|([x\otimes z_{i}]_{\mathcal{T}}-\phi([\xi\otimes\zeta_{i}]_{\mathcal{M}_{n}}),T)| & = & |([S_{u}x'\otimes z_{i}]_{\mathcal{T}}-\phi([\eta\otimes\zeta_{i}']_{\mathcal{M}_{n}}),T)|\\
 & = & |([x'\otimes z_{i}]_{\mathcal{T}}-\phi([\eta\otimes\zeta_{i}']_{\mathcal{M}_{n}}),TS_{u})|\\
 & \leq & \|[x'\otimes z_{i}]_{\mathcal{T}}-\phi([\eta\otimes\zeta_{i}']_{\mathcal{M}_{n}})\|\|TS_{u}\|\\
 & < & \epsilon\|T\|.
\end{eqnarray*}
Hence $\|[x\otimes z_{i}]_{\mathcal{T}}-\phi([\xi\otimes\zeta_{i}]_{\mathcal{M}_{n}})\|<\epsilon$. 
\end{proof}

The following result is implied by Lemma 1.2 in \cite{Kri01}.
\begin{lem}
\label{lem:strongly--2}Given a proper isometry $R$ in $\mathcal{R}_{n}$,
vectors $\zeta_{1},...,\zeta_{q}$ in $F_{n}^{2}$ and $\epsilon>0$,
there exists $k\geq1$ such that $\|(R^{*})^{k}\zeta_{i}\|<\epsilon$
for $1\leq i\leq q$.
\end{lem}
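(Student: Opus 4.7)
The assertion amounts to showing that a proper isometry $R$ in $\mathcal{R}_{n}$ is \emph{pure}, i.e.\ $(R^{*})^{k} \to 0$ in the strong operator topology. Once this is established, the lemma is immediate: since $\|(R^{*})^{k}\zeta\|$ is monotonically non-increasing in $k$ (because $RR^{*} \leq I$), one can pick $k_{i}$ with $\|(R^{*})^{k_{i}}\zeta_{i}\| < \epsilon$ separately for each $i$ and then take $k = \max_{i} k_{i}$ as a single index that works for all of $\zeta_{1},\ldots,\zeta_{q}$ at once.

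The purity claim itself is the content of Lemma 1.2 in \cite{Kri01}, which the paper cites directly. To indicate the idea: apply the Wold decomposition to $R$, viewed as an isometry on $F_{n}^{2}$. This yields an orthogonal decomposition $F_{n}^{2} = K_{s} \oplus K_{u}$ with $K_{u} = \bigcap_{k \geq 0} R^{k} F_{n}^{2}$ and $R|_{K_{u}}$ unitary. The telescoping identity
\[
\|\xi\|^{2} = \sum_{j=0}^{k-1} \|(I-RR^{*})(R^{*})^{j}\xi\|^{2} + \|(R^{*})^{k}\xi\|^{2}
\]
shows that $(R^{*})^{k} \to 0$ strongly if and only if $K_{u} = 0$. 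So the task reduces to ruling out a nontrivial unitary part in the Wold decomposition of $R$.

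To rule out a nontrivial $K_{u}$, one exploits the analytic nature of $\mathcal{R}_{n}$ (which, via the unitary flip, is isomorphic to $\mathcal{L}_{n}$). Since $\mathcal{L}_{n}$ commutes with $\mathcal{R}_{n}$, each $L_{i}$ preserves $R^{k} F_{n}^{2}$ for every $k$, so $K_{u}$ is $\mathcal{L}_{n}$-invariant. The Davidson--Pitts Beurling-type invariant subspace theorem for $\mathcal{L}_{n}$ then furnishes a wandering vector for $L_{1},\ldots,L_{n}$ inside $K_{u}$. The existence of such a wandering vector, combined with the unitarity of $R|_{K_{u}}$ and the fact that $R$ is a non-scalar isometry in $\mathcal{R}_{n}$, leads to a contradiction.

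The main obstacle I anticipate is making the final clash precise: the argument requires a careful simultaneous analysis of the unitary action of $R$ on $K_{u}$ and the shift action of the commuting $L_{i}$'s. Since this delicate step is already worked out in \cite{Kri01}, the most efficient course of action, and the one the paper takes, is to invoke the cited lemma directly rather than to reprove it from scratch.
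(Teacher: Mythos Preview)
Your proposal is correct and matches the paper's approach exactly: the paper does not give a proof at all but simply states that the result ``is implied by Lemma 1.2 in \cite{Kri01}.'' Your reduction to purity of $R$ together with the monotonicity argument for handling finitely many vectors simultaneously is sound, and your sketch of the Kribs argument (Wold decomposition of $R$, $\mathcal{L}_{n}$-invariance of the unitary part $K_{u}$, and the Beurling-type theorem yielding a contradiction) accurately reflects the content of the cited lemma, so you have in fact gone further than the paper itself.
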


\begin{lem}
\label{lem:weakly}Given a proper isometry $S$ in $\mathcal{S}$,
vectors $u$ and $v$ in $H$ and $\epsilon>0$, there exists $k\geq1$
such that $\|[u\otimes(S^{*})^{k}v]_{\mathcal{S}}\|<\epsilon$.\end{lem}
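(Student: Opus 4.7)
The plan is to reduce the problem to the strong convergence property on $F_{n}^{2}$ via an intertwining operator, and then invoke Lemma \ref{lem:strongly--2} after transferring from $\mathcal{L}_{n}$ to $\mathcal{R}_{n}$ through the unitary flip $J$.

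Since $\Phi:\mathcal{L}_{n}\to\mathcal{S}$ is a completely isometric weak-{*} homeomorphism, $S=\Phi(L_{0})$ for some proper isometry $L_{0}\in\mathcal{L}_{n}$. By the intertwining result cited just before this subsection (Theorem 2.7 of \cite{DLP05}), the vector $u$ lies in the range of some operator $X:F_{n}^{2}\to H$ with $XL_{i}=S_{i}X$ for $1\leq i\leq n$, so I pick $\alpha\in F_{n}^{2}$ with $X\alpha=u$. Passing from generators to words and then to Cesaro sums, which converge strongly, shows that $XA=\Phi(A)X$ for every $A\in\mathcal{L}_{n}$; in particular $S^{k}X=XL_{0}^{k}$, whose adjoint gives $X^{*}(S^{*})^{k}=(L_{0}^{*})^{k}X^{*}$.

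For any $T\in\mathcal{S}$ with $\|T\|\leq1$, setting $A=\Phi^{-1}(T)\in\mathcal{L}_{n}$ (so $\|A\|\leq1$) and chasing through the intertwining and adjoint identities yields
\[
(Tu,(S^{*})^{k}v)=(TX\alpha,(S^{*})^{k}v)=(XA\alpha,(S^{*})^{k}v)=(A\alpha,(L_{0}^{*})^{k}X^{*}v),
\]
so Cauchy-Schwarz gives
\[
\|[u\otimes(S^{*})^{k}v]_{\mathcal{S}}\|\leq\|\alpha\|\,\|(L_{0}^{*})^{k}X^{*}v\|.
\]

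It remains to make the right-hand side small. Conjugation by the unitary flip $J$ maps $\mathcal{L}_{n}$ onto $\mathcal{R}_{n}$, so $R_{0}=JL_{0}J$ is a proper isometry in $\mathcal{R}_{n}$ satisfying $(L_{0}^{*})^{k}=J(R_{0}^{*})^{k}J$, whence $\|(L_{0}^{*})^{k}X^{*}v\|=\|(R_{0}^{*})^{k}(JX^{*}v)\|$. Applying Lemma \ref{lem:strongly--2} to the single vector $JX^{*}v$ with tolerance $\epsilon/(1+\|\alpha\|)$ produces the required $k$. The one step that requires care is the extension of the intertwining identity $XA=\Phi(A)X$ from the generators to all of $\mathcal{L}_{n}$; everything else is a direct computation, and the transfer from $\mathcal{L}_{n}$ to $\mathcal{R}_{n}$ through $J$ is essentially formal.
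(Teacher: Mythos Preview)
Your argument is correct. The extension of the intertwining identity $XA=\Phi(A)X$ from generators to all of $\mathcal{L}_{n}$ does go through: for Cesaro partial sums $p_{k}(L)\to A$ strongly, one has $Xp_{k}(L)\xi\to XA\xi$ in norm, while $\Phi(p_{k}(L))\to\Phi(A)$ weak-{*} gives $\Phi(p_{k}(L))X\xi\to\Phi(A)X\xi$ weakly, and the two limits must agree. The rest of your computation and the flip to $\mathcal{R}_{n}$ are fine.

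The paper takes a somewhat different route: instead of invoking an intertwining operator for $u$, it uses the property $\mathbb{A}_{1}$ of $\mathcal{L}_{n}$ to factor the functional $[u\otimes v]_{\mathcal{S}}$ as $\phi([\mu\otimes\nu]_{\mathcal{L}_{n}})$ for some $\mu,\nu\in F_{n}^{2}$, then moves $S^{k}$ across via the predual map $\phi$ to obtain $\|[u\otimes(S^{*})^{k}v]_{\mathcal{S}}\|\leq\|\mu\|\,\|(\Phi^{-1}(S)^{*})^{k}\nu\|$, after which Lemma \ref{lem:strongly--2} is applied exactly as you do. The paper's approach is slightly shorter because it stays entirely within the predual machinery already set up and avoids the step of extending the intertwining relation; your approach is more hands-on and makes explicit use of the intertwining operators (Theorem 2.7 of \cite{DLP05}) that appear elsewhere in Section \ref{sec:abs-cont}. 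Both arguments land on the same final inequality and the same reduction to Lemma \ref{lem:strongly--2}.
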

\begin{proof}
Since $\mathcal{L}_{n}$ has property $\mathbb{A}_{1},$ there are
vectors $\mu$ and $\nu$ in $F_{n}^{2}$ such that $[u\otimes v]_{\mathcal{S}}=\phi([\mu\otimes\nu]_{\mathcal{L}_{n}})$.
Thus for $A$ in $\mathcal{S}$,
\begin{eqnarray*}
|([u\otimes(S^{*})^{k}v]_{\mathcal{S}},A)| & = & |([\mu\otimes(\Phi^{-1}(S)^{*})^{k}\nu]_{\mathcal{L}_{n}},\Phi^{-1}(A))|\\
 & = & |(\Phi^{-1}(A)\mu,(\Phi^{-1}(S)^{*})^{k}\nu)|\\
 & \leq & \|A\|\|\mu\|\|(\Phi^{-1}(S)^{*})^{k}\nu\|,
\end{eqnarray*}
which gives $\|[u\otimes(S^{*})^{k}v]_{\mathcal{S}}\|\leq\|\mu\|\|(\Phi^{-1}(S)^{*})^{k}\nu\|$.
Since $\Phi^{-1}(S)$ is a a proper isometry in $\mathcal{L}_{n}$,
and since $\mathcal{L}_{n}$ and $\mathcal{R}_{n}$ are unitarily
equivalent, the result now follows by Lemma \ref{lem:strongly--2}.
\end{proof}

\begin{lem}
\label{lem:vecs3}Given unit vectors $z_{1},...,z_{q}$ in $H$ and
$\epsilon>0$, there exists a unit vector $x$ in $H$ and vectors
$\xi,\zeta_{1},...,\zeta_{q}$ in $F_{n}^{2}$ such that
\begin{enumerate}
\item $\|\xi\|<\sqrt{q}(1+\epsilon)^{1/2}$,
\item $\|\zeta_{i}\|<(1+\epsilon)^{1/2}$ for $1\leq i\leq q$,
\item $\xi=\|\xi\|L\xi_{\varnothing}=\|\xi\|R\xi_{\varnothing}$, where
$L$ is an isometry in $\mathcal{L}_{n}$, and $R$ is an isometry
in $\mathcal{R}_{n}$ with range orthogonal to the range of $R_{1}$,
\item $\|R^{*}\zeta_{i}\|<\epsilon$ for $1\leq i\leq q$,
\item $|(\Phi(L)^{k}x,x)|<\epsilon$ for $k\geq1$,
\item $\|[x\otimes z_{i}]_{\mathcal{T}}-\phi([\xi\otimes\zeta_{i}]_{\mathcal{M}_{n}})\|<\epsilon$
for $1\leq i\leq q$.
\end{enumerate}
\end{lem}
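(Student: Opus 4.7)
I would prove Lemma \ref{lem:vecs3} by running the proof of Lemma \ref{lem:vecs2} with two additional ingredients: compose the output with extra long-word isometries $L_{u'}$, $R_{v'}$ coming from Lemma \ref{lem:words-isom}, and choose the initial unit vector $x'$ appearing in the proof of Lemma \ref{lem:vecs2} to be of the form $S_{u''}\tilde x$ for some word $u''$ whose first letter differs from the first letter of the word $u = 12^{m}$ built into that proof.

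Fix $\delta > 0$ small. Apply Lemma \ref{lem:vecs2} to $z_1,\ldots,z_q$ with tolerance $\delta$, but with auxiliary vector $x' = S_{u''}\tilde x$, where $\tilde x \in H$ is an arbitrary unit vector and $u''$ is a word beginning with the letter $2$. This produces $\xi'$, $\zeta_1',\ldots,\zeta_q'$ and isometries $L' = \sum_{|w|\le m}(a_w/\|\eta\|)L_{uwv}$, $R' = \sum(a_w/\|\eta\|)R_{uwv}$ with $\xi' = L_u R_v \eta$. Choose $u' = 2^K$ (with $K$ fixed below) and $v'$ ending in the letter $2$, and set
\[
x = S_{u'}x',\qquad \xi = L_{u'}R_{v'}\xi',\qquad \zeta_i = R_{v'}\zeta_i'.
\]
The Toeplitz-commutation computation from the proof of Lemma \ref{lem:vecs2} passes through, so (6) holds with tolerance $\delta$. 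The vector $\xi$ factors as $\|\xi'\|L\xi_{\varnothing} = \|\xi'\|R\xi_{\varnothing}$ for the isometries $L = \sum(a_w/\|\eta\|)L_{u'uwvv'} \in \mathcal{L}_n$ and $R = \sum(a_w/\|\eta\|)R_{u'uwvv'} \in \mathcal{R}_n$ (the isometry property of $L$, $R$ coming from the same calculation as in Lemma \ref{lem:words-isom}); since $L_{u'}$ and $R_{v'}$ are isometries the norm bounds (1), (2) are preserved, and since $v'$ ends in $2$ the range of $R$ is still orthogonal to the range of $R_1$, giving (3).

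For (4), the Cuntz relations together with $R_{v'}^*R_{v'} = I$ give $R^*\zeta_i = R_{u'}^*\psi_i$, where $\psi_i = \sum(\overline{a_w}/\|\eta\|)R_u^*R_w^*R_v^*\zeta_i'$ is independent of $u'$. Apply Lemma \ref{lem:strongly--2} to the proper isometry $R_2 \in \mathcal{R}_n$ and the finite family $\psi_1,\ldots,\psi_q$ to produce $K$ with $\|R_2^{*K}\psi_i\| < \epsilon$; for $u' = 2^K$ this is exactly $\|R^*\zeta_i\| = \|R_{u'}^*\psi_i\| < \epsilon$. For (5), each summand of $(\Phi(L)^k x,x) = (\Phi(L)^k S_{u'}S_{u''}\tilde x, S_{u'}S_{u''}\tilde x)$ reduces, after cancelling the leading $S_{u'}^*S_{u'} = I$, to an inner product of the form $(S_{uw_1vv'u'\cdots u'uw_kvv'u''}\tilde x, S_{u''}\tilde x)$. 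The word inside the outer $S$ begins with $u_1 = 1$, while $u''$ begins with $2$, so the Cuntz relation $S_{u''_1}^*S_{u_1} = S_2^*S_1 = 0$ forces every summand to vanish identically; hence $(\Phi(L)^k x,x) = 0 < \epsilon$ for every $k \geq 1$.

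The main thing requiring care is satisfying the combinatorial boundary-letter constraints (first letters of $u''$, $u'$ equal to $2$; last letter of $v'$ equal to $2$) \emph{simultaneously} with the large-$K$ decay requirement on $u' = 2^K$ coming from Lemma \ref{lem:strongly--2}. Because the first set of constraints is local while the second is asymptotic, there is no conflict and all six conditions can be arranged at once. Lemma \ref{lem:weakly} is not needed for the approach above; it could alternatively be used, with $S = \Phi(L)$ and $u = v = x$, to obtain the bound in (5) for all $k$ above a threshold, after which a variant of the algebraic argument above handles the finitely many remaining small values of $k$. A final choice of $\delta$ small enough for the cumulative approximation error in (6) to remain below $\epsilon$ completes the proof.
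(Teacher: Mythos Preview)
There is a genuine gap in your verification of condition~(6). You take $x'=S_{u''}\tilde x$ as the \emph{auxiliary} vector in the proof of Lemma~\ref{lem:vecs2}, so that the output of that lemma is the triple $(S_u x',\,\xi',\,\zeta_i')$ with $\xi'=L_uR_v\eta$ and $\zeta_i'=R_v\zeta_i''$. You then set $x=S_{u'}x'$ and $\xi=L_{u'}R_{v'}\xi'$, $\zeta_i=R_{v'}\zeta_i'$. Unwinding via the Toeplitz relation gives
\[
\phi([\xi\otimes\zeta_i]_{\mathcal M_n})(T)=\phi([\eta\otimes\zeta_i'']_{\mathcal M_n})(TS_{u'}S_u),
\qquad
[x\otimes z_i]_{\mathcal T}(T)=[x'\otimes z_i]_{\mathcal T}(TS_{u'}),
\]
so the difference at $T$ is $[x'\otimes z_i]_{\mathcal T}(TS_{u'})-\phi([\eta\otimes\zeta_i'']_{\mathcal M_n})(TS_{u'}S_u)$. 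The only estimate available from Lemma~\ref{lem:vecs2} is $\|[x'\otimes z_i]_{\mathcal T}-\phi([\eta\otimes\zeta_i'']_{\mathcal M_n})\|<\delta$, which controls the two functionals at the \emph{same} argument; the extra $S_u$ on one side cannot be absorbed, and the discrepancy $\phi([\eta\otimes\zeta_i'']_{\mathcal M_n})(T'(I-S_u))$ is of order $\|\eta\|\,\|\zeta_i''\|$, not $\delta$. In short, you have dropped the factor $S_u$ on the $H$--side while keeping the matching factor $L_u$ on the Fock side, and this mismatch is exactly what breaks~(6).

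This is not merely a bookkeeping slip: the combinatorial argument you use for~(5) \emph{requires} the mismatch. Your vanishing in~(5) comes from the fact that the word attached to $x$ (namely $u'u''$) and the common prefix of the $L$--words (namely $u'u$) disagree immediately after $u'$. But the Toeplitz computation behind~(6) forces the word on $x$ to coincide with the left word on $\xi$; if you repair~(6) by taking $x=S_{u'u}x'$, then after cancelling $S_{u'u}$ you are left with $(S_{w_1vv'u'u\cdots}\tilde x,\tilde x)$, and since $w_1$ can begin with any letter (or be empty) the Cuntz--orthogonality argument no longer yields zero. The paper resolves this tension differently: it keeps the words matched by setting $x=S_1^m x'$, $\xi=L_1^m\xi'$, and obtains~(5) analytically via Lemma~\ref{lem:weakly}, using that $\|[x'\otimes(S_1^*)^m\Phi(L')^*x']_{\mathcal S}\|$ can be made small for $m$ large. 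Your closing remark that Lemma~\ref{lem:weakly} ``could alternatively be used'' is in fact the route that works; the purely combinatorial route, as written, does not.
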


\begin{proof}
By Lemma \ref{lem:vecs2}, there exists a unit vector $x'$ in $H$
and vectors $\xi',\zeta_{1},...,\zeta_{q}$ in $F_{n}^{2}$ such that
\begin{enumerate}
\item $\|\xi'\|<\sqrt{q}(1+\epsilon)^{1/2}$,
\item $\|\zeta_{i}\|<(1+\epsilon)^{1/2}$ for $1\leq i\leq q$,
\item $\xi'=\|\xi'\|L'\xi_{\varnothing}=\|\xi'\|R'\xi_{\varnothing}$, where
$L'$ is an isometry in $\mathcal{L}_{n}$ and $R'$ is an isometry
in $\mathcal{R}_{n}$with the range of $R'$ orthogonal to the range
of $R_{1}$, 
\item $\|[x'\otimes z_{i}]_{\mathcal{T}}-\phi([\xi'\otimes\zeta_{i}]_{\mathcal{M}_{n}})\|<\epsilon$
for $1\leq i\leq q$.
\end{enumerate}
By Lemma \ref{lem:strongly--2} and Lemma \ref{lem:weakly}, there
exists $m\geq1$ such that $\|(R_{1}^{*})^{m}(R')^{*}\zeta_{i}\|<\epsilon$
for $1\leq i\leq q$ and $\|[x'\otimes(S_{1}^{*})^{m}\Phi(L')^{*}x']_{\mathcal{S}}\|<\epsilon$.
Set $\xi=L_{1}^{m}\xi'$, $L=L_{1}^{m}L'$ and $R=R'R_{1}^{m}$. Then
$\xi=\|\xi\|L\xi_{\varnothing}=\|\xi\|R\xi_{\varnothing}$, $L$ is
an isometry in $\mathcal{L}_{n}$, and $R$ is an isometry in $\mathcal{R}_{n}$
with range orthogonal to the range of $R_{1}$. For $1\leq i\leq q$,
this gives $\|R^{*}\zeta_{i}\|=\|(R_{1}^{*})^{m}(R')^{*}\zeta_{i}\|<\epsilon$.

Let $x=S_{1}^{m}x'$. Then for $k\geq1$, we compute 
\begin{eqnarray*}
|(\Phi(L)^{k}x,x)| & = & |(\Phi(L_{1}^{m}L')^{k}S_{1}^{m}x',S_{1}^{m}x')|\\
 & = & |(S_{1}^{m}\Phi(L'L_{1}^{m})^{k}x',S_{1}^{m}x')|\\
 & = & |(\Phi(L'L_{1}^{m})^{k}x',x')|\\
 & = & |(\Phi(L'L_{1}^{m})^{k-1}x',(S_{1}^{*})^{m}\Phi(L')^{*}x')|\\
 & = & |([x'\otimes(S_{1}^{*})^{m}\Phi(L')^{*}x']_{\mathcal{S}},\Phi(L'L_{1}^{m})^{k-1})|\\
 & \leq & \|[x'\otimes(S_{1}^{*})^{m}\Phi(L')^{*}x']_{\mathcal{S}}\|\|(L'L_{1}^{m})^{k-1}\|\\
 & < & \epsilon.
\end{eqnarray*}
Finally, for $T$ in $\mathcal{T}$ we have 
\begin{eqnarray*}
|([x\otimes z_{i}]_{\mathcal{T}}-\phi([\xi\otimes\zeta_{i}]_{\mathcal{M}_{n}}),T)| & = & |([x'\otimes z_{i}]_{\mathcal{T}}-\phi([\xi'\otimes\zeta_{i}]_{\mathcal{M}_{n}}),TS_{1}^{m})|\\
 & \leq & \|[x'\otimes z_{i}]_{\mathcal{T}}-\phi([\xi'\otimes\zeta_{i}]_{\mathcal{M}_{n}}\|\|TS_{1}^{m}\|\\
 & < & \epsilon\|T\|.
\end{eqnarray*}
Thus $\|[x\otimes z_{i}]_{\mathcal{T}}-\phi([\xi\otimes\zeta_{i}]_{\mathcal{M}_{n}})\|<\epsilon$.
\end{proof}

\subsection{Approximately orthogonal vectors}

The following lemma is extracted from the proof of Theorem 4.3 in
\cite{Ber98}.
\begin{lem}
\label{lem:bercovici--2} Given two isometries $R$ and $R'$ in $\mathcal{R}_{n}$
with orthogonal ranges and vectors $\xi$ and $\mu$ in $F_{n}^{2}$
with $\mu$ in the kernel of $R^{*}$, define
\[
\mu_{k}=\frac{1}{\sqrt{k}}\sum_{j=1}^{k}R^{j}R'\mu.
\]
Then 
\[
\|[\xi\otimes\mu_{k}]_{\mathcal{M}_{n}}\|\leq\frac{1}{\sqrt{k}}\|\mu\|\|D_{k}-1\|_{1},
\]
where $D_{k}$ denotes the $k$-th Dirichlet kernel and $\|\cdot\|_{1}$
denotes the $L^{1}$ norm. 
\end{lem}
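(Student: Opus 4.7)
The plan is to estimate $|(A\xi,\mu_k)|$ uniformly over $A\in\mathcal{M}_n$ with $\|A\|\le 1$. The first observation is that, because the ranges of $R$ and $R'$ are orthogonal, $R'\mu\in\ker R^*$; consequently the vectors $\{R^jR'\mu:j\ge 0\}$ are pairwise orthogonal with common norm $\|\mu\|$. The cyclic $R$-invariant subspace $M\subset F_n^2$ generated by $R'\mu$ is then identified, via the Wold decomposition of the pure isometry $R|_M$, with the classical $H^2(\mathbb{T})$ by the unitary $R^jR'\mu\mapsto\|\mu\|z^j$, under which $R|_M$ becomes the shift $M_z$. In particular, $\sum_{j=1}^k R^jR'\mu$ corresponds to $\|\mu\|p_k(z)$ with $p_k(z)=z+z^2+\cdots+z^k$. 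Note that the naive Cauchy--Schwarz bound only gives $|(A\xi,\mu_k)|\le\|\xi\|\|\mu\|$, so the improvement by the factor $\|D_k-1\|_1/\sqrt k$ must come from the Toeplitz-type structure of $A$.

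Following the argument in the proof of Theorem~4.3 of \cite{Ber98}, the key step is to use the characterization $R_i^*AR_j=\delta_{ij}A$ of elements of $\mathcal{M}_n$ to show that the antilinear functional $\eta\mapsto(A\xi,\eta)$ restricted to $M$ is given by integration against an $L^\infty(\mathbb{T})$-symbol $f_A$ of sup-norm at most $\|A\|\|\xi\|$ (rather than merely an $L^2$-symbol). This is the noncommutative Toeplitz assertion and it exploits the decomposition $\mathcal{M}_n=\overline{\mathcal{L}_n+\mathcal{L}_n^*}^{\mathrm{w}^*}$ to control the positive- and negative-frequency contributions separately against $L^1$. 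Granting this, the Dirichlet-kernel estimate is routine harmonic analysis: since $p_k+\overline{p_k}=D_k-1$ one has $\|p_k\|_1\le\|D_k-1\|_1$, and hence
\[
\Big|\sum_{j=1}^k(A\xi,R^jR'\mu)\Big| = \|\mu\|\,\Big|\langle f_A,p_k\rangle_{L^2(\mathbb{T})}\Big| \le \|\mu\|\|f_A\|_\infty\|p_k\|_1 \le \|A\|\|\xi\|\|\mu\|\,\|D_k-1\|_1.
\]
Dividing by $\sqrt k$ and taking the supremum over $A\in\mathcal{M}_n$ of norm one yields the stated inequality.

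The main obstacle is establishing the $L^\infty$-symbol representation of the pairing $\eta\mapsto(A\xi,\eta)$ on $M$. In the commutative setting this is the classical Brown--Halmos characterization of Toeplitz operators by bounded symbols, derived from the spectral theorem for the unitary boundary extension; in the present setting the unitary is not available and the statement must instead be extracted directly from the defining relations $R_i^*AR_j=\delta_{ij}A$ together with the pure shift structure of $R|_M$. This is precisely the calculation Bercovici carries out, by direct manipulation of partial sums of shift powers, in his original proof of \cite[Theorem~4.3]{Ber98}; the extraction here goes through verbatim because $R|_M$ is a genuine unilateral shift and no higher-dimensional spectral theory is required.
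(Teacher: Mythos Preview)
The paper offers no proof of this lemma beyond the sentence ``extracted from the proof of Theorem~4.3 in \cite{Ber98}''. Your sketch is therefore already more detailed than what the paper provides, and it follows the same route: reduce to Bercovici's single-isometry Dirichlet-kernel estimate. The reduction is correct---since $\mathcal{L}_n$ commutes with $\mathcal{R}_n$, every $A\in\mathcal{M}_n$ satisfies $R^*AR=A$, so $\mathcal{M}_n$ sits inside the abstract Toeplitz system $\{T:R^*TR=T\}$ for the isometry $R$, and Bercovici's estimate applies with wandering vector $R'\mu\in\ker R^*$.

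Two small points. First, your bound carries the factor $\|\xi\|$; this is absent from the lemma as printed but is clearly a typo, since the application in Lemma~\ref{lem:approx-orthog} uses the bound with both $\|\xi\|$ and $\|\mu\|$. Second, the implication ``$p_k+\overline{p_k}=D_k-1$, hence $\|p_k\|_1\le\|D_k-1\|_1$'' is not justified by that identity alone: the triangle inequality gives only $\|D_k-1\|_1\le 2\|p_k\|_1$, the wrong direction. In Bercovici's actual computation the kernel $D_k-1$ appears directly (via pairing the self-adjoint Toeplitz symbol against $p_k+\overline{p_k}$) rather than through an intermediate bound on $\|p_k\|_1$; your $L^\infty$-symbol heuristic is the right picture, but the last display should be organized so that $D_k-1$, not $p_k$, is what gets estimated in $L^1$.
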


\begin{lem}
\label{lem:approx-orthog}Given unit vectors $z_{1},...,z_{q}$ in
$H$ and $\epsilon>0$, there exists a unit vector $x$ in $H$ such
that $\|[x\otimes z_{i}]_{\mathcal{T}}\|<\epsilon$ for $1\leq i\leq q$.\end{lem}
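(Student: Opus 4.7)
The plan is to apply Lemma~\ref{lem:vecs3} to obtain a structured unit vector $x_{0}$ together with vectors $\xi_{0}, \zeta_{0,i}$ in $F_{n}^{2}$, and then to form a new unit vector $x \in H$ by averaging iterates of a carefully chosen isometry in $\mathcal{S}$. The averaging will be engineered so that, under the predual correspondence $\phi$, the averaged $H$-vector translates on the $F_{n}^{2}$ side to a vector in exact Bercovici form, after which Lemma~\ref{lem:bercovici--2} delivers the small norm.

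Choose an integer $k$ large (depending on $\epsilon$ and $q$) followed by a small $\delta > 0$ (depending on $k$ and $\epsilon$), and apply Lemma~\ref{lem:vecs3} with parameter $\delta$ and the unit vectors $z_{1},\dots,z_{q}$ to obtain $x_{0}, \xi_{0}, \zeta_{0,1},\dots,\zeta_{0,q}$. Write $L_{0} \in \mathcal{L}_{n}$ and $R_{0} \in \mathcal{R}_{n}$ for the isometries with $\xi_{0} = \|\xi_{0}\| L_{0}\xi_{\varnothing} = \|\xi_{0}\| R_{0}\xi_{\varnothing}$. Since $R_{0}$ is constructed as in Lemma~\ref{lem:words-isom} with Fourier series supported on nonempty words, $\xi_{\varnothing} \in \ker R_{0}^{*}$, and the range of $R_{0}$ is orthogonal to the range of $R_{1}$. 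Define
\[
x_{k} = \frac{1}{\sqrt{k}} \sum_{j=0}^{k-1} \Phi(L_{1}L_{0}^{j}) x_{0}.
\]
Using $\Phi(L_{1})^{*}\Phi(L_{1}) = I$ and condition $(5)$ of Lemma~\ref{lem:vecs3} (which gives $|(\Phi(L_{0})^{m} x_{0}, x_{0})| < \delta$ for $m \geq 1$), the cross terms collapse to $\Phi(L_{0})^{l-j}$, yielding $\|x_{k}\|^{2} = 1 + O(k\delta)$.

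By the $\mathcal{L}_{n}$-bimodule property of $\Phi$ together with condition $(6)$, a term-by-term comparison gives
\[
\bigl\|[x_{k} \otimes z_{i}]_{\mathcal{T}} - \phi\bigl([\xi_{k} \otimes \zeta_{0,i}]_{\mathcal{M}_{n}}\bigr)\bigr\| \leq \sqrt{k}\,\delta,
\]
where $\xi_{k} = \frac{1}{\sqrt{k}} \sum_{j=0}^{k-1} L_{1} L_{0}^{j} \xi_{0}$. The crucial commutation is $L_{0}\xi_{\varnothing} = R_{0}\xi_{\varnothing}$; since $\mathcal{L}_{n}$ commutes with $\mathcal{R}_{n}$, induction gives $L_{0}^{j}\xi_{\varnothing} = R_{0}^{j}\xi_{\varnothing}$, whence $L_{1}L_{0}^{j+1}\xi_{\varnothing} = L_{1}R_{0}^{j+1}\xi_{\varnothing} = R_{0}^{j+1} L_{1}\xi_{\varnothing} = R_{0}^{j+1} R_{1} \xi_{\varnothing}$. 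Therefore
\[
\xi_{k} = \|\xi_{0}\| \cdot \frac{1}{\sqrt{k}} \sum_{j=1}^{k} R_{0}^{j} R_{1} \xi_{\varnothing},
\]
which is exactly $\|\xi_{0}\|$ times the Bercovici sum of Lemma~\ref{lem:bercovici--2} with $R = R_{0}$, $R' = R_{1}$, and $\mu = \xi_{\varnothing}$. The hypotheses of Lemma~\ref{lem:bercovici--2} are in force, so combining with the symmetry $\|[\nu \otimes \omega]_{\mathcal{M}_{n}}\| = \|[\omega \otimes \nu]_{\mathcal{M}_{n}}\|$ (from the adjoint-invariance of $\mathcal{M}_{n}$) yields $\|[\xi_{k} \otimes \zeta_{0,i}]_{\mathcal{M}_{n}}\| \leq \|\xi_{0}\|\|D_{k}-1\|_{1}/\sqrt{k} = O(\sqrt{q}\,\log k /\sqrt{k})$.

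Combining the two bounds and using that $\phi$ is isometric, $\|[x_{k} \otimes z_{i}]_{\mathcal{T}}\| \leq \sqrt{k}\,\delta + O(\sqrt{q}\,\log k /\sqrt{k})$. Choose $k$ so that the second term is below $\epsilon/4$, then $\delta$ so that $\sqrt{k}\,\delta < \epsilon/4$ and $k\delta$ is small enough to ensure $\|x_{k}\| \geq 1/2$, and set $x = x_{k}/\|x_{k}\|$; then $x$ is a unit vector with $\|[x \otimes z_{i}]_{\mathcal{T}}\| < \epsilon$ for each $i$. The heart of the argument, and the main obstacle to overcome, is identifying the correct averaging operator: the factor $L_{0}^{j}$ in $L_{1}L_{0}^{j}$ generates the Bercovici exponents $R_{0}^{j}$ through the identity $L_{0}\xi_{\varnothing} = R_{0}\xi_{\varnothing}$, while the leading $L_{1}$ introduces the shift by $R_{1}$ needed to make $R'\mu = R_{1}\xi_{\varnothing}$ orthogonal to the range of $R_{0}$. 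Once this alignment is in place, the rest is bookkeeping with constants.
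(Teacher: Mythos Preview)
Your outline is essentially the paper's argument: apply Lemma~\ref{lem:vecs3}, average against $\Phi(L_{1}L_{0}^{j})$, rewrite the averaged Fock-space vector in Bercovici form, invoke Lemma~\ref{lem:bercovici--2}, and control the norm of $x_{k}$ via condition~(5). The transfer estimate $\|[x_{k}\otimes z_{i}]_{\mathcal T}-\phi([\xi_{k}\otimes\zeta_{0,i}]_{\mathcal M_{n}})\|\le\sqrt{k}\,\delta$, the rewriting of $\xi_{k}$ as $\|\xi_{0}\|\,k^{-1/2}\sum_{j=1}^{k}R_{0}^{j}R_{1}\xi_{\varnothing}$, and the lower bound on $\|x_{k}\|$ all match the paper.

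There is one genuine gap. You apply Lemma~\ref{lem:bercovici--2} with the pairing vector taken to be $\zeta_{0,i}$ itself and deduce $\|[\xi_{k}\otimes\zeta_{0,i}]_{\mathcal M_{n}}\|\le\|\xi_{0}\|\,k^{-1/2}\|D_{k}-1\|_{1}$. But such a bound cannot hold for an \emph{arbitrary} pairing vector: if one pairs the Bercovici sum $\mu_{k}$ against itself and evaluates at $I\in\mathcal M_{n}$, one gets $\|[\mu_{k}\otimes\mu_{k}]_{\mathcal M_{n}}\|\ge\|\mu_{k}\|^{2}=\|\mu\|^{2}$, which does not tend to zero. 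The Bercovici estimate, as the paper actually uses it, requires the vector paired against the sum to lie in $\ker R^{*}$. This is precisely the purpose of condition~(4) of Lemma~\ref{lem:vecs3}, which you never invoke. The paper splits $\zeta_{i}=\mu_{i}+\nu_{i}$ with $\mu_{i}\in\ker R^{*}$ and $\|\nu_{i}\|<\epsilon'$, applies Lemma~\ref{lem:bercovici--2} to $[\xi\otimes\mu_{i}]_{\mathcal M_{n}}$, and disposes of $[\xi\otimes\nu_{i}]_{\mathcal M_{n}}$ by the trivial bound $\|\xi\|\|\nu_{i}\|$. Inserting this splitting into your argument (and adjusting the choice of $\delta$ to absorb the extra $\sqrt{q}\,\delta$ term) repairs the proof, after which it coincides with the paper's.
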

\begin{proof}
We may suppose that $\epsilon<1$. Using the fact that $\lim k^{-1/2}\|D_{k}\|_{1}=0$,
where $D_{k}$ denotes the $k$-th Dirichlet kernel and $\|\cdot\|_{1}$
denotes the $L^{1}$ norm, choose $k\geq1$ such that $2(q/k)^{-1/2}\|D_{k}-1\|_{1}<\epsilon/(3(1+\epsilon)$).
Next choose $\epsilon'>0$ such that 
\[
\epsilon'<\min\left\{ 1,\frac{\epsilon(1-\epsilon)}{3\sqrt{k}},\frac{\epsilon(1-\epsilon)}{6\sqrt{q}},\frac{k\epsilon}{k^{2}-k}\right\} .
\]
By Lemma \ref{lem:vecs3}, there exists a unit vector $x'$ in $H$
and vectors $\xi',\zeta_{1},...,\zeta_{q}$ in $F_{n}^{2}$ such that
\begin{enumerate}
\item $\|\xi'\|<\sqrt{q}(1+\epsilon')^{1/2}$,
\item $\|\zeta_{i}\|<(1+\epsilon')^{1/2}$ for $1\leq i\leq q$,
\item \label{enu:approx-orthog-3}$\xi'=\|\xi'\|L\xi_{\varnothing}=\|\xi'\|R\xi_{\varnothing}$,
where $L$ is an isometry in $\mathcal{L}_{n}$, and $R$ is an isometry
in $\mathcal{R}_{n}$ with range orthogonal to the range of $R_{1}$,
\item \label{enu:approx-orthog-4}$\|R^{*}\zeta_{i}\|<\epsilon'$ for $1\leq i\leq q$,
\item $|(\Phi(L)^{k}x',x')|<\epsilon'$ for $k\geq1$,
\item $\|[x'\otimes z_{i}]_{\mathcal{T}}-\phi([\xi'\otimes\zeta_{i}]_{\mathcal{M}_{n}})\|<\epsilon'$
for $1\leq i\leq q$.
\end{enumerate}
By \eqref{enu:approx-orthog-4} we can write $\zeta_{i}=\mu_{i}+\nu_{i}$,
where $\mu_{i}$ is in the kernel of $R^{*}$ and $\|\nu_{i}\|<\epsilon'$.

Let $\xi=k^{-1/2}\sum_{j=0}^{k-1}L_{1}L^{j}\xi'$. Then by \eqref{enu:approx-orthog-3}
we can write $\xi$ as
\begin{eqnarray*}
\xi & = & \frac{1}{\sqrt{k}}\sum_{j=0}^{k-1}L_{1}L^{j}\xi'\\
 & = & \frac{\|\xi'\|}{\sqrt{k}}\sum_{j=0}^{k-1}L_{1}L^{j+1}\xi_{\varnothing}\\
 & = & \frac{\|\xi'\|}{\sqrt{k}}\sum_{j=1}^{k}L_{1}L^{j}\xi_{\varnothing},
\end{eqnarray*}
which implies $\|\xi\|=\|\xi'\|$. Applying \eqref{enu:approx-orthog-3}
again, we can also write $\xi$ as
\begin{eqnarray*}
\xi & = & \frac{1}{\sqrt{k}}\sum_{j=0}^{k-1}L_{1}L^{j}\xi'\\
 & = & \frac{\|\xi'\|}{\sqrt{k}}\sum_{j=0}^{k-1}L_{1}L^{j}R\xi_{\varnothing}\\
 & = & \frac{\|\xi'\|}{\sqrt{k}}\sum_{j=0}^{k-1}R^{j+1}R_{1}\xi_{\varnothing}\\
 & = & \frac{\|\xi'\|}{\sqrt{k}}\sum_{j=1}^{k}R^{j}R_{1}\xi_{\varnothing}.
\end{eqnarray*}
By Lemma \ref{lem:bercovici--2} and the choice of $k$, this gives
\begin{eqnarray*}
\|[\xi\otimes\mu_{i}]_{\mathcal{M}_{n}}\| & \leq & \frac{1}{\sqrt{k}}\|\xi\|\|\mu_{i}\|\|D_{k}-1\|_{1}\\
 & \leq & \sqrt{\frac{q}{k}}(1+\epsilon')^{1/2}\|\mu_{i}\|\|D_{k}-1\|_{1}\\
 & < & \epsilon(1-\epsilon)/3.
\end{eqnarray*}

Let $y=Sx'$, where $S=k^{-1/2}\sum_{j=0}^{k-1}\Phi(L_{1}L^{j})$.
Then $\|S\|\leq\sqrt{k}$, so for $T$ in $\mathcal{T}$,

\begin{eqnarray*}
|([y\otimes z_{i}]_{\mathcal{T}}-\phi([\xi\otimes\zeta_{i}]_{\mathcal{M}_{n}}),T)| & = & |([x'\otimes z_{i}]_{\mathcal{T}}-\phi([\xi'\otimes\zeta_{i}]_{\mathcal{M}_{n}}),TS)|\\
 & \leq & \|[x'\otimes z_{i}]_{\mathcal{T}}-\phi([\xi'\otimes\zeta_{i}]_{\mathcal{M}_{n}})\|\|TS\|\\
 & < & \epsilon'\sqrt{k}\|T\|\\
 & < & (\epsilon(1-\epsilon)/3)\|T\|,
\end{eqnarray*}
which gives $\|[y\otimes z_{i}]_{\mathcal{T}}-\phi([\xi\otimes\zeta_{i}]_{\mathcal{M}_{n}})\|<\epsilon(1-\epsilon)/3$.
Since 
\[
\|[\xi\otimes\nu_{i}]_{\mathcal{M}_{n}}\|\leq\|\xi\|\|\nu_{i}\|<\sqrt{q}(1+\epsilon')^{1/2}\epsilon'<\epsilon(1-\epsilon)/3,
\]
this gives
\begin{eqnarray*}
\|[y\otimes z_{i}]_{\mathcal{T}}\| & \leq & \|[y\otimes z_{i}]_{\mathcal{T}}-\phi([\xi\otimes\zeta_{i}]_{\mathcal{M}_{n}})\|+\|[\xi\otimes\mu_{i}]_{\mathcal{M}_{n}}\|+\|[\xi\otimes\nu_{i}]_{\mathcal{M}_{n}}\|\\
 & < & \epsilon(1-\epsilon).
\end{eqnarray*}
Finally, we compute
\begin{eqnarray*}
\|y\|^{2} & = & \|Sx'\|^{2}\\
 & = & \left\Vert \frac{1}{\sqrt{k}}\sum_{j=0}^{k-1}\Phi(L_{1}L^{j})x'\right\Vert ^{2}\\
 & = & \|x'\|^{2}+\frac{1}{k}\sum_{0\leq i<j\leq k-1}(x',\Phi(L)^{j-i}x')+\frac{1}{k}\sum_{0\leq j<i\leq k-1}(\Phi(L)^{i-j}x',x')\\
 & \geq & 1-\frac{1}{k}\sum_{0\leq i<j\leq k-1}|(x',\Phi(L)^{j-i}x')|-\frac{1}{k}\sum_{0\leq j<i\leq k-1}|(\Phi(L)^{i-j}x',x')|\\
 & \geq & 1-\frac{k^{2}-k}{k}\epsilon'.\\
 & > & 1-\epsilon.
\end{eqnarray*}
Hence taking $x=(1-\epsilon)^{-1}y$, $\|x\|\geq1$ and $\|[x\otimes z_{i}]_{\mathcal{T}}\|<\epsilon$
for $1\leq i\leq q$. 
\end{proof}

\begin{lem}
\label{lem:approx-orthog-inter}Given unit vectors $z_{1},...,z_{q}$
in $H$ and $\epsilon>0$, there exists an intertwining operator $X:F_{n}^{2}\to H$
such that $\|X\xi_{\varnothing}\|=1$ and $\|[X\xi_{\varnothing}\otimes z_{i}]_{\mathcal{T}}\|<\epsilon$
for $1\leq i\leq q$.\end{lem}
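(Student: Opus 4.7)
The plan is to combine Lemma~\ref{lem:approx-orthog}, which already produces a unit vector $x'\in H$ with $\|[x'\otimes z_i]_{\mathcal{T}}\|$ as small as we please, with the intertwining fact recalled in the \emph{Intertwining operators} subsection (Theorem~2.7 of \cite{DLP05}): since $S$ is absolutely continuous, every vector in $H$ lies in the range of an operator intertwining $S$ and $L$.

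First I would apply Lemma~\ref{lem:approx-orthog} with tolerance $\epsilon/2$ to obtain a unit vector $x'\in H$ with $\|[x'\otimes z_i]_{\mathcal{T}}\|<\epsilon/2$ for $1\leq i\leq q$. By absolute continuity, there then exist an operator $X_0:F_n^2\to H$ intertwining $S$ and $L$ and a vector $\eta_0\in F_n^2$ with $X_0\eta_0=x'$. The main step is to trade $\eta_0$ for the vacuum vector $\xi_\varnothing$. Since $\{R_w\xi_\varnothing:w\in\mathbb{F}_n^*\}$ is exactly the orthonormal basis $\{\xi_w\}$ of $F_n^2$, I can choose a noncommutative polynomial $Q$ in $R_1,\ldots,R_n$ with $Q\xi_\varnothing$ approximating $\eta_0$ in norm as closely as desired. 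Because every $R_i$ commutes with every $L_j$, the composition $Y=X_0Q$ still intertwines $S$ and $L$, while $Y\xi_\varnothing=X_0Q\xi_\varnothing$ can be made arbitrarily close to $x'$.

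To finish, set $X=Y/\|Y\xi_\varnothing\|$, which is an intertwiner with $\|X\xi_\varnothing\|=1$. The reverse triangle inequality, using $\|x'\|=1$, gives
\[
\|X\xi_\varnothing-x'\|\leq 2\|Y\xi_\varnothing-x'\|,
\]
so by taking the polynomial approximation of $\eta_0$ fine enough to make the right side less than $\epsilon/2$, we conclude
\[
\|[X\xi_\varnothing\otimes z_i]_{\mathcal{T}}\|\leq\|X\xi_\varnothing-x'\|+\|[x'\otimes z_i]_{\mathcal{T}}\|<\epsilon
\]
for $1\leq i\leq q$, as required.

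I do not expect any serious obstacle here: the substantive work has already been packaged into Lemma~\ref{lem:approx-orthog} and the intertwining theorem from \cite{DLP05}. The only point requiring care is that rescaling $Y\xi_\varnothing$ to unit norm does not spoil the $\mathcal{T}$-estimate, which is cleanly handled by the reverse triangle inequality precisely because $x'$ is itself a unit vector.
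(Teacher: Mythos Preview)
Your proposal is correct and follows essentially the same approach as the paper's proof: both invoke Lemma~\ref{lem:approx-orthog} to get a unit vector with small $\mathcal{T}$-functionals, then use Theorem~2.7 of \cite{DLP05} to realize it as $X_0\eta_0$ for some intertwiner, and finally approximate $\eta_0$ by an element of $\mathcal{R}_n\xi_\varnothing$ so that the composite intertwiner applied to $\xi_\varnothing$ does the job. The paper leaves the normalization and final estimate implicit, whereas you spell them out carefully (and correctly) via the reverse triangle inequality.
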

\begin{proof}
By Lemma \ref{lem:approx-orthog}, there exists a unit vector $x$
in $H$ such that $\|[x\otimes z_{i}]_{\mathcal{T}}\|<\epsilon$ for
$1\leq i\leq q$. By Theorem 2.7 of \cite{DLP05}, $x$ is in the
range of an intertwining operator $X':F_{n}^{2}\to H$. Hence there
is a vector $\xi$ in $F_{n}^{2}$ such that $X'\xi=x$. The result
now follows from the fact that the set of vectors $\{R\xi_{\varnothing}:R\in\mathcal{R}_{n}\}$
is dense in $F_{n}^{2}$, and the fact that for $R$ in $\mathcal{R}_{n}$,
the operator $X'R$ is intertwining.
\end{proof}

\begin{lem}
\label{lem:approx-functional}Let $X:F_{n}^{2}\to H$ be an intertwining
operator with $\|X\xi_{\varnothing}\|=1$. Then given $\epsilon>0$,
there is a word $v$ in $\mathbb{F}_{n}^{*}$ such that 
\[
\|[XR_{v}\xi_{\varnothing}\otimes XR_{v}\xi_{\varnothing}]_{\mathcal{T}}-\phi([\xi_{\varnothing}\otimes\xi_{\varnothing}]_{\mathcal{M}_{n}})\|<\epsilon.
\]
\end{lem}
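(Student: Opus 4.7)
The plan is to reduce to a norm estimate in $\mathcal{M}_{n*}$: since $\phi$ is an isometry of preduals, the quantity to estimate equals $\|\sigma_v - \omega_{\xi_\varnothing}\|_{\mathcal{M}_{n*}}$, where $\sigma_v(A) = (\Phi(A)XR_v\xi_\varnothing, XR_v\xi_\varnothing)$ and $\omega_{\xi_\varnothing}(A) = (A\xi_\varnothing, \xi_\varnothing)$ for $A \in \mathcal{M}_n$. Set $P_v = R_v^*X^*XR_v$ and $\zeta = P_v\xi_\varnothing - \xi_\varnothing$. Using the intertwining identity $\Phi(B)X = XB$ on $\mathcal{L}_n$, the commutation of $\mathcal{L}_n$ with $\mathcal{R}_n$, and self-adjointness of $X^*X$, a direct computation yields $(\sigma_v - \omega_{\xi_\varnothing})(A) = (A\xi_\varnothing, \zeta)$ for $A \in \mathcal{L}_n$ and $(\sigma_v - \omega_{\xi_\varnothing})(A) = (A\zeta, \xi_\varnothing)$ for $A \in \mathcal{L}_n^*$. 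Because $XR_v\xi_\varnothing = S_v X\xi_\varnothing$ has norm $1$, one gets $(P_v\xi_\varnothing, \xi_\varnothing) = 1$ and hence $\zeta \perp \xi_\varnothing$.

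Next, consider the rank-two trace-class operator $K = \xi_\varnothing \otimes \zeta^* + \zeta \otimes \xi_\varnothing^*$ on $F_n^2$, so that $\operatorname{tr}(AK) = (A\xi_\varnothing, \zeta) + (A\zeta, \xi_\varnothing)$. The unwanted cross terms vanish because $A^*\xi_\varnothing \in \mathbb{C}\xi_\varnothing$ whenever $A \in \mathcal{L}_n$, and symmetrically for $A \in \mathcal{L}_n^*$: combined with $\zeta \perp \xi_\varnothing$ this forces $(A\zeta, \xi_\varnothing) = 0$ on $\mathcal{L}_n$ and $(A\xi_\varnothing, \zeta) = 0$ on $\mathcal{L}_n^*$. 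Thus $\operatorname{tr}(AK) = (\sigma_v - \omega_{\xi_\varnothing})(A)$ on the weak-$*$ dense subspace $\mathcal{L}_n + \mathcal{L}_n^*$ and, by weak-$*$ continuity, on all of $\mathcal{M}_n$. Since $\xi_\varnothing \perp \zeta$, the nonzero singular values of $K$ are each equal to $\|\zeta\|$, and so $\|\sigma_v - \omega_{\xi_\varnothing}\|_{\mathcal{M}_{n*}} \leq \|K\|_1 = 2\|\zeta\|$.

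It remains to make $\|\zeta\|$ small by choosing $v$. Expanding $X^*X\xi_v$ against the orthonormal basis via $(X\xi_v, X\xi_u) = (S_u^*S_v y_0, y_0)$ with $y_0 = X\xi_\varnothing$, and then applying $R_v^*$ (which annihilates the contributions from proper prefixes of $v$), I use the combinatorial fact that for a primitive word $v$ the equation $vu' = u''v$ in $\mathbb{F}_n^*$ forces $u' = u'' = v^m$ to obtain
\[
\zeta = \sum_{m \geq 1}(y_0, S_v^m y_0)\,\xi_{v^m}, \qquad \|\zeta\|^2 = \sum_{m \geq 1}|\alpha_{v^m}|^2,
\]
where $\alpha_u = (X^*X\xi_\varnothing, \xi_u)$. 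Since every nonempty word is uniquely a positive power of a primitive word,
\[
\sum_{\substack{v \text{ primitive} \\ |v| = L}} \|\zeta\|^2 \;\leq\; \sum_{u \neq \varnothing}|\alpha_u|^2 \;=\; \|X^*X\xi_\varnothing\|^2 - 1 \;\leq\; \|X\|^4 - 1.
\]
For $n \geq 2$ the number of primitive words of length $L$ in $\mathbb{F}_n^*$ tends to infinity with $L$, so a pigeonhole argument yields a primitive $v$ with $\|\zeta\| < \epsilon/2$, giving the desired bound. The main obstacle is the explicit identification of $\zeta$ via the word-equation combinatorics, which is precisely where the hypothesis $n \geq 2$ enters (for $n = 1$ there is only one primitive word, so no useful averaging over $v$ is available).
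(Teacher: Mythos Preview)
Your reduction is correct and is exactly what the paper does: writing $\zeta=R_v^*X^*XR_v\xi_\varnothing-\xi_\varnothing$, the difference of functionals equals $\phi([\xi_\varnothing\otimes\zeta]_{\mathcal{M}_n}+[\zeta\otimes\xi_\varnothing]_{\mathcal{M}_n})$, so the whole lemma comes down to making $\|\zeta\|$ small. The gap is in your combinatorial step. The claim ``for a primitive word $v$ the equation $vu'=u''v$ forces $u'=u''=v^m$'' is false: take $v=12$, $u'=112$, $u''=121$; then $vu'=12112=u''v$ while neither $u'$ nor $u''$ is a power of $12$. In general the solutions of $u''v=vc$ are exactly the pairs with $u''=ab$, $c=ba$, $v\in(ab)^*a$, so for any $v$ and any $t$ one has $(vt)v=v(tv)$, producing many non-power solutions. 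Consequently your formula $\zeta=\sum_{m\ge1}\alpha_{v^m}\xi_{v^m}$ omits terms; the correct expansion is
\[
\zeta=\sum_{\substack{w\neq\varnothing\\ wv=vc\ \text{for some }c}}\alpha_c\,\xi_w,\qquad \alpha_c=(X^*X\xi_\varnothing,\xi_c),
\]
and $\|\zeta\|^2=\sum_{c\in C_v}|\alpha_c|^2$ where $C_v=\{c\neq\varnothing:\ vc\ \text{ends in}\ v\}$. Since your displayed sum is only a lower bound for $\|\zeta\|^2$, the pigeonhole conclusion does not follow as written.

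That said, your averaging idea is salvageable with the right combinatorics. For fixed $L$ one checks that each nonempty $c$ lies in $C_v$ for at most one word $v$ of length $L$: if $|c|\ge L$ then $v$ is the length-$L$ suffix of $c$; if $|c|<L$ then $wv=vc$ forces $v$ to have period $|c|$ with suffix $c$, which again determines $v$. Hence $\sum_{|v|=L}\|\zeta_v\|^2\le\sum_{c\neq\varnothing}|\alpha_c|^2\le\|X\|^4-1$, and since the number of length-$L$ words (even primitive ones) tends to infinity for $n\ge2$, some $v$ gives $\|\zeta_v\|<\epsilon/2$. This is a genuinely different route from the paper, which simply invokes Lemma~4.5 of \cite{Ken11} to produce $v$ with $\|R_v^*X^*XR_v\xi_\varnothing-\xi_\varnothing\|<\epsilon/2$ and then performs the same functional-calculus estimate you carried out.
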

\begin{proof}
Since $X^{*}X$ is an L-Toeplitz operator, by Lemma 4.5 of \cite{Ken11},
there is a word $v$ in $\mathbb{F}_{n}^{*}$ such that $\|R_{v}^{*}X^{*}XR_{v}\xi_{\varnothing}-\xi_{\varnothing}\|<\epsilon/2$.
Note that $R_{v}^{*}X^{*}XR_{v}$ is also an L-Toeplitz operator.
Let $\xi=(R_{v}^{*}X^{*}XR_{v}-I)\xi_{\varnothing}$, so that $\|\xi\|<\epsilon/2$.
For $w$ in $\mathbb{F}_{n}^{*}$, since $(L_{w}\xi,\xi_{\varnothing})=0$
we can write 
\begin{eqnarray*}
(S_{w}XR_{v}\xi_{\varnothing},XR_{v}\xi_{\varnothing}) & = & (L_{w}\xi_{\varnothing},R_{v}^{*}X^{*}XR_{v}\xi_{\varnothing})\\
 & = & (L_{w}\xi_{\varnothing},\xi_{\varnothing})+(L_{w}\xi_{\varnothing},\xi)+(L_{w}\xi,\xi_{\varnothing}).
\end{eqnarray*}
Similarly,
\begin{eqnarray*}
(S_{w}^{*}XR_{v}\xi_{\varnothing},XR_{v}\xi_{\varnothing}) & = & (L_{w}^{*}R_{v}^{*}X^{*}XR_{v}\xi_{\varnothing},\xi_{\varnothing})\\
 & = & (L_{w}^{*}\xi_{\varnothing},\xi_{\varnothing})+(L_{w}^{*}\xi_{\varnothing},\xi)+(L_{w}^{*}\xi,\xi_{\varnothing}).
\end{eqnarray*}
This gives
\[
[XR_{v}\xi_{\varnothing}\otimes XR_{v}\xi_{\varnothing}]_{\mathcal{T}}=\phi([\xi_{\varnothing}\otimes\xi_{\varnothing}]_{\mathcal{M}_{n}}+[\xi\otimes\xi_{\varnothing}]_{\mathcal{M}_{n}}+[\xi_{\varnothing}\otimes\xi]_{\mathcal{M}_{n}}),
\]
so we conclude that
\begin{eqnarray*}
\|[XR_{v}\xi_{\varnothing}\otimes XR_{v}\xi_{\varnothing}]_{\mathcal{T}}-\phi([\xi_{\varnothing}\otimes\xi_{\varnothing}]_{\mathcal{M}_{n}})\| & \leq & \|[\xi\otimes\xi_{\varnothing}]_{\mathcal{M}_{n}}+[\xi_{\varnothing}\otimes\xi]_{\mathcal{M}_{n}}\|\\
 & \leq & 2\|\xi\|\|\xi_{\varnothing}\|\\
 & < & \epsilon,
\end{eqnarray*}
as required.
\end{proof}

\begin{lem}
\label{lem:approx-combined}Given unit vectors $z_{1},...,z_{q}$
in $H$ and $\epsilon>0$, there exists an intertwining operator $X:F_{n}^{2}\to H$
such that $\|X\xi_{\varnothing}\|=1$, $\|[X\xi_{\varnothing}\otimes z_{i}]_{\mathcal{T}}\|<\epsilon$
for $1\leq i\leq q$ and 
\[
\|[X\xi_{\varnothing}\otimes X\xi_{\varnothing}]_{\mathcal{T}}-\phi([\xi_{\varnothing}\otimes\xi_{\varnothing}]_{\mathcal{M}_{n}})\|<\epsilon.
\]
\end{lem}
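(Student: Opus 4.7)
The plan is to combine the preceding two lemmas and renormalize. First I would apply Lemma \ref{lem:approx-orthog-inter} with $z_1,\ldots,z_q$ and a small parameter $\delta>0$ (to be fixed later in terms of $\epsilon$) to produce an intertwining operator $X':F_n^2\to H$ with $\|X'\xi_\varnothing\|=1$ and $\|[X'\xi_\varnothing\otimes z_i]_\mathcal{T}\|<\delta$ for each $i$. Next I would apply Lemma \ref{lem:approx-functional} to $X'$ with parameter $\delta$ to obtain a word $v\in\mathbb{F}_n^*$ satisfying
\[
\|[X'R_v\xi_\varnothing\otimes X'R_v\xi_\varnothing]_\mathcal{T}-\phi([\xi_\varnothing\otimes\xi_\varnothing]_{\mathcal{M}_n})\|<\delta.
\]
The candidate operator is then $X=c^{-1}X'R_v$ with $c=\|X'R_v\xi_\varnothing\|$; it intertwines $S$ and $L$ because each $L_i$ commutes with $R_v$ on $F_n^2$, and by construction $\|X\xi_\varnothing\|=1$.

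To control $c$, I would use the fact that the proof of Lemma \ref{lem:approx-functional} actually establishes $\|R_v^*(X')^*X'R_v\xi_\varnothing-\xi_\varnothing\|<\delta/2$ en route, so $c^2=(R_v^*(X')^*X'R_v\xi_\varnothing,\xi_\varnothing)$ lies in $(1-\delta/2,\,1+\delta/2)$ and hence $c^{-1}$ is close to $1$ for small $\delta$. For the ``approximate orthogonality'' estimate I would use the intertwining relation to rewrite $X\xi_\varnothing=c^{-1}X'R_v\xi_\varnothing=c^{-1}X'L_v\xi_\varnothing=c^{-1}S_vX'\xi_\varnothing$, and then argue, exactly as in the displayed calculation at the end of the proof of Lemma \ref{lem:vecs2}, that
\[
\|[X\xi_\varnothing\otimes z_i]_\mathcal{T}\|\le c^{-1}\|[X'\xi_\varnothing\otimes z_i]_\mathcal{T}\|<c^{-1}\delta.
\]
For the final estimate I would split
\[
[X\xi_\varnothing\otimes X\xi_\varnothing]_\mathcal{T}-\phi([\xi_\varnothing\otimes\xi_\varnothing]_{\mathcal{M}_n})=c^{-2}\bigl([X'R_v\xi_\varnothing\otimes X'R_v\xi_\varnothing]_\mathcal{T}-\phi([\xi_\varnothing\otimes\xi_\varnothing]_{\mathcal{M}_n})\bigr)+(c^{-2}-1)\phi([\xi_\varnothing\otimes\xi_\varnothing]_{\mathcal{M}_n})
\]
and bound the two summands by $c^{-2}\delta$ (via Lemma \ref{lem:approx-functional}) and $|c^{-2}-1|$ respectively, both of which are $O(\delta)$.

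There is no deep obstacle here; the argument is essentially bookkeeping once the two preceding lemmas are in place. The only point that requires care is to fix $\delta$ small enough at the outset so that the factors of $c^{-1}$ and $|c^{-2}-1|$ introduced by renormalization do not inflate any of the bounds past $\epsilon$.
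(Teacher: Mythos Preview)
Your approach is correct and essentially the same as the paper's. However, the renormalization is unnecessary: your own identity $X'R_v\xi_\varnothing=X'L_v\xi_\varnothing=S_vX'\xi_\varnothing$ already shows that $c=\|S_vX'\xi_\varnothing\|=\|X'\xi_\varnothing\|=1$ exactly, since $S_v$ is an isometry, so you may simply take $X=X'R_v$ without any bookkeeping for~$c$.
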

\begin{proof}
By Lemma \ref{lem:approx-orthog-inter}, there exists an intertwining
operator $X':F_{n}^{2}\to H$ such that $\|X'\xi_{\varnothing}\|=1$
and $\|[X'\xi_{\varnothing}\otimes z_{i}]_{\mathcal{T}}\|<\epsilon$
for $1\leq i\leq q$. By Lemma \ref{lem:approx-functional}, there
is a word $v$ in $\mathbb{F}_{n}^{*}$ such that 
\[
\|[X'R_{v}\xi_{\varnothing}\otimes X'R_{v}\xi_{\varnothing}]_{\mathcal{T}}-\phi([\xi_{\varnothing}\otimes\xi_{\varnothing}]_{\mathcal{M}_{n}})\|<\epsilon.
\]
Let $X=X'R_{v}$. Then 
\[
\|X\xi_{\varnothing}\|=\|X'R_{v}\xi_{\varnothing}\|=\|X'L_{v}\xi_{\varnothing}\|=\|S_{v}X'\xi_{\varnothing}\|=\|X'\xi_{\varnothing}\|=1.
\]
For $T$ in $\mathcal{T}$,
\begin{eqnarray*}
|([X\xi_{\varnothing}\otimes z_{i}]_{\mathcal{T}},T)| & = & |([X'R_{v}\xi_{\varnothing}\otimes z_{i}]_{\mathcal{T}},T)|\\
 & = & |([X'L_{v}\xi_{\varnothing}\otimes z_{i}]_{\mathcal{T}},T)|\\
 & = & |([S_{v}X'\xi_{\varnothing}\otimes z_{i}]_{\mathcal{T}},T)|\\
 & = & |([X'\xi_{\varnothing}\otimes z_{i}]_{\mathcal{T}},TS_{v})|\\
 & \leq & \|[X'\xi_{\varnothing}\otimes z_{i}]_{\mathcal{T}}\|\|T\|.
\end{eqnarray*}
Hence $\|[X\xi_{\varnothing}\otimes z_{i}]_{\mathcal{T}}\|<\epsilon$
for $1\leq i\leq q$. 
\end{proof}

\subsection{The strong factorization property}
\begin{thm}
\label{thm:x01}Given a weak-{*} continuous linear functional $\tau$
on $\mathcal{T}$ with $\|\tau\|\leq1$, unit vectors $z_{1},...,z_{q}$
in $H$ and $\epsilon>0$, there are vectors $x$ and $y$ in $H$
such that
\begin{enumerate}
\item $\|x\|\leq1$ and $\|y\|\leq1$,
\item $\|\tau-[x\otimes y]_{\mathcal{T}}\|<\epsilon$,
\item $\|[x\otimes z_{i}]_{\mathcal{T}}\|<\epsilon$ and $\|[z_{i}\otimes y]_{\mathcal{T}}\|<\epsilon$
for $1\leq i\leq q$.
\end{enumerate}
In other words, $\mathcal{T}$ has property $\mathcal{X}_{0,1}$.\end{thm}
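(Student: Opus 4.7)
The plan is to transfer the known predual factorization $\mathbb{A}_1(1)$ for $\mathcal{M}_n$ to the desired property $\mathcal{X}_{0,1}$ for $\mathcal{T}$, using the isomorphism $\phi$, the intertwining-operator construction of Lemma~\ref{lem:approx-combined}, and a cancellation identity inside $\mathcal{M}_{n*}$ coming from Popescu's characterization of $\mathcal{M}_n$.

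First, I apply $\mathbb{A}_1(1)$ for $\mathcal{M}_n$ to $\phi^{-1}(\tau)$, then rescale mildly, obtaining $\xi_0,\eta_0\in F_n^2$ with $\|\xi_0\|,\|\eta_0\|\leq 1$ and $\|\tau-\phi([\xi_0\otimes\eta_0]_{\mathcal{M}_n})\|<\epsilon/3$. Next, I approximate the unit vectors $\xi_0/\|\xi_0\|$ and $\eta_0/\|\eta_0\|$ in norm by polynomial vectors $\eta_\xi,\eta_\eta$ supported on words of length at most a common $m$, set $v=1^m2$, and invoke (the argument of) Lemma~\ref{lem:words-isom} with $u=\varnothing$ to produce isometries $L_\xi,L_\eta\in\mathcal{L}_n$ satisfying $R_v\eta_\xi=L_\xi\xi_\varnothing$ and $R_v\eta_\eta=L_\eta\xi_\varnothing$. (The key identity $L_v^*L_w^*L_{w'}L_v=\delta_{ww'}I$ for $|w|,|w'|\leq m$ does not involve $u$.)

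Next, I apply Lemma~\ref{lem:approx-combined} to $z_1,\ldots,z_q$ with a sufficiently small tolerance $\epsilon'$ to produce an intertwining operator $X\colon F_n^2\to H$ with $\|X\xi_\varnothing\|=1$, $\|[X\xi_\varnothing\otimes z_j]_{\mathcal{T}}\|<\epsilon'$ for all $j$, and $\|[X\xi_\varnothing\otimes X\xi_\varnothing]_{\mathcal{T}}-\phi([\xi_\varnothing\otimes\xi_\varnothing]_{\mathcal{M}_n})\|<\epsilon'$. I then set $x=\|\xi_0\|\Phi(L_\xi)X\xi_\varnothing$ and $y=\|\eta_0\|\Phi(L_\eta)X\xi_\varnothing$, so $\|x\|,\|y\|\leq 1$. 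Using the intertwining identity $\Phi(L)X=XL$ together with Popescu's characterization — which ensures $\Phi(L_\eta)^*T\Phi(L_\xi)\in\mathcal{T}$ with $\Phi^{-1}(\Phi(L_\eta)^*T\Phi(L_\xi))=L_\eta^*\Phi^{-1}(T)L_\xi$ — the central computation is
\begin{align*}
(T,[x\otimes y]_{\mathcal{T}})
 &= \|\xi_0\|\|\eta_0\|\,\bigl(\Phi(L_\eta)^*T\Phi(L_\xi)X\xi_\varnothing,\,X\xi_\varnothing\bigr)\\
 &\approx \|\xi_0\|\|\eta_0\|\,\bigl(T,\,\phi([L_\xi\xi_\varnothing\otimes L_\eta\xi_\varnothing]_{\mathcal{M}_n})\bigr)\\
 &\approx \bigl(T,\,\phi([R_v\xi_0\otimes R_v\eta_0]_{\mathcal{M}_n})\bigr)
  = \bigl(T,\,\phi([\xi_0\otimes\eta_0]_{\mathcal{M}_n})\bigr)\approx (T,\tau),
\end{align*}
where the critical equality in the last line is the cancellation $[R_v\xi_0\otimes R_v\eta_0]_{\mathcal{M}_n}=[\xi_0\otimes\eta_0]_{\mathcal{M}_n}$, a direct consequence of $R_v^*AR_v=A$ for $A\in\mathcal{M}_n$.

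The orthogonality bound $\|[x\otimes z_j]_{\mathcal{T}}\|\leq\|L_\xi\|\cdot\|[X\xi_\varnothing\otimes z_j]_{\mathcal{T}}\|<\epsilon'$ uses the bimodule property $\mathcal{T}\cdot\mathcal{S}\subseteq\mathcal{T}$ (which follows from $\mathcal{L}_n^*\mathcal{L}_n\subseteq\mathcal{M}_n$, again via Popescu), while the symmetric bound $\|[z_j\otimes y]_{\mathcal{T}}\|<\epsilon'$ follows analogously, or from the self-adjointness of $\mathcal{T}$ which implies $\|[z_j\otimes y]_{\mathcal{T}}\|=\|[y\otimes z_j]_{\mathcal{T}}\|$. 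Choosing $\delta$ and $\epsilon'$ small enough makes every error less than $\epsilon$. The main obstacle is coordinating the polynomial approximation with the $R_v$-shift: the shift must be applied with the \emph{same} word $v$ on both sides so that it cancels inside the $\mathcal{M}_{n*}$-functional, and the polynomial approximation errors must remain controllable after multiplication by the scalars $\|\xi_0\|,\|\eta_0\|$ and the isometries $\Phi(L_\xi),\Phi(L_\eta)$.
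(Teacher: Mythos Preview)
Your approach is essentially the same as the paper's: both factor $\tau$ through $\mathbb{A}_1(1)$ for $\mathcal{M}_n$, invoke Lemma~\ref{lem:approx-combined} to obtain the intertwining operator $X$, define $x$ and $y$ as $\Phi(\cdot)X\xi_\varnothing$ for suitable elements of $\mathcal{L}_n$, and use the $\mathcal{S}$-bimodule structure of $\mathcal{T}$ (equivalently, $\Phi^{-1}(\Phi(B)^*T\Phi(A))=B^*\Phi^{-1}(T)A$) for the central estimate. The only difference is that the paper chooses arbitrary $A,B\in\mathcal{L}_n$ with $A\xi_\varnothing,B\xi_\varnothing$ approximating the factors---which forces an estimate of $\|x'\|,\|y'\|$ via the functional $[X\xi_\varnothing\otimes X\xi_\varnothing]_{\mathcal{T}}\approx\phi([\xi_\varnothing\otimes\xi_\varnothing]_{\mathcal{M}_n})$ applied to $\Phi(A^*A)$ and a final rescaling---whereas your $R_v$-shift trick produces \emph{isometries} $L_\xi,L_\eta$, giving $\|x\|,\|y\|\leq1$ immediately and slightly streamlining the bookkeeping.
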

\begin{proof}
Choose $\epsilon'>0$ such that $\epsilon'<\epsilon$ and $1-(1+2\epsilon')^{-2}(1-\epsilon')<\epsilon$.
Since $\mathcal{M}_{n}$ has property $\mathbb{A}_{1}(1)$, there
are vectors $\xi$ and $\upsilon$ in $F_{n}^{2}$ with $\|\xi\|\leq1+\epsilon'/2$
and $\|\upsilon\|\leq1+\epsilon'/2$ such that $\tau=\phi([\xi\otimes\upsilon]_{\mathcal{M}_{n}})$.
Since $\xi_{\varnothing}$ is cyclic for $\mathcal{L}_{n}$, there
are $A$ and $B$ in $\mathcal{L}_{n}$ such that $\|A\xi_{\varnothing}-\xi\|<\epsilon'/(4(1+\epsilon'))$
and $\|B\xi_{\varnothing}-\upsilon\|<\epsilon'/(4(1+\epsilon'))$.
Then 
\[
\|A\xi_{\varnothing}\|\leq\|A\xi_{\varnothing}-\xi\|+\|\xi\|<1+\epsilon',
\]
and similarly $\|B\xi_{\varnothing}\|<1+\epsilon'$. This gives
\begin{eqnarray*}
\|[A\xi_{\varnothing}\otimes B\xi_{\varnothing}]_{\mathcal{M}_{n}}-[\xi\otimes\upsilon]_{\mathcal{M}_{n}}\| & \leq & \|[(A\xi_{\varnothing}-\xi)\otimes B\xi_{\varnothing}]\|\\
 &  & +\|[\xi\otimes(B\xi_{\varnothing}-\upsilon)]_{\mathcal{M}_{n}}\|\\
 & \leq & \|A\xi_{\varnothing}-\xi\|\|B\xi_{\varnothing}\|+\|\xi\|\|B\xi_{\varnothing}-\upsilon\|\\
 & < & \epsilon'/2.
\end{eqnarray*}

By Lemma \ref{lem:approx-combined}, there is an intertwining operator
$X:F_{n}^{2}\to H$ such that $\|X\xi_{\varnothing}\|=1$, $\|[X\xi_{\varnothing}\otimes z_{i}]_{\mathcal{T}}\|<\epsilon'/(\|A\|+\|B\|)$
for $1\leq i\leq q$ and $\|[X\xi_{\varnothing}\otimes X\xi_{\varnothing}]_{\mathcal{T}}-\phi([\xi_{\varnothing}\otimes\xi_{\varnothing}]_{\mathcal{M}_{n}})\|<\epsilon'/(2(\|A\|+\|B\|)^{2}).$
Note that since $\mathcal{T}$ is self-adjoint, we also have $\|[z_{i}\otimes X\xi_{\varnothing}]_{\mathcal{T}}\|<\epsilon'/(\|A\|+\|B\|)$
for $1\leq i\leq q$.

Define vectors $x'$ and $y'$ in $H$ by $x'=\Phi(A)X\xi_{\varnothing}$
and $y'=\Phi(B)X\xi_{\varnothing}$. Then 
\begin{eqnarray*}
\|x'\|^{2} & = & \|\Phi(A)X\xi_{\varnothing}\|^{2}\\
 & = & \|\Phi(A)X\xi_{\varnothing}\|^{2}-\|A\xi_{\varnothing}\|^{2}+\|A\xi_{\varnothing}\|^{2}\\
 & = & |([X\xi_{\varnothing}\otimes X\xi_{\varnothing}]_{\mathcal{T}}-\phi([\xi_{\varnothing}\otimes\xi_{\varnothing}]_{\mathcal{M}_{n}}),\Phi(A^{*}A))|+\|A\xi_{\varnothing}\|^{2}\\
 & \leq & \|[X\xi_{\varnothing}\otimes X\xi_{\varnothing}]_{\mathcal{T}}-\phi([\xi_{\varnothing}\otimes\xi_{\varnothing}]_{\mathcal{M}_{n}})\|\|A\|^{2}+\|A\xi_{\varnothing}\|^{2}\\
 & < & 1+2\epsilon',
\end{eqnarray*}
and similarly, $\|y'\|^{2}<1+2\epsilon'$. For $T$ in $\mathcal{T}$,

\begin{align*}
|([x'\otimes y']_{\mathcal{T}}-\phi([A\xi_{\varnothing} & \otimes B\xi_{\varnothing}]_{\mathcal{M}_{n}}),T)|\\
 & =|([\Phi(A)X\xi_{\varnothing}\otimes\Phi(B)X\xi_{\varnothing}]_{\mathcal{T}}-\phi([A\xi_{\varnothing}\otimes B\xi_{\varnothing}]_{\mathcal{M}_{n}}),T)|\\
 & =|([X\xi_{\varnothing}\otimes X\xi_{\varnothing}]_{\mathcal{T}}-\phi([\xi_{\varnothing}\otimes\xi_{\varnothing}]_{\mathcal{M}_{n}}),\Phi(A)^{*}T\Phi(B))|\\
 & \leq\|[X\xi_{\varnothing}\otimes X\xi_{\varnothing}]_{\mathcal{T}}-\phi([\xi_{\varnothing}\otimes\xi_{\varnothing}]_{\mathcal{M}_{n}})\|\|A\|\|B\|\|T\|\\
 & <\frac{\epsilon'}{2}\|T\|,
\end{align*}
which implies $\|[x'\otimes y']_{\mathcal{T}}-\phi([A\xi_{\varnothing}\otimes B\xi_{\varnothing}]_{\mathcal{M}_{n}})\|<\epsilon'/2$.
Thus
\begin{eqnarray*}
\|[x'\otimes y']_{\mathcal{T}}-\tau\| & = & \|[x'\otimes y']_{\mathcal{T}}-\phi([\xi\otimes\upsilon]_{\mathcal{M}_{n}})\|\\
 & \leq & \|[x'\otimes y']_{\mathcal{T}}-\phi([A\xi_{\varnothing}\otimes B\xi_{\varnothing}]_{\mathcal{M}_{n}})\|\\
 &  & +\|[A\xi_{\varnothing}\otimes B\xi_{\varnothing}]_{\mathcal{M}_{n}}-[\xi\otimes\upsilon]_{\mathcal{M}_{n}}\|\\
 & < & \epsilon'.
\end{eqnarray*}
For $1\leq i\leq q$, 
\[
\|[x'\otimes z_{i}]_{\mathcal{T}}\|=\|[AX\xi_{\varnothing}\otimes z_{i}]_{\mathcal{T}}\|\leq\|A\|\|[X\xi_{\varnothing}\otimes z_{i}]_{\mathcal{T}}\|<\epsilon',
\]
and similarly, $\|[z_{i}\otimes y']_{\mathcal{T}}\|<\epsilon'$.

Now take $x=(1+2\epsilon')^{-1}x'$ and $y=(1+2\epsilon')^{-1}y'$.
Then by choice of $\epsilon'$ we get $\|x\|\leq1$ and $\|y\|\leq1$.
Similarly, $\|[x\otimes z_{i}]_{\mathcal{T}}\|<\epsilon$ and $\|[z_{i}\otimes y]_{\mathcal{T}}\|<\epsilon$
for $1\leq i\leq q$. Finally, we have 
\begin{eqnarray*}
\|[x\otimes y]_{\mathcal{T}}-\tau\| & \leq & (1+2\epsilon')^{-2}\|[x'\otimes y']_{\mathcal{T}}-\tau\|+(1-(1+2\epsilon')^{-2})\|\tau\|\\
 & < & 1-(1+2\epsilon')^{-2}(1-\epsilon')\\
 & < & \epsilon,
\end{eqnarray*}
as required.
\end{proof}

\subsection{Absolute continuity and analyticity}
\begin{thm}
\label{thm:abs-cont-is-anal}For $n\geq2$, every absolutely continuous
isometric $n$-tuple is analytic.\end{thm}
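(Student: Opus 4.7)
The plan is to deduce the theorem from Theorem \ref{thm:x01} together with the general dual-algebra machinery recalled in Section \ref{sub:dual-alg-theory-prelim}. By Definition \ref{def:analytic-isom-tuple}, showing that $S$ is analytic amounts to showing that the free semigroup algebra $\mathrm{W}(S)$ is isomorphic to $\mathcal{L}_{n}$. Since $\Phi$ is already a completely isometric weak-$*$ homeomorphism from $\mathcal{L}_{n}$ onto the weak-$*$ closed algebra $\mathcal{S}$ generated by $S_{1},\ldots,S_{n}$, this reduces to verifying that $\mathcal{S}=\mathrm{W}(S)$; equivalently, that the weak-$*$ closed algebra $\mathcal{S}$ is already weakly closed.

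First I would observe that property $\mathcal{X}_{0,1}$ is inherited by weak-$*$ closed subspaces. Given a weak-$*$ continuous functional $\tau$ on $\mathcal{S}$ with $\|\tau\|\leq 1$, a Hahn-Banach argument applied to $\mathcal{S}\subseteq\mathcal{T}$ extends $\tau$ to a weak-$*$ continuous functional on $\mathcal{T}$ of norm at most $1$; applying property $\mathcal{X}_{0,1}$ for $\mathcal{T}$ (Theorem \ref{thm:x01}) produces vectors $x,y$ in $H$, and restriction carries the approximation $\|\tau-[x\otimes y]_{\mathcal{T}}\|<\epsilon$ and the auxiliary estimates $\|[x\otimes z_{j}]_{\mathcal{T}}\|,\|[z_{j}\otimes y]_{\mathcal{T}}\|<\epsilon$ down to the corresponding functionals on $\mathcal{S}$. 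Hence $\mathcal{S}$ has property $\mathcal{X}_{0,1}$, which implies property $\mathbb{A}_{1}(1)$ by \cite{BFP85}. A standard argument (for instance Proposition 59.2 of \cite{Con00}) then shows that any weak-$*$ closed operator space with property $\mathbb{A}_{1}(1)$ is weakly closed, since every weak-$*$ continuous functional is represented by a single rank-one tensor and is therefore weakly continuous. Thus $\mathcal{S}$ is weakly closed, which yields $\mathcal{S}=\mathrm{W}(S)$, and $\Phi$ provides the desired isomorphism $\mathcal{L}_{n}\simeq\mathrm{W}(S)$.

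The main obstacle is not in this final deduction but has already been absorbed into Theorem \ref{thm:x01}, and in particular into the vector-juggling in Lemmas \ref{lem:vecs1}--\ref{lem:approx-combined}. Those lemmas produce, from an arbitrary weak-$*$ continuous functional on $\mathcal{T}$, approximating rank-one functionals whose constituent vectors can moreover be made nearly orthogonal to any prescribed finite collection of test vectors; this is precisely the step that the single-variable theory handles via the spectral theorem but which, in the absence of a higher-dimensional spectral theorem, required the noncommutative dual-algebra approach developed in Section \ref{sec:abs-cont}. Once property $\mathcal{X}_{0,1}$ for $\mathcal{T}$ is in hand, the passage to analyticity of $S$ is essentially a citation chain.
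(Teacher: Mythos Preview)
Your proposal is correct and follows essentially the same route as the paper's own proof: invoke Theorem \ref{thm:x01} to get property $\mathcal{X}_{0,1}$ for $\mathcal{T}$, pass it down to the weak-$*$ closed subspace $\mathcal{S}$, deduce property $\mathbb{A}_{1}(1)$ via \cite{BFP85}, and conclude that $\mathcal{S}$ is weakly closed and hence equals $\mathrm{W}(S)\simeq\mathcal{L}_{n}$. The only quibble is that a Hahn--Banach extension of a weak-$*$ continuous functional to $\mathcal{T}$ need only have norm at most $1+\delta$ rather than exactly $1$, but this is harmless after a routine rescaling and does not affect the argument.
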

\begin{proof}
For $n\geq2$, let $S=(S_{1},\ldots,S_{n})$ be an absolutely continuous
isometric $n$-tuple, and let $\mathcal{S}$ denote the weak-{*} closed
unital algebra generated by $S_{1},\ldots,S_{n}$. By Corollary 1.2
of \cite{DY08}, $\mathcal{S}$ is isomorphic to the noncommutative
analytic Toeplitz algebra $\mathcal{L}_{n}$. By Theorem \ref{thm:x01},
$\mathcal{S}$ has property $\mathcal{X}_{0,1}$, and hence has property
$\mathbb{A}_{1}(1)$. Therefore, by the discussion in Section \ref{sub:dual-alg-theory-prelim},
$\mathcal{S}$ is weakly closed, and hence $\mathcal{S}$ is actually
the free semigroup algebra (i.e. the weakly closed algebra) generated
by $S_{1},\ldots,S_{n}$. Since $\mathcal{S}$ is isomorphic to $\mathcal{L}_{n}$,
this implies that $S$ is analytic.
\end{proof}

The next result follow from Theorem 4.12 of \cite{Ken11}.
\begin{cor}
\label{cor:wand-vecs-span}For $n\geq2$, let $S=(S_{1},\ldots,S_{n})$
be an absolutely continuous isometric $n$-tuple acting on a Hilbert
space $H$. Then the wandering vectors for $S$ span $H$.
\end{cor}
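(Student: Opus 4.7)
The plan is to combine Theorem \ref{thm:abs-cont-is-anal}, which we have just proved, with the wandering vector theorem from \cite{Ken11}. Specifically, by Theorem \ref{thm:abs-cont-is-anal} the tuple $S$ is analytic, so its free semigroup algebra $\mathrm{W}(S)$ is isomorphic to $\mathcal{L}_{n}$; the corollary then reduces to the statement that the wandering vectors for an \emph{analytic} isometric $n$-tuple span the underlying Hilbert space, which is exactly Theorem 4.12 of \cite{Ken11}.

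In slightly more detail, to prove the corollary from scratch using the machinery already developed in this section one would proceed as follows. Let $N \subseteq H$ be the closed linear span of the wandering vectors for $S$; note that $N$ is invariant under $\mathrm{W}(S)$. Suppose for contradiction that $N \ne H$, and pick a unit vector $z \in N^{\perp}$. Observe that $x \in H$ is wandering for $S$ precisely when $\|x\|=1$ and $(S_{w}x,x) = \delta_{w,\varnothing}$ for every $w \in \mathbb{F}_{n}^{*}$, which is equivalent to the identity $[x\otimes x]_{\mathcal{T}} = \phi([\xi_{\varnothing}\otimes\xi_{\varnothing}]_{\mathcal{M}_{n}})$ in the predual of $\mathcal{T}$. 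Lemma \ref{lem:approx-combined} produces, for any prescribed $z_{1},\dots,z_{q}$ and $\epsilon>0$, an intertwining operator $X$ with $\|X\xi_{\varnothing}\|=1$ such that $X\xi_{\varnothing}$ is both approximately wandering (in the precise sense that $\|[X\xi_{\varnothing}\otimes X\xi_{\varnothing}]_{\mathcal{T}}-\phi([\xi_{\varnothing}\otimes\xi_{\varnothing}]_{\mathcal{M}_{n}})\|<\epsilon$) and approximately orthogonal to the $z_{i}$'s.

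The remaining step, which is the substantive content of Theorem 4.12 of \cite{Ken11}, is to pass from approximately wandering vectors of this kind to a genuine wandering vector lying close to $N^{\perp}$ (contradicting the definition of $N$). This is done by iterating Lemma \ref{lem:approx-combined} to build a rapidly convergent sequence of approximate wandering vectors whose limit is wandering on the nose; the key point is that the error estimates in the lemma are strong enough to control both the perturbation of $[x\otimes x]_{\mathcal{T}}$ and the projection onto $N^{\perp}$ simultaneously. The main obstacle is precisely this synchronization of two approximation schemes, but since this is carried out in \cite{Ken11}, the proof here is limited to citing it after invoking Theorem \ref{thm:abs-cont-is-anal}.
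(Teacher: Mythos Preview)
Your proposal is correct and matches the paper's approach exactly: the paper simply remarks that the corollary follows from Theorem~4.12 of \cite{Ken11}, i.e., one applies Theorem~\ref{thm:abs-cont-is-anal} to conclude that $S$ is analytic and then invokes the spanning result for analytic tuples from \cite{Ken11}. Your additional sketch of how one might reprove this using Lemma~\ref{lem:approx-combined} is extra commentary not present in the paper, but it is accurate and not needed for the corollary as stated.
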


It was shown in Corollary 5.8 of \cite{Ken11} that every analytic
isometric tuple is hyperreflexive with hyperreflexivity constant at
most $3$, but the next result can also be proved directly using Theorem
\ref{thm:x01} of the present paper and Theorem 3.1 of \cite{Ber98}.
\begin{cor}
Absolutely continuous row isometries are hyperreflexive with hyperreflexivity
constant at most $3$.
\end{cor}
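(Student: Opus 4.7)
The plan is to apply Bercovici's general hyperreflexivity theorem (Theorem 3.1 of \cite{Ber98}), which states that any weak-{*} closed subspace of $B(H)$ with property $\mathcal{X}_{0,1}$ is hyperreflexive with constant at most $3$. The entire task therefore reduces to verifying that the free semigroup algebra $\mathcal{S}=\mathrm{W}(S_{1},\ldots,S_{n})$ generated by an absolutely continuous isometric tuple $S$ itself has property $\mathcal{X}_{0,1}$, after which the corollary follows by direct invocation.

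For the verification, I would appeal to Theorem \ref{thm:x01}, which establishes property $\mathcal{X}_{0,1}$ for the ambient operator system $\mathcal{T}$, and then observe that this factorization property is inherited by every weak-{*} closed subspace. Concretely, given $\tau\in\mathcal{S}_{*}$ with $\|\tau\|\leq 1$, unit vectors $z_{1},\ldots,z_{q}\in H$, and $\epsilon>0$, use Hahn-Banach in the dual pairing $(\mathcal{T},\mathcal{T}_{*})$ to extend $\tau$ to a weak-{*} continuous linear functional $\widetilde{\tau}$ on $\mathcal{T}$ with $\|\widetilde{\tau}\|=\|\tau\|\leq 1$. Applying Theorem \ref{thm:x01} to $\widetilde{\tau}$ and to the vectors $z_{1},\ldots,z_{q}$ produces $x,y\in H$ with $\|x\|,\|y\|\leq 1$ such that
\[
\|\widetilde{\tau}-[x\otimes y]_{\mathcal{T}}\|<\epsilon,\qquad \|[x\otimes z_{j}]_{\mathcal{T}}\|<\epsilon,\qquad \|[z_{j}\otimes y]_{\mathcal{T}}\|<\epsilon
\]
for $1\leq j\leq q$. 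Since restriction of a linear functional from $\mathcal{T}$ to $\mathcal{S}$ can only decrease its norm, the same pair $(x,y)$ witnesses property $\mathcal{X}_{0,1}$ for $\mathcal{S}$.

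Once $\mathcal{S}$ is known to have property $\mathcal{X}_{0,1}$, Theorem 3.1 of \cite{Ber98} directly delivers the hyperreflexivity constant bound of $3$, completing the proof. There is really no obstacle at this stage: the deep technical content of the corollary is entirely packaged inside Theorem \ref{thm:x01}, whose proof constitutes the bulk of the section. The present step is a soft functional-analytic reduction, and the only point requiring any care is the passage of property $\mathcal{X}_{0,1}$ to the weak-{*} closed subspace $\mathcal{S}\subseteq\mathcal{T}$, which is secured by the norm-preserving Hahn-Banach extension available in the dual pairing.
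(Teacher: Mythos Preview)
Your proposal is correct and matches exactly the direct argument the paper itself indicates: the text preceding the corollary states that the result ``can also be proved directly using Theorem \ref{thm:x01} of the present paper and Theorem 3.1 of \cite{Ber98},'' and the inheritance of property $\mathcal{X}_{0,1}$ by the weak-{*} closed subspace $\mathcal{S}\subseteq\mathcal{T}$ is precisely the mechanism already noted in Section \ref{sub:dual-alg-theory-prelim}. The paper also mentions the alternative route via Corollary 5.8 of \cite{Ken11} (combined with Theorem \ref{thm:abs-cont-is-anal}), but your argument is the one the paper singles out as the self-contained proof.
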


\section{Singular isometric tuples\label{sec:sing-isom-tuple}}

In Theorem \ref{thm:abs-cont-is-anal}, we showed that for $n\geq2$,
an isometric $n$-tuple is absolutely continuous if and only if it
is analytic. With this operator-algebraic characterization of an absolutely
continuous isometric tuple, we are now able to give an operator-algebraic
characterization of a singular isometric tuple.

\begin{thm}
\label{thm:sing-self-adjoint}For $n\geq2$, an isometric $n$-tuple
is singular if and only if the free semigroup algebra it generates
is a von Neumann algebra.
\end{thm}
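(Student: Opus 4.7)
The plan is to deduce both directions from the Structure Theorem for free semigroup algebras (Theorem \ref{thm:struct-thm-fsa}), together with the main result of the previous section (Theorem \ref{thm:abs-cont-is-anal}) identifying absolute continuity with analyticity in the regime $n\geq 2$. Throughout, write $\mathcal{V} = \mathrm{W}(V)$ and let $P \in \mathcal{V}$ be the projection produced by the Structure Theorem, whose range is $\mathcal{V}$-invariant and on which $\mathcal{V}$ acts as an analytic free semigroup algebra.

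For the forward direction, assume $V$ is singular. Because $P \in \mathcal{V}$ and $\mathrm{range}(P)$ is $\mathcal{V}$-invariant, each $V_i$ leaves $\mathrm{range}(P)$ invariant, so the restriction $V|_{\mathrm{range}(P)}$ is an isometric $n$-tuple whose generated free semigroup algebra is, by clause (1) of the Structure Theorem, analytic. Any analytic tuple is absolutely continuous (immediate from Definition \ref{def:analytic-isom-tuple} and Definition \ref{def:mult-abs-cont}, as noted at the end of Section \ref{sec:background-preliminaries}), so singularity of $V$ forces $\mathrm{range}(P) = 0$, i.e.\ $P = 0$. Clause (3) of the Structure Theorem then collapses to $\mathcal{V} = \mathrm{W}^{*}(V)$, a von Neumann algebra.

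For the reverse direction, assume $\mathcal{V}$ is a von Neumann algebra and let $M$ be an arbitrary nonzero invariant subspace for $V$; the goal is to show $V|_M$ is not absolutely continuous. First upgrade $M$ to a reducing subspace: invariance of $M$ under the generators passes to the WOT-closure $\mathcal{V}$, and self-adjointness of $\mathcal{V}$ then forces $V_i^{*}M \subseteq M$, so $V|_M$ is a well-defined isometric $n$-tuple. Next, show that $\mathrm{W}(V|_M)$ is itself a von Neumann algebra: since $V_i^{*} \in \mathcal{V}$ is a WOT-limit of polynomials in $V_1,\ldots,V_n$, and the restriction-to-$M$ map is WOT-continuous (as $M$ is reducing), $V_i^{*}|_M$ is a WOT-limit on $M$ of polynomials in $V_1|_M,\ldots,V_n|_M$, hence lies in $\mathrm{W}(V|_M)$.

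Now if $V|_M$ were absolutely continuous, Theorem \ref{thm:abs-cont-is-anal} would make it analytic, giving a unital algebra isomorphism $\Phi \colon \mathcal{L}_n \to \mathrm{W}(V|_M)$ sending $L_i \mapsto V_i|_M$. The preimage $\Phi^{-1}(V_i^{*}|_M) \in \mathcal{L}_n$ would then left-invert $L_i$ in $\mathcal{L}_n$. However, no such left inverse can exist: if $A = \sum_{w\in\mathbb{F}_n^{*}} a_w L_w$ lies in $\mathcal{L}_n$, one computes $(AL_i\xi_\varnothing, \xi_\varnothing) = (A\xi_i, \xi_\varnothing) = 0$, contradicting $AL_i = I$. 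This rules out $V|_M$ being absolutely continuous for any invariant $M$, so $V$ is singular. The step that demands the most care is the inheritance of self-adjointness from $\mathcal{V}$ to $\mathrm{W}(V|_M)$; everything else is either a direct appeal to Theorems \ref{thm:struct-thm-fsa} and \ref{thm:abs-cont-is-anal} or the short Fourier-series calculation above.
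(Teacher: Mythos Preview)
Your proof is correct and follows essentially the same route as the paper's: both directions rest on the Structure Theorem (Theorem~\ref{thm:struct-thm-fsa}) together with Theorem~\ref{thm:abs-cont-is-anal}. The only differences are expository---the paper cites \cite{DP99} for the fact that $\mathcal{L}_n$ is not self-adjoint where you give the direct Fourier-series argument that $L_i$ has no left inverse, and the paper leaves implicit the step (which you spell out) that self-adjointness of $\mathcal{V}$ forces every invariant subspace to be reducing with $\mathrm{W}(V|_M)$ again self-adjoint.
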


\begin{proof}
Let $V=(V_{1},\ldots,V_{n})$ be an isometric $n$-tuple, and let
$\mathcal{V}$ denote the free semigroup algebra (i.e. the weakly
closed algebra) generated by $V$. If $\mathcal{V}$ is a von Neumann
algebra, then $V$ has no absolutely continuous part since, by Theorem
\ref{thm:abs-cont-is-anal}, an absolutely continuous isometric tuple
is analytic, and the noncommutative analytic Toeplitz algebra $\mathcal{L}_{n}$
is not self-adjoint by Corollary 1.5 of \cite{DP99}.

Conversely, if $V$ is singular then it has no analytic restriction
to an invariant subspace since, by Theorem \ref{thm:abs-cont-is-anal},
an absolutely continuous isometric tuple is analytic. Thus by Theorem
\ref{thm:struct-thm-fsa}, $\mathcal{V}$ is a von Neumann algebra. 
\end{proof}

Example \ref{exa:reductive-abs-cont-uni} showed that it is possible
for an absolutely continuous unitary to generate a von Neumann algebra.
Theorem \ref{thm:sing-self-adjoint} implies that there is no higher-dimensional
analogue of this phenomenon.

Recall that a family of operators is said to be reductive if every
subspace invariant for the family is also coinvariant.
\begin{cor}
For $n\geq2$, every reductive unitary $n$-tuple is singular.\end{cor}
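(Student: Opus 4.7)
The plan is to argue by contradiction. Suppose $U=(U_{1},\ldots,U_{n})$ is a reductive unitary $n$-tuple on a Hilbert space $H$ that is not singular. Then by Definition \ref{def:mult-abs-cont} there is a nonzero $U$-invariant subspace $M\subseteq H$ for which the restriction $U|_{M}=(U_{1}|_{M},\ldots,U_{n}|_{M})$ is absolutely continuous. The first step will be to observe that reductivity forces $M$ to reduce each $U_{i}$, and to verify that $U|_{M}$ is itself a unitary $n$-tuple; this last point follows because for any $m\in M$ one has $m=\sum_{i}U_{i}U_{i}^{*}m$ with each $U_{i}^{*}m\in M$, so $\sum_{i}(U_{i}|_{M})(U_{i}|_{M})^{*}=I_{M}$.

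Next I would invoke the main result of Section \ref{sec:abs-cont}. Since $n\geq 2$ and $U|_{M}$ is absolutely continuous, Theorem \ref{thm:abs-cont-is-anal} guarantees that $U|_{M}$ is analytic, and then Corollary \ref{cor:wand-vecs-span} produces a nonzero wandering vector $x\in M$. Set
\[
K_{x}=\overline{\mathrm{span}}\{U_{w}x:w\in\mathbb{F}_{n}^{*}\}.
\]
Because $\{U_{w}x\}_{w\in\mathbb{F}_{n}^{*}}$ is orthonormal and $U_{i}U_{w}x=U_{iw}x$, the map $\xi_{w}\mapsto U_{w}x$ extends to a unitary $F_{n}^{2}\to K_{x}$ intertwining $L_{i}$ with $U_{i}|_{K_{x}}$, so $U|_{K_{x}}$ is unitarily equivalent to the unilateral $n$-shift $L$.

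The contradiction will then be immediate. The subspace $K_{x}$ is $U$-invariant and nonzero, so by reductivity it reduces $U$, and the observation of the first paragraph applies again to show that $U|_{K_{x}}$ is a unitary $n$-tuple. But the unilateral $n$-shift $L$ is manifestly not unitary: the vacuum vector $\xi_{\varnothing}$ is orthogonal to every $L_{i}L_{w}\xi_{\varnothing}=\xi_{iw}$, so $\xi_{\varnothing}\notin\sum_{i}L_{i}F_{n}^{2}$ and hence $\sum_{i}L_{i}L_{i}^{*}\neq I$. This contradicts the unitarity of $U|_{K_{x}}$ and closes the argument.

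The hard part, of course, is already packaged into the results being quoted: the real work is the absolute-continuity-implies-analyticity theorem of this section together with the wandering-vector corollary. Once those are granted, the argument is a clean reductio whose only subtlety is the double use of reductivity (first to upgrade $M$ from invariant to reducing, and then to do the same for the cyclic subspace $K_{x}$); I do not expect any further obstacle.
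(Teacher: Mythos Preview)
Your proof is correct and close in spirit to the paper's: both obtain a wandering vector $x$ and derive a contradiction from reductivity via the cyclic subspace it generates. The paper, however, reaches the wandering vector by a different route: rather than unpacking Definition~\ref{def:mult-abs-cont} and invoking Corollary~\ref{cor:wand-vecs-span} on an absolutely continuous summand, it applies Theorem~\ref{thm:sing-self-adjoint} together with the dichotomy of \cite[Corollary~4.13]{Ken11} (either $\mathrm{W}(V)$ is a von Neumann algebra or $V$ admits a wandering vector) to produce $x$ directly, without first passing to a reducing subspace $M$. The contradiction is also phrased differently: the paper exhibits $\sum_{i}V_{i}\mathcal{V}[x]$ as an invariant subspace that fails to be coinvariant (since $x$ is orthogonal to it), an argument that does not use unitarity of the tuple; your version instead uses unitarity twice to force $U|_{K_{x}}$ to be simultaneously a unitary tuple and a unilateral shift. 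Both routes ultimately rest on the wandering-vector results of \cite{Ken11}, so the difference is one of packaging rather than substance; the paper's formulation is slightly more economical (one use of reductivity rather than two) and in fact proves the marginally stronger statement that any reductive isometric $n$-tuple generates a self-adjoint free semigroup algebra.
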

\begin{proof}
Let $V=(V_{1},\ldots,V_{n})$ be a reductive isometric $n$-tuple,
and let $\mathcal{V}$ denote the free semigroup algebra generated
by $V$. By the dichotomy for free semigroup algebras, Corollary 4.13
of \cite{Ken11}, if $\mathcal{V}$ is not a von Neumann algebra,
then there is a vector $x$ that is wandering for $V$. Let $\mathcal{V}[x]$
denote the cyclic invariant subspace generated by $x$. Then the subspace
$\sum_{i=1}^{n}V_{i}\mathcal{V}[x]$ is invariant for $V$ but not
coinvariant, which would contradict that $V$ is reductive. Thus $\mathcal{V}$
is a von Neumann algebra and $V$ is singular by Theorem \ref{thm:sing-self-adjoint}.
\end{proof}

\begin{example}
By Theorem \ref{thm:sing-self-adjoint}, for $n\geq2$ an isometric
$n$-tuple is singular if and only if the free semigroup algebra it
generates is a von Neumann algebra. The existence of a self-adjoint
free semigroup algebra on two or more generators was conjectured in
\cite{DKP01}, but it took some time for the first example to be constructed.
In \cite{Read05}, Read showed that $B(\ell^{2})$ is generated as
a free semigroup algebra on two generators. In \cite{Dav06}, Davidson
gave an exposition of Read's construction and showed that it could
be generalized to show that $B(\ell^{2})$ is generated as a free
semigroup algebra on $n$ generators for every $n\geq2$. By our characterization
of singularity, this gives an example of a singular isometric $n$-tuple
for every $n\geq2$.
\end{example}

\section{The Lebesgue-von Neumann-Wold decomposition\label{sec:lebesgue-wold-decomp}}

In Theorem \ref{thm:abs-cont-is-anal}, we showed that for $n\geq2$,
an isometric $n$-tuple is absolutely continuous if and only if it
is analytic. In Theorem  \ref{thm:sing-self-adjoint}, we showed that
for $n\geq2$, an isometric $n$-tuple is singular if and only if
the free semigroup algebra (i.e. the weakly closed algebra) it generates
is a von Neumann algebra. With these operator-algebraic characterizations
of absolute continuity and singularity, we will be able to prove the
Lebesgue-von Neumann-Wold decomposition of an isometric tuple.

In the classical case, the Lebesgue decomposition of a measure guarantees
that every unitary splits into absolutely continuous and singular
parts. For $n\geq2$, it turns out that it is possible for a unitary
$n$-tuple to be irreducible and neither absolutely continuous nor
singular.
\begin{defn}
An isometric $n$-tuple $V=(V_{1},\ldots,V_{n})$ is said to be of
\textbf{dilation type} if it has no summand that is absolutely continuous
or singular.

Note that by the Wold decomposition of an isometric tuple, Proposition
\ref{pro:wold-decomp}, an isometric $n$-tuple of dilation type is
necessarily unitary. The next result provides a characterization of
an isometric tuple of dilation type as a minimal dilation, in the
sense of Section \ref{sub:dilation-theory}.\end{defn}
\begin{prop}
\label{pro:struct-dilation-type}Let $V=(V_{1},\ldots,V_{n})$ be
an isometric $n$-tuple of dilation type. Then there is a subspace
$H$ coinvariant under $V$ such that $H$ is cyclic for $V$ and
the compression of $V$ to $H^{\perp}$ is a unilateral $n$-shift.
In other words, $V$ is the minimal isometric dilation of its compression
to $H$.\end{prop}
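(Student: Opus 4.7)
The plan is to reduce to the case where $V$ is unitary, apply the structure theorem for free semigroup algebras to produce a canonical invariant subspace $N$, and then show that $V|_N$ is a pure unilateral $n$-shift and that $H := N^\perp$ is cyclic. The first step is immediate from the Wold decomposition (Proposition \ref{pro:wold-decomp}): writing $V = V_u \oplus U$ with $V_u$ a unilateral $n$-shift and $U$ unitary, and using that any unilateral $n$-shift is analytic and hence absolutely continuous, the dilation-type hypothesis forces $V_u = 0$.

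Next I would apply Theorem \ref{thm:struct-thm-fsa} to $\mathcal{V} = \mathrm{W}(V)$ to obtain a projection $P \in \mathcal{V}$ with range $N$ invariant, $\mathcal{V}|_N$ analytic, and $P^\perp \mathcal{V} P^\perp$ a von Neumann algebra. The dilation-type hypothesis rules out both $P = 0$ (which would make $\mathcal{V}$ a von Neumann algebra, so that $V$ is singular by Theorem \ref{thm:sing-self-adjoint}) and $P = I$ (which would make $V$ analytic and hence absolutely continuous). Setting $H = N^\perp$, I would then need to verify (a) that $V|_N$ is a unilateral $n$-shift, and (b) that $H$ is cyclic for $V$.

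For (a), apply Proposition \ref{pro:wold-decomp} to the isometric tuple $V|_N$, writing $V|_N = V'_u \oplus U'$ with $U' = V|_{N_U}$ unitary on some $N_U \subseteq N$. The core claim is that $N_U$, which is a priori only reducing for $V|_N$, is in fact reducing for $V$. For $x \in N_U$ the unitarity identities $x = \sum_i V_i V_i^* x$ (from $V$) and $x = \sum_i V_i P_{N_U} V_i^* x$ (from $U'$) combine to give $\sum_i V_i P_{N_U^\perp} V_i^* x = 0$, and since the ranges $V_i K$ are pairwise orthogonal this forces $P_{N_U^\perp} V_i^* x = 0$, i.e.\ $V_i^* x \in N_U$. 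Thus $V|_{N_U}$ is a direct summand of $V$; it is also absolutely continuous, as the restriction to an invariant subspace of the analytic (hence absolutely continuous) tuple $V|_N$, and so dilation type forces $N_U = 0$. Hence $V|_N$ is a pure unilateral $n$-shift.

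For (b), set $M' = (\bigvee_w V_w H)^\perp$, a coinvariant subspace of $N$. Iterating coinvariance and the identity $\sum_i V_i V_i^* = I$ shows that for any $x \in M'$ and any $k \geq 1$, $x = \sum_{|w|=k} V_w V_w^* x \in \sum_{|w|=k} V_w M' \subseteq \sum_{|w|=k} V_w N$; since $V|_N$ is a pure unilateral $n$-shift by (a), $\bigcap_k \overline{\sum_{|w|=k} V_w N} = 0$, so $M' = 0$ and $H$ is cyclic. The main obstacle is step (a), specifically the promotion of ``reducing for $V|_N$'' to ``reducing for $V$'': the unitarity of $V$ is exactly the extra hypothesis needed to make this promotion work, thereby allowing the Wold unitary piece of $V|_N$ to be extracted as an absolutely continuous direct summand of $V$ and eliminated by the dilation-type assumption.
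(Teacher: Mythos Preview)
Your proof is correct, and the overall architecture (reduce to $V$ unitary via Wold, invoke the structure theorem for the coinvariant subspace, then verify the shift property and cyclicity) matches the paper's. The execution after that point, however, is genuinely different.

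The paper works with the wandering subspace $K=(H+\sum_i V_iH)\ominus H$ directly: it argues that $K=0$ would force $H$ to be reducing, so that $V$ would split as (singular)$\oplus$(analytic), contradicting dilation type; and it asserts that $K$ is cyclic for $V|_{H^\perp}$ because $H$ is cyclic for $V$. The latter fact about the structure projection is part of the full result in \cite{DKP01} but is not included in the version stated here as Theorem~\ref{thm:struct-thm-fsa}, so the paper's proof is implicitly importing an extra property from that reference.

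Your route avoids this import. Applying Wold to $V|_N$ and promoting the unitary piece $N_U$ from ``reducing for $V|_N$'' to ``reducing for $V$'' (which, as you note, is exactly where the unitarity of $V$ enters) lets you kill $N_U$ as an absolutely continuous summand; and your direct cyclicity argument in step~(b), using purity of $V|_N$ together with $\sum_i V_iV_i^*=I$, is self-contained. So your argument is a bit longer but relies only on the structure theorem as actually stated in the paper, whereas the paper's shorter proof leans on an unstated feature of the structure projection from \cite{DKP01}.
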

\begin{proof}
Note that since $V$ has no summand that is absolutely continuous,
by Proposition \ref{pro:wold-decomp}  $V$ is necessarily a unitary
$n$-tuple. Let $\mathcal{V}$ denote the free semigroup algebra generated
by $V$, and let $P$ be the projection from Theorem \ref{thm:struct-thm-fsa}
applied to $\mathcal{V}$. Let $H$ be the range of $P$, so that
$H$ is coinvariant under $V$.

Let $K=(H+\sum_{i=1}^{n}V_{i}H)\ominus H$. Then $K$ is wandering
for the compression of $V$ to $H^{\perp}$. If $K=0$, then by Theorem
\ref{thm:struct-thm-fsa}, $\mathcal{V}$ can be decomposed into the
direct sum of a self-adjoint free semigroup algebra and an analytic
free semigroup algebra. By the characterization of singular isometric
tuples, Corollary \ref{thm:sing-self-adjoint}, this would contradict
that $V$ is of dilation type. Thus $K\ne0$. The fact that $K$ is
cyclic follows from the fact that $H$ is cyclic. 
\end{proof}

\begin{example}
[An irreducible isometric tuple of dilation type]For $n\geq2$, define
isometries $V_{1},\ldots,V_{n}$ on $\ell^{2}(\mathbb{N})$ by
\[
V_{k}e_{l}=e_{n(l-1)+k},
\]
where $\{e_{l}\}_{l=1}^{\infty}$ is the standard orthonormal basis
of $\ell^{2}(\mathbb{N})$. Then the range of each $V_{k}$ is spanned
by the orthonormal set
\[
\{V_{k}e_{l}\}_{l=1}^{\infty}=\{e_{n(l-1)+k}\}_{l=1}^{\infty}=\{e_{l}:l\equiv k\ \mbox{mod}\ n\}.
\]
Therefore, the operators $V_{1},\ldots,V_{n}$ are isometries with
mutually orthogonal ranges, meaning $V=(V_{1},\ldots,V_{n})$ is an
isometric tuple. We will show that $V$ is an irreducible isometric
tuple of dilation type.

Since the vector $e_{1}$ is fixed by $V_{1}$, it is straightforward
to check that the sequence $\{V_{1}^{k}\}_{k=1}^{\infty}$ is weakly
convergent to the rank one projection $e_{1}e_{1}^{*}$. In particular,
$e_{1}e_{1}^{*}$ is contained in the von Neumann algebra $\mathrm{W}^{*}(V)$
generated by $V$. (In fact, this is the projection provided by Theorem
\ref{thm:struct-thm-fsa}). Since the vector $e_{1}$ is cyclic for
$V$, it follows that $\mathrm{W}^{*}(V)=B(\ell^{2}(\mathbb{N})$),
and hence that $V$ is irreducible.

To see that $V$ is of dilation type, it suffices to show that $V$
is neither singular nor absolutely continuous. Since $V$ is irreducible
from above, if $V$ was singular then by Theorem \ref{thm:sing-self-adjoint},
the free semigroup algebra $\mathrm{W}(V)$ generated by $V$ would
be $B(\ell^{2}(\mathbb{N})$. However, the vector $e_{2}$ is wandering
for $V$ and the vector $e_{1}$ is orthogonal to the wandering subspace
spanned by $\{V_{w}e_{2}:w\in\mathbb{F}_{n}^{*}\}$, which implies
that $\mathrm{W}(V)$ is not transitive, and hence that $\mathrm{W}(V)$
is properly contained in $B(\ell^{2}(\mathbb{N})$. Thus $V$ is not
singular. The fact that $V$ is not absolutely continuous follows
from Theorem \ref{thm:abs-cont-is-anal} and the observation made
above that the sequence $\{V_{1}^{k}\}_{k=1}^{\infty}$ is weakly
convergent to the projection $e_{1}e_{1}^{*}$.
\end{example}

\begin{example}
[A family of irreducible isometric tuples of dilation type]It was
shown in Corollary 6.6 of \cite{DKS01} that the minimal isometric
dilation of a contractive $n$-tuple $A=(A_{1},\ldots,A_{n})$ acting
on a finite-dimensional space is an irreducible unitary $n$-tuple
if and only if both $\sum_{i=1}^{n}A_{i}A_{i}^{*}=I$ and $\mathrm{C}^{*}(A)$
has a minimal coinvariant subspace that is cyclic for $\mathrm{C}^{*}(A)$.
These conditions are satisfied, for example, by the contractive tuple
$A=(A_{1},A_{2})$, where 
\[
A_{1}=\left(\begin{array}{cc}
0 & 1\\
0 & 0
\end{array}\right),\qquad A_{2}=\left(\begin{array}{cc}
0 & 0\\
1 & 0
\end{array}\right).
\]
Thus the minimal isometric dilation of $A$ is an example of an irreducible
isometric tuple of dilation type. A similar construction can be carried
out for all $n\geq2$.
\end{example}

\begin{thm}
[Lebesgue-von Neumann-Wold Decomposition]\label{thm:lebesgue-wold}Let
$V=(V_{1},\ldots,V_{n})$ be an isometric $n$-tuple. Then $V$ decomposes
 as
\[
V=V_{u}\oplus V_{a}\oplus V_{s}\oplus V_{d},
\]
where $V_{u}$ is a unilateral $n$-shift, $V_{a}$ is an absolutely
continuous unitary $n$-tuple, $V_{s}$ is a singular unitary $n$-tuple,
and $V_{d}$ is a unitary $n$-tuple of dilation type.\end{thm}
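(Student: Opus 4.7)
The plan is to apply the Wold decomposition to split off the unilateral $n$-shift part, and then to decompose the remaining unitary $n$-tuple into its absolutely continuous, singular, and dilation type components using the structure theorem for free semigroup algebras (Theorem~\ref{thm:struct-thm-fsa}) together with Theorems~\ref{thm:abs-cont-is-anal} and~\ref{thm:sing-self-adjoint}.

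First, by Proposition~\ref{pro:wold-decomp}, write $V = V_u \oplus U$ with $V_u$ a unilateral $n$-shift and $U$ unitary; the task reduces to decomposing $U$ as $V_a \oplus V_s \oplus V_d$. Let $K_a$ be the closed linear span of all reducing subspaces of $U$ on which $U$ acts absolutely continuously. Since direct sums of weak-$*$ continuous representations of $\mathcal{L}_n$ are again weak-$*$ continuous, the restriction $U|_{K_a}$ is itself absolutely continuous, so set $V_a = U|_{K_a}$; by maximality of $K_a$, the complementary tuple $U' = U|_{K_a^\perp}$ has no absolutely continuous (equivalently, by Theorem~\ref{thm:abs-cont-is-anal}, no analytic) summand.

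Let $\mathcal{U}'$ be the free semigroup algebra of $U'$, and apply Theorem~\ref{thm:struct-thm-fsa} to obtain a projection $P' \in \mathcal{U}'$ with $P'\mathcal{U}'P'$ analytic, $(I-P')\mathcal{U}'(I-P')$ a von Neumann algebra, and $\mathcal{U}' = P'\mathcal{U}'P' + \mathrm{W}^*(U')(I-P')$. Let $Z'$ denote the central support of $P'$ in $\mathrm{W}^*(U')$; since $Z'$ is central, the subspaces $\operatorname{ran}(Z')$ and $\operatorname{ran}(I-Z')$ are both reducing for $U'$. Because $P'(I-Z') = 0$, the structure identity collapses to $\mathcal{U}'(I-Z') = \mathrm{W}^*(U')(I-Z')$, so the free semigroup algebra of $U'|_{\operatorname{ran}(I-Z')}$ is a von Neumann algebra and this restriction is singular by Theorem~\ref{thm:sing-self-adjoint}. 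Set $V_s = U'|_{\operatorname{ran}(I-Z')}$ and $V_d = U'|_{\operatorname{ran}(Z')}$.

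The remaining task is to confirm that $V_d$ is of dilation type. It is unitary by construction and has no absolutely continuous summand (inherited from $U'$). The key step is to rule out a singular summand in $V_d$: any such summand would correspond to a nonzero central subprojection $Q \leq Z'$ in $\mathrm{W}^*(U')$ with $\mathcal{U}'Q = \mathrm{W}^*(U')Q$, and combining this with the structure identity and the minimality characterizing the central support $Z'$ forces $Q \leq I - Z'$, contradicting $Q \leq Z'$. Hence $V_d$, being a unitary $n$-tuple with neither absolutely continuous nor singular summand, is of dilation type (possibly zero), completing the decomposition. The main obstacle is this last step---leveraging the central support property and the precise form of the structure theorem identity to exclude singular summands within $\operatorname{ran}(Z')$; the other ingredients (Wold, maximality of $K_a$, and the singular part coming from $\operatorname{ran}(I-Z')$) are comparatively direct once Theorems~\ref{thm:abs-cont-is-anal} and~\ref{thm:sing-self-adjoint} are in hand.
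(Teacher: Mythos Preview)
Your approach is more explicit than the paper's: after the Wold decomposition, the paper simply asserts that since absolutely continuous and singular are mutually exclusive (by Theorems~\ref{thm:abs-cont-is-anal} and~\ref{thm:sing-self-adjoint}), one may split $U$ as $V_a \oplus V_s \oplus V_d$, with no further construction. You instead build the pieces, using the structure projection $P'$ and its central support $Z'$ to separate the singular and dilation-type parts of $U'$. This is a reasonable route, and it has the merit of making the decomposition concrete.

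However, the final step contains a gap. A singular summand of $V_d$ corresponds to a nonzero projection $Q$ in the \emph{commutant} $\mathrm{W}^*(U')'$ with $Q \leq Z'$, not to a central projection in $\mathrm{W}^*(U')$; so the minimality of $Z'$ as a central support does not apply to $Q$ in the way you state, nor is the identity $\mathcal{U}'Q = \mathrm{W}^*(U')Q$ justified. The argument can be repaired as follows. Let $N = \operatorname{ran}(Q)$. Since $\operatorname{ran}(P')$ is $U'$-invariant and $U'|_{\operatorname{ran}(P')}$ is analytic (hence absolutely continuous, and this is inherited by restrictions to invariant subspaces), the intersection $\operatorname{ran}(P') \cap N$ is an invariant subspace of $N$ on which $U'$ acts absolutely continuously; singularity of $U'|_N$ forces $\operatorname{ran}(P') \cap N = 0$, i.e.\ $P'Q = QP' = 0$. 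Consequently $\operatorname{ran}(Z') \ominus N$ is a reducing subspace for $U'$ that still contains $\operatorname{ran}(P')$. But the range of the central support of $P'$ is exactly the \emph{smallest} reducing subspace containing $\operatorname{ran}(P')$, so $\operatorname{ran}(Z') \subseteq \operatorname{ran}(Z') \ominus N$ and hence $N = 0$. With this correction your proof goes through.
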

\begin{proof}
The case for $n=1$ follows by the discussion in Section \ref{sec:motivation}.
Thus we can suppose that $n\geq2$. By the Wold decomposition of an
isometric tuple, Proposition \ref{pro:wold-decomp}, we can decompose
$V$ as
\[
V=V_{u}\oplus U,
\]
where $V_{u}$ is a unilateral $n$-shift and $U$ is a unitary $n$-tuple.

By the characterization of an absolutely continuous isometric $n$-tuple
as analytic, Theorem \ref{thm:abs-cont-is-anal}, and the characterization
of a singular isometric $n$-tuple, Corollary \ref{thm:sing-self-adjoint},
an isometric $n$-tuple cannot be both absolutely continuous and singular.
Therefore, we can decompose $U$ as
\[
U=V_{a}\oplus V_{s}\oplus V_{d},
\]
where $V_{a}$ is an absolutely continuous isometric $n$-tuple, $V_{s}$
is a singular isometric $n$-tuple, and $V_{d}$ is of dilation type.
Thus we can further decompose $V$ as
\[
V=V_{u}\oplus V_{a}\oplus V_{s}\oplus V_{d},
\]
as required.
\end{proof}

The next result follows from combining Proposition \ref{pro:struct-dilation-type}
and Theorem \ref{thm:struct-thm-fsa}.
\begin{prop}
\label{lem:struct-alg-dilation-type}Let $V=(V_{1},\ldots,V_{n})$
be an isometric $n$-tuple of dilation type acting on a Hilbert space
$H$. Then there is a projection $P$ and $\alpha\geq1$ and such
that the weakly closed algebra $\mathrm{W}(V)$ generated by $V$
is of the form
\[
\mathrm{W}(V_{1},\ldots,V_{n})=\mathrm{W}^{*}(V)P+P^{\perp}\mathrm{W}(V)P^{\perp},
\]
where $P^{\perp}\mathrm{W}(V_{1},\ldots,V_{n})\mid_{P^{\perp}H}\simeq\mathcal{L}_{n}^{(\alpha)}$.
\end{prop}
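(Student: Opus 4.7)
The plan is to directly combine Theorem~\ref{thm:struct-thm-fsa} with Proposition~\ref{pro:struct-dilation-type}, as the remark preceding the statement suggests. First, I would apply Theorem~\ref{thm:struct-thm-fsa} to $\mathcal{V}:=\mathrm{W}(V_{1},\ldots,V_{n})$ to obtain a projection $Q\in\mathcal{V}$ whose range is invariant under $\mathcal{V}$, for which $\mathcal{V}|_{QH}$ is analytic, the compression of $\mathcal{V}$ to $Q^{\perp}H$ is a von Neumann algebra, and
\[
\mathcal{V}=Q\mathcal{V}Q+\mathrm{W}^{*}(V)Q^{\perp}.
\]
Setting $P:=Q^{\perp}$ rewrites this as $\mathcal{V}=\mathrm{W}^{*}(V)P+P^{\perp}\mathcal{V}P^{\perp}$, which is the decomposition sought. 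Because $P^{\perp}H=QH$ is invariant under $V$, compression and restriction agree there, so $P^{\perp}\mathcal{V}P^{\perp}|_{P^{\perp}H}$ coincides with the weakly closed algebra generated by $V|_{P^{\perp}H}$.

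To identify this restriction concretely with $\mathcal{L}_{n}^{(\alpha)}$, rather than merely with an abstract analytic free semigroup algebra, I would invoke Proposition~\ref{pro:struct-dilation-type}, whose proof is built from exactly the same structure-theorem projection. In our notation $PH=Q^{\perp}H$ is the coinvariant cyclic subspace it produces, and the compression of $V$ to its orthogonal complement $P^{\perp}H$ is a unilateral $n$-shift. Invariance of $P^{\perp}H$ lets me replace that compression by the restriction $V|_{P^{\perp}H}$, which is therefore unitarily equivalent to the ampliation $L^{(\alpha)}$ for some multiplicity $\alpha$. Passing to weakly closed algebras gives $P^{\perp}\mathcal{V}P^{\perp}|_{P^{\perp}H}\simeq\mathcal{L}_{n}^{(\alpha)}$, as claimed.

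It remains to verify $\alpha\geq1$. The multiplicity is the dimension of the wandering subspace $K:=\bigl(PH+\sum_{i=1}^{n}V_{i}PH\bigr)\ominus PH$. If $K=0$, then $PH$ would be invariant under $V$ in addition to being coinvariant, hence reducing, and $V|_{PH}$ would generate a von Neumann algebra; by Theorem~\ref{thm:sing-self-adjoint} it would then be a singular summand of $V$, contradicting that $V$ is of dilation type. There is no genuine obstacle in this argument --- it is essentially bookkeeping between the two cited results, with the only care being the notational swap of $P$ and $P^{\perp}$ relative to the statement of Theorem~\ref{thm:struct-thm-fsa}.
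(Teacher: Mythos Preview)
Your proposal is correct and follows exactly the route the paper indicates: the paper gives no explicit proof, stating only that the result follows from combining Proposition~\ref{pro:struct-dilation-type} and Theorem~\ref{thm:struct-thm-fsa}, and your argument carries out precisely that combination, including the notational swap $P=Q^{\perp}$. Your verification that $\alpha\geq1$ essentially reproduces the argument already inside the proof of Proposition~\ref{pro:struct-dilation-type}, so you could equally well just cite that proposition for this point.
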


The next result follows from the Lebesgue-von Neumann-Wold decomposition
of an isometric tuple, Proposition \ref{pro:wold-decomp}, and Proposition
\ref{lem:struct-alg-dilation-type}.
\begin{thm}
Let $V=(V_{1},\ldots,V_{n})$ be an isometric $n$-tuple acting on
a Hilbert space $H$, and let $V=V_{u}\oplus V_{a}\oplus V_{s}\oplus V_{d}$
be the Lebesgue-von Neumann-Wold decomposition of $V$ as in Theorem
\ref{thm:lebesgue-wold}. Then there is a projection $P$ and $\alpha,\beta\geq0$
such that the weakly closed algebra $\mathrm{W}(V)$ generated by
$V$ is
\[
\mathrm{W}(V)\simeq(\mathcal{L}_{n}(V_{u}\oplus V_{a}))^{(\alpha)}\oplus\mathrm{W}^{*}(V_{s})\oplus\left(\mathrm{W}^{*}(V_{d})P+P^{\perp}\mathrm{W}(V_{d})P^{\perp}\right),
\]
where $P^{\perp}\mathrm{W}(V_{1},\ldots,V_{n})\mid_{P^{\perp}H}\simeq\mathcal{L}_{n}^{(\beta)}$.
The von Neumann algebra $\mathrm{W}^{*}(V_{1},\ldots,V_{n})$ generated
by $V$ is

\[
\mathrm{\mathrm{W}}^{*}(V)\simeq(B(\ell^{2}))^{(\alpha)}\oplus\mathrm{W}^{*}(V_{s})\oplus\mathrm{W}^{*}(V_{d}).
\]
\end{thm}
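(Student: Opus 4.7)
The plan is to reduce to the case $n \geq 2$ (the classical case $n = 1$ follows from the discussion in Section~\ref{sec:motivation}) and then to combine the Lebesgue-von Neumann-Wold decomposition $V = V_u \oplus V_a \oplus V_s \oplus V_d$ from Theorem~\ref{thm:lebesgue-wold} with the structural results already established for each type of summand. The decomposition of $V$ must be shown to be reflected inside $\mathrm{W}(V)$: namely, the orthogonal projections onto the three distinguished pieces $V_u \oplus V_a$, $V_s$, and $V_d$ must lie in $\mathrm{W}(V)$, so that both $\mathrm{W}(V)$ and $\mathrm{W}^{*}(V)$ split as direct sums along this decomposition.

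Once this splitting is in place, each summand is handled in turn. The tuple $V_u \oplus V_a$ is absolutely continuous, hence analytic by Theorem~\ref{thm:abs-cont-is-anal}, and so after accounting for multiplicity its free semigroup algebra is isomorphic to $\mathcal{L}_n^{(\alpha)}$ for some $\alpha \geq 0$. For the associated von Neumann algebra, the standard observation that $\mathrm{C}^{*}(L_1, \ldots, L_n)$ contains the rank-one projection $I - \sum_i L_i L_i^{*}$ onto the cyclic vacuum vector yields $\mathrm{W}^{*}(L_1, \ldots, L_n) = B(F_n^2)$, and so $\mathrm{W}^{*}(V_u \oplus V_a) \simeq B(\ell^2)^{(\alpha)}$. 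The singular summand satisfies $\mathrm{W}(V_s) = \mathrm{W}^{*}(V_s)$ by Theorem~\ref{thm:sing-self-adjoint}. The dilation-type summand is handled directly by Proposition~\ref{lem:struct-alg-dilation-type}, producing the stated mixed form with some multiplicity $\beta \geq 0$.

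The main obstacle is precisely the initial splitting step: exhibiting the three distinguished projections inside $\mathrm{W}(V)$ rather than merely inside $\mathrm{W}^{*}(V)$. I would approach this by first applying Theorem~\ref{thm:struct-thm-fsa} to $V$ to produce a projection in $\mathrm{W}(V)$ separating the analytic part of $\mathrm{W}(V)$ from its von Neumann part; Theorem~\ref{thm:sing-self-adjoint} then identifies the singular summand $V_s$ as a direct summand of the von Neumann part, and the complementary piece there is forced to come from the von Neumann part of $V_d$. Combining this with the mixed form of Proposition~\ref{lem:struct-alg-dilation-type}, which places the separating projection for $V_d$ itself inside $\mathrm{W}(V_d)$, one exhibits all three desired central projections inside $\mathrm{W}(V)$. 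Once these projections are in hand, assembling the stated isomorphisms from the summand-wise descriptions above is routine bookkeeping.
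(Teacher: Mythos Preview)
Your proposal is correct and follows essentially the same route as the paper. The paper's own proof is a one-line pointer: the result ``follows from the Lebesgue--von Neumann--Wold decomposition of an isometric tuple, Proposition~\ref{pro:wold-decomp}, and Proposition~\ref{lem:struct-alg-dilation-type}.'' Your sketch simply spells out what that sentence means in practice---in particular, you are more explicit than the paper about why the decomposition into $V_u\oplus V_a$, $V_s$, and $V_d$ is visible at the level of $\mathrm{W}(V)$ (via the structure projection of Theorem~\ref{thm:struct-thm-fsa}) rather than merely at the level of the commutant, and you correctly invoke Theorems~\ref{thm:abs-cont-is-anal} and~\ref{thm:sing-self-adjoint} for the individual summands.
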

\begin{acknowledgement*}
The author is grateful to his advisor, Ken Davidson, for his support
and encouragement.\end{acknowledgement*}


\begin{thebibliography}{References}
\bibitem[Arv69]{Arv69}W.B. Arveson, Subalgebras of $\mathrm{C}^{*}$-algebras,
Acta Mathematica 123 (1969), 141--224.

\bibitem[Arv75]{Arv75}W.B. Arveson, Interpolation problems in nest
algebras, Journal of Functional Analysis 20 (1975), No. 3, 208--233.

\bibitem[Ber88]{Ber88}H. Bercovici, Factorization theorems and the
structure of operators on Hilbert space, Annals of Mathematics 128
(1988), No. 2, 399--413.

\bibitem[Ber98]{Ber98}H. Bercovici, Hyper-reflexivity and the factorization
of linear functionals, Journal of Functional Analysis 158 (1998),
No. 1, 242--252.

\bibitem[BFP85]{BFP85}H. Bercovici, C. Foias, C. Pearcy, Dual algebras
with applications to invariant subspaces and dilation theory, CBMS
Regional Conference Series in Mathematics 56 (1985), American Mathematical
Society, Providence.

\bibitem[Bro78]{Bro78}S.W. Brown, Some invariant subspaces for subnormal
operators, Integral Equations and Operator Theory 1 (1978), No. 3,
310--333.

\bibitem[Bun84]{Bun84}J. Bunce, Models for $n$-tuples of non-commuting
operators, Journal of Functional Analysis 57 (1984), No. 1, 21--30.

\bibitem[Cho74]{Cho74}M.D. Choi, A Schwarz inequality for positive
linear maps on $\mathrm{C}^{*}$-algebras, Illinois Journal of Mathematics
18 (1974), No. 4, 565--574.

\bibitem[Con00]{Con00}J.B. Conway, A course in operator theory, Graduate
Studies in Mathematics 21 (2000), American Mathematical Society, Providence.

\bibitem[Dav87]{Dav87}K.R. Davidson,\emph{ }The distance to the analytic
Toeplitz operators, Illinois Journal of Mathematics 31 (1987), No.
2, 265--273.

\bibitem[Dav01]{Dav01}K.R. Davidson, Free semigroup algebras: a survey,
Operator Theory: Advances and Applications 129 (2000), Birkhauser,
Bordeaux.

\bibitem[Dav06]{Dav06}K.R. Davidson, $\mathcal{B}(H)$ is a free
semigroup algebra, Proceedings of the American Mathematical Society
134 (2006), No. 2, 1753--1757.

\bibitem[DKS01]{DKS01}K.R. Davidson, D.W. Kribs, M.E. Shpigel, Isometric
dilations of non-commuting finite rank $n$-tuples, Canadian Journal
of Mathematics 53 (2001), 506--545.

\bibitem[DKP01]{DKP01}K.R. Davidson, E. Katsoulis, D.R. Pitts, The
structure of free semigroup algebras, Journal für die reine und angewandte
Mathematik 533 (2001), 99--125.

\bibitem[DLP05]{DLP05}K.R. Davidson, J. Li, D.R. Pitts, Absolutely
continuous representations and a Kaplansky density theorem for free
semigroup algebras, Journal of Functional Analysis 224 (2005), No.
1, 160--191.

\bibitem[DP98]{DP98}K.R. Davidson, D.R. Pitts, The algebraic structure
of noncommutative analytic Toeplitz algebras, Mathematische Annalen
311 (1998), 275--303.

\bibitem[DP99]{DP99}K.R. Davidson, D.R. Pitts, Invariant subspaces
and hyper-reflexivity for free semigroup algebras, Proceedings of
the London Mathematical Society 78 (1999), No. 2, 401--430.

\bibitem[DY08]{DY08}K.R. Davidson, D. Yang, A note on absolute continuity
in free semigroup algebras, Houston Journal of Mathematics 34 (2008),
283--288.

\bibitem[Fra82]{Fra82}A. Frahzo, Models for non-commuting operators,
Journal of Functional Analysis 48 (1982), No. 1, 1--11.

\bibitem[Ken11]{Ken11} M. Kennedy, Wandering vectors and the reflexivity
of free semigroup algebras, Journal für die reine und angewandte Mathematik
653 (2011), 47--73.

\bibitem[Kri01]{Kri01}D.W. Kribs, Factoring in non-commutative analytic
Toeplitz algebras, Journal of Operator Theory 45 (2001), No. 1, 175--193.

\bibitem[LM78]{LM78}R.I. Loebl, P.S. Muhly, Analyticity and flows
in von Neumann algebras, Journal of Functional Analysis 29 (1978),
No. 2, 214--252.

\bibitem[LS75]{LS75}A.N. Loginov and V.S. Shulman, Hereditary and
intermediate reflexivity of $\mathrm{W}^{*}$-algebras, Izvestiya
Rossiiskoi Akademii Nauk Seriya Matematicheskaya 39 (1975), No. 6,
1260--1273.

\bibitem[MS10]{MS10}P. Muhly, B. Solel, Representations of Hardy
algebras: absolute continuity, intertwiners and superharmonic operators,
preprint (2010), arXiv:1006.1398.

\bibitem[Pop89a]{Pop89a}G. Popescu, Isometric dilations for infinite
sequences of noncommuting operators, Transactions of the American
Mathematical Society 316 (1989), No. 2, 523--536.

\bibitem[Pop89b]{Pop89b}G. Popescu, Multi-analytic operators and
some factorization theorems, Indiana University Mathematics Journal
38 (1989), No. 3, 693--710.

\bibitem[Pop09]{Pop09}G. Popescu,\emph{ }Noncommutative transforms
and free pluriharmonic functions, Advances in Mathematics 220 (2009),
No. 3, 831--893.

\bibitem[Pop91]{Pop91}G. Popescu, von Neumann inequality for $(B(H)^{n})_{1}$,
Mathematica Scandinavica 68 (1991), No. 2, 292--304.

\bibitem[Pop96]{Pop96}G. Popescu, Non-commutative disc algebras and
their representations\emph{,} Proceedings of the American Mathematical
Society 124 (1996), No. 7, 2137--2148.

\bibitem[Read05]{Read05}C.J. Read, A large weak operator closure
for the algebra generated by two isometries, Journal of Operator Theory
54 (2005), No. 2, 305--316.

\bibitem[SF70]{SF70}B. Sz-Nagy, C. Foias, Harmonic analysis of operators
on Hilbert space, Universitext (1970), Springer, North Holland.

\bibitem[Wer52]{Wer52}J. Wermer, On invariant subspaces of normal
operators, Proceedings of the American Mathematical Society 3 (1952),
270--277.\end{thebibliography}
\end{document}